\DeclareFontFamily{OT1}{rsfs}{}
\DeclareFontShape{OT1}{rsfs}{n}{it}{<-> rsfs10}{}
\DeclareMathAlphabet{\mathscr}{OT1}{rsfs}{n}{it}
\newtheorem{theorem}{Theorem}[section]
\newtheorem{prop}[theorem]{Proposition}
\newtheorem{corollary}[theorem]{Corollary}
\newtheorem{lemma}[theorem]{Lemma}
\newtheorem*{conj}{Conjecture}
\theoremstyle{remark}
\newtheorem*{remark}{Remark}
\renewcommand{\mod}[1]{{\ifmmode\text{\rm\ (mod~$#1$)}\else\discretionary{}{}{\hbox{ }}\rm(mod~$#1$)\fi}}
\newcommand{\R}{{\mathbb R}}
\newcommand{\Lo}{\mathscr{L}}
\newcommand{\chiqq}{ \sideset{}{^*}\sum_{\psi \mod{q} }}
\newcommand{\e}{\varepsilon}
\newcommand{\pl}{\text{Li}}
\begin{document}

\title[Discrete mean value]
 {A discrete mean value  of the Riemann zeta function} 

 \thanks{Nathan Ng was supported by the NSERC discovery grant  RGPIN-2020-06032. K\"{u}bra Benli was supported by a Pacific Institute for the Mathematical Sciences (PIMS) postdoctoral fellowship at the University of Lethbridge and also by the joint FWF-ANR project Arithrand: FWF: I 4945-N and ANR-20-CE91-0006.
 Ertan Elma was supported by a University of Lethbridge postdoctoral fellowship. 
 This research was also funded by the PIMS Collaborative Research Group {\it $L$-functions in Analytic Number Theory}.}

\date{\today}

\keywords{\noindent Dirichlet polynomials, mean value problems, zeros of the Riemann zeta function}

\subjclass[2010]{ Primary 11M06, 11M26; Secondary 11N37}

\author[K\"{u}bra Benl\.{i}]{K\"{u}bra Benl\.{i}}
\address{University of Lethbridge \\ Department of Mathematics and Computer Science \\ 4401 University Drive \\ Lethbridge, AB, Canada \ T1K 3M4 }
\email{kubra.benli@uleth.ca}

\author[Ertan Elma]{Ertan Elma}
\address{University of Lethbridge \\ Department of Mathematics and Computer Science \\ 4401 University Drive \\ Lethbridge, AB, Canada \ T1K 3M4 }
\email{ertan.elma@uleth.ca}

\author[Nathan Ng]{Nathan Ng}
\address{University of Lethbridge \\ Department of Mathematics and Computer Science \\ 4401 University Drive \\ Lethbridge, AB, Canada \ T1K 3M4 }
\email{nathan.ng@uleth.ca}

\begin{abstract}
In this work, we estimate the sum 
\begin{align*}
    \sum_{0 < \Im(\rho) \leq T} \zeta(\rho+\alpha)X(\rho) Y(1\!-\! \rho)
\end{align*}
 over the nontirival zeros $\rho$ of the Riemann zeta funtion where $\alpha$ is a complex number with $\alpha\ll 1/\log T$ and $X(\cdot)$ and $Y(\cdot)$ are some Dirichlet polynomials.   
Moreover, we estimate the discrete mean value above for higher derivatives where $\zeta(\rho+\alpha)$ is replaced by $\zeta^{(m)}(\rho)$ for all $m\in\mathbb{N}$.  The formulae we obtain generalize a number of previous results in the literature. As an application, assuming the Riemann Hypothesis we obtain the lower bound
\begin{align*}
		 \sum_{0 < \Im(\rho) < T} | \zeta^{(m)}(\rho)|^{2k} \gg T(\log T)^{k^2+2km+1} \quad \quad (k,m\in\mathbb{N})
	\end{align*}
 which was previously known under the Generalized Riemann Hypothesis, in the case $m=1$.
\end{abstract}

\maketitle


\newcommand{\kubra}[1]{{\color{blue} \sf  Kubra: #1}}
\newcommand{\nathan}[1]{{\color{purple} \sf  Nathan: #1}}
\newcommand{\ertan}[1]{{\color{red} \sf  Ertan: #1}}

\section{Introduction}

In this work, we study a discrete mean value of the Riemann zeta function. 
There has been extensive research on these types of sums as they have many interesting number theoretic applications  including to the proportion of simple zeros of the 
zeta function (see \cite{CGG}, \cite{BHB}), large and small gaps between the zeros of the zeta function
(see \cite{CGG2}, \cite{BMN2}, \cite{CGG3}, \cite{Ng}), and the distribution of the  sum of the M\"{o}bius function and related functions (see  \cite{Hum}, \cite{Me},  \cite{Ng2}). 
Our main objects of study are the sums
\begin{equation}
  \label{mainsum}
  S(\alpha,T,X,Y):=\sum_{0 < \gamma \leq T} \zeta(\rho+\alpha)X(\rho) Y(1\!-\! \rho)
\end{equation}  
\begin{equation}
  \label{SmTXY}
   S_m(T,X,Y):=   \sum_{0 < \gamma \leq T} \zeta^{(m)}(\rho)X(\rho) Y(1\!-\! \rho)
\end{equation}
where the factors $X(\cdot)$ and $Y(\cdot)$ are Dirichlet polynomials of length $N\geqslant 1$ defined by 
\begin{equation}
  X(s) = \sum_{n \le N} \frac{x(n)}{n^s} \text{ and }
  Y(s) = \sum_{n \le N} \frac{y(n)}{n^s}, 
\end{equation}
\begin{equation}\label{N vs T}
	N \ll T^{\vartheta} \text{ for some } \vartheta < \frac{1}{2}
\end{equation}
and where $\rho=\beta+i \gamma,\, 0<\sigma<1,\, \gamma\in\mathbb{R}$ ranges through the nontrivial zeros of the Riemann zeta function  $\zeta(s)$. 
Here $T \ge 2$ is a real parameter and $\alpha\in\mathbb{C}$ satisfies
\begin{align}\label{alpha size condition}
	\left|\alpha\right| \leqslant \frac{1}{15\log T}.
\end{align}

We state our main results in two forms: unconditional and conditional on a version of the Generalized Riemann Hypothesis (GRH) for Dirichlet $L$-functions in the following sense.

\begin{conj}[Conjecture $\text{GRH}(\Theta)$]  \label{GRHtheta}
There exists $\Theta  \in [\frac{1}{2},1)$ such that for all $q\geqslant 1$ and for all Dirichlet characters $\chi$ modulo $q$, the Dirichlet $L$-functions $L(s,\chi)$ have no zeros in the region $\sigma>\Theta$.
\end{conj}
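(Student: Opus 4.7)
The statement above is labelled Conjecture GRH$(\Theta)$: with $\Theta=1/2$ it is precisely the Generalized Riemann Hypothesis for Dirichlet $L$-functions, and for any fixed $\Theta<1$ it asserts a uniform zero-free half-plane across the entire family. Both forms are famously open, so my ``proof proposal'' is necessarily a description of what is currently achievable, where the standard strategies fall short, and how the hypothesis is used downstream in the paper, rather than a genuine plan of attack.

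The natural approach would be to establish a uniform zero-free region of width independent of $q$ and $|t|$. I would first recall the classical de la Vall\'ee Poussin argument, which via the nonnegativity of $3+4\cos\theta+\cos 2\theta$ together with polynomial growth estimates for $L(s,\chi)$ produces a region of the shape $\sigma>1-c/\log(q(|t|+2))$, modulo the possibility of a Siegel exceptional real zero. I would then try to upgrade this via the Vinogradov--Korobov method to $\sigma>1-c/(\log(q|t|+3))^{2/3}(\log\log(q|t|+3))^{1/3}$. Both regions, however, collapse to the line $\sigma=1$ as $q$ or $|t|\to\infty$, so neither yields any fixed $\Theta<1$. A complementary route would be log-free zero-density estimates in the style of Linnik, Jutila, and Heath-Brown, but these only bound the \emph{number} of zeros with $\Re(s)>\Theta$ rather than exclude them, so they cannot establish the conjecture for any single $L(s,\chi)$.

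The main obstacle is fundamental: every known technique for ruling out zeros of $L(s,\chi)$ in a fixed strip $\sigma>\Theta$ ultimately relies on controlling $L(s,\chi)$ itself there, and the best available bounds in that region degenerate uniformly as $q|t|$ grows. Breaking this barrier would amount to a genuine advance toward GRH, far beyond current methods. For the purposes of the present paper, then, Conjecture GRH$(\Theta)$ is to be \emph{assumed} rather than proved, and it enters as a hypothesis in the strongest conditional forms of the main theorems, such as the $2k$-th power lower bound for $\zeta^{(m)}(\rho)$ mentioned in the abstract.
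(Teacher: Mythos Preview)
Your assessment is correct: the statement is explicitly labelled a \emph{Conjecture} in the paper, and the paper makes no attempt whatsoever to prove it. There is no ``paper's own proof'' to compare against; GRH$(\Theta)$ functions purely as a standing hypothesis invoked in the conditional parts of Theorem~\ref{mainthm}(ii), Theorem~\ref{Mthm}, and Theorem~\ref{EGRH} to obtain power-saving error terms of the shape $T^{\Theta+\varepsilon}$ in place of the unconditional $T(\log T)^{-A}$.

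One small correction to your final sentence: the $2k$-th moment lower bound in Corollary~\ref{corollary_lower_bound} is proved in the paper under the ordinary Riemann Hypothesis alone, not under GRH$(\Theta)$; indeed, one of the stated contributions of the paper is precisely to weaken the GRH assumption of Milinovich--Ng to RH for that application. The GRH$(\Theta)$ hypothesis is used only for the sharper error bounds in the general discrete mean value formula, not for the moment corollary.
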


We assume that $x(n)$ and $y(n)$ are supported on natural numbers $n\leqslant N$, that is, 
\begin{equation}
 \label{support}
x(n)=y(n)=0 \text{ for all } n>N
\end{equation}
and we assume 
a submultiplicativity condition that
\begin{equation}	\label{submultiplicative}
	x(mn) \ll |x(m)x(n)| \text{ and } y(mn) \ll |y(m)y(n)|
\end{equation}
for all  natural numbers $m$ and $n$.

Furthermore, for our unconditional results, we assume that 
 there exist
$k_1, k_2, \ell_1, \ell_2 \ge 1$ such that 
\begin{equation}	\label{xyinftyunc}
x(n) \ll \tau_{k_1}(n)(\log n)^{\ell_1} \text{ and } y(n) \ll  \tau_{k_2}(n)(\log n)^{\ell_2}
\end{equation}
where $\tau_k(\cdot)$ is the $k$-fold divisor function.

\begin{theorem} \label{mainthm}
Let $N \ge 1$ and $x,y$ be complex sequences satisfying the support condition \eqref{support} and the submultiplicativity condition \eqref{submultiplicative}. Let $\alpha \in \mathbb{C}$ satisfying \eqref{alpha size condition}. Let $s_{\alpha}(n) := n^{\alpha}$. Then we have
		\begin{align} \label{dmformula}
		\nonumber S(\alpha,T,X,Y) 
		&=\frac{T}{2\pi}\log\left(\frac{T}{2\pi e}\right) \sum_{n\leqslant N}\frac{\left(s_{-\alpha}\ast x\right) (n)y(n)}{n}
		-\frac{T}{2\pi}\sum_{n\leqslant N}\frac{\left(\Lambda\ast s_{-\alpha}\ast x\right) (n)y(n)}{n}
		\\
		& \quad \quad + \sum_{g\leqslant N}\sum_{\substack{h,k\leqslant N/g\\(h,k)=1}}\frac{y(gh)x(gk)}{gkh}
		\mathcal{F}_{\alpha,h,k}(T)  +\tilde{\mathcal{E}}
	\end{align}
 where $\mathcal{F}_{\alpha,h,k}(T)$ is defined in \eqref{definition_mathcal_F} below and where  we have the following bounds for the error term $\tilde{\mathcal{E}}$:
\begin{enumerate}[label=(\roman*)]
	\item If $x,y$ satisfy the bounds in \eqref{xyinftyunc}, then unconditionally, for any $A >0$
	\begin{align*}
		\tilde{\mathcal{E}} &\ll  
		T (\log T)^{-A}
	\end{align*}
for $T$ sufficiently large. 
\item Assuming the conjecture GRH($\Theta$), for every $\e >0$, we have
 \begin{align*}
    \tilde{\mathcal{E}}&\ll T^{\Theta+\varepsilon}  \left(\left\|\frac{y(n)}{n^\Theta}\right\|_1\left\|n^{1/2}x(n)(1\ast\left| y\right|)(n)\right\|_1 + \left\|\frac{x(n)y(n)}{n} \right\|_1 \left\|\frac{y(n)}{n^{\Theta}} \right\|_1 \left\|\frac{x(n)}{n} \right\|_1\left\|\frac{x(n)}{n^{2-\Theta}} \right\|_1 \right)
    \\&\quad \quad +T^{\frac{1}{2}+\varepsilon} \left(\left\| x \right\|_{1}\left\| \frac{y(n)}{n} \right\|_{1}+\left\| y \right\|_{1}\left\| \frac{x(n)}{n} \right\|_{1}\right)
\end{align*}
for $T$ sufficiently large. In particular, if $x(n)\ll n^{\delta_1}$ and $y(n)\ll n^{\delta_2}$  for some $\delta_1,\delta_2\geqslant 0$, then $\tilde{\mathcal{E}}\ll T^{\Theta+\varepsilon}N^{\frac{5}{2}-\Theta+\delta_1+2\delta_2}$.
\vspace{0.1cm}
 
\end{enumerate}
\end{theorem}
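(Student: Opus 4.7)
The plan is to express $S(\alpha,T,X,Y)$ as a contour integral via the residue theorem, to evaluate the right vertical side by direct Dirichlet-series expansion and the left vertical side via the functional equation, and to absorb everything else into the error term $\tilde{\mathcal{E}}$. Concretely, set
$$G(s) := \frac{\zeta'(s)}{\zeta(s)}\zeta(s+\alpha)X(s)Y(1-s),$$
and integrate $G$ around the positively oriented rectangle $\mathcal{C}$ with vertices $c+i,\,c+iT,\,1-c+iT,\,1-c+i$, where $c = 1+1/\log T$ and $T$ is adjusted by $O(1)$ so that $\Im(s)=T$ stays bounded away from ordinates of zeros. Inside $\mathcal{C}$ the function $G$ has the nontrivial zeros of $\zeta$ as simple poles (producing $\zeta(\rho+\alpha)X(\rho)Y(1-\rho)$), a pole of $\zeta'/\zeta$ at $s=1$ with residue $-\zeta(1+\alpha)X(1)Y(0)$, and a pole of $\zeta(s+\alpha)$ at $s=1-\alpha$ with residue $\frac{\zeta'(1-\alpha)}{\zeta(1-\alpha)}X(1-\alpha)Y(\alpha)$ (merging into a double pole when $\alpha=0$, to be read off from the Laurent expansion). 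By \eqref{alpha size condition} each polar residue is $O(\log T)$, so all are absorbed into $\tilde{\mathcal{E}}$.

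On $\Re(s)=c>1$ the series $-\zeta'(s)/\zeta(s)=\sum_n\Lambda(n)n^{-s}$, $\zeta(s+\alpha)=\sum_n s_{-\alpha}(n)n^{-s}$, $X(s)=\sum_n x(n)n^{-s}$, and $Y(1-s)=\sum_h y(h)h^{s-1}$ all converge absolutely; I expand, interchange with the integral, and separate the diagonal pairs (where the $n$-argument cancels $h$) from the off-diagonal pairs. The diagonal contribution produces the second main term $-\frac{T}{2\pi}\sum_{n\leq N}\frac{(\Lambda\ast s_{-\alpha}\ast x)(n)y(n)}{n}$, while off-diagonal pairs give oscillatory integrals of size $\ll 1/|\log(h/n)|$ controlled by standard estimates. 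On the line $\Re(s)=1-c$ I apply the functional equation $\zeta(s)=\chi(s)\zeta(1-s)$ together with $\zeta'(s)/\zeta(s)=\chi'(s)/\chi(s)-\zeta'(1-s)/\zeta(1-s)$, and change variables $s\mapsto 1-s$ to return to a line of absolute convergence. The $\chi'/\chi$ piece, via the asymptotic $\chi'(s)/\chi(s)=-\log(|t|/2\pi)+O(|t|^{-1})$, yields the first main term $\frac{T}{2\pi}\log(T/2\pi e)\sum_n\frac{(s_{-\alpha}\ast x)(n)y(n)}{n}$ from its diagonal, while the $-\zeta'(1-s)/\zeta(1-s)$ piece, after expanding and regrouping by $n=gh,\ m=gk$ with $(h,k)=1$, produces exactly $\sum_{g\leq N}\sum_{(h,k)=1}\frac{y(gh)x(gk)}{gkh}\mathcal{F}_{\alpha,h,k}(T)$ in the notation of \eqref{definition_mathcal_F}.

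It remains to verify that the horizontal segments and the off-diagonal leftovers from both vertical sides fit inside the stated $\tilde{\mathcal{E}}$. The horizontals are controlled by the classical bound $|\zeta'(\sigma+iT)/\zeta(\sigma+iT)|\ll(\log T)^2$ (sharpened to $\ll\log T$ under GRH$(\Theta)$ after shifting into $\sigma>\Theta$), together with convexity estimates for $\zeta(s+\alpha)$ and the divisor-type hypotheses \eqref{xyinftyunc} on the Dirichlet polynomials. For part (i) the unconditional saving $T(\log T)^{-A}$ follows from combining $N\ll T^\vartheta$ with $\vartheta<1/2$, moment bounds for divisor sums, and the oscillation in the off-diagonals. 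For part (ii) I would push the left contour all the way to $\Re(s)=\Theta+\varepsilon$, crossing only residues already tabulated, and bound the resulting integral by extracting the Dirichlet-polynomial coefficients into the $\ell^1$-norms $\|x(n)/n^\sigma\|_1,\,\|y(n)/n^\sigma\|_1$ with $\sigma\in\{\Theta,1,2-\Theta\}$ appearing on the right-hand side, using the Lindel\"of-type bound $\zeta(\sigma+it)\ll|t|^\varepsilon$ valid for $\sigma>\Theta$ under GRH$(\Theta)$.

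The technical heart of the argument is the left vertical analysis: extracting the main term in the clean form $\mathcal{F}_{\alpha,h,k}(T)$ from the $\chi$-twisted Dirichlet sums while simultaneously bounding the residue of that computation by exactly the norm combinations stated in (ii), and not anything cruder. Part (i) trades the absence of a zero-free strip for the divisor bounds \eqref{xyinftyunc} and relies on pointwise-to-mean-square trade-offs for $\zeta'/\zeta$ on vertical lines; part (ii) requires careful tracking of how the $\chi$ factor and each $|t|^\varepsilon$-type bound interact with the coefficient norms through the shifted contour. Managing both simultaneously, in a single estimation scheme flexible enough to degrade gracefully into each regime, is the principal obstacle.
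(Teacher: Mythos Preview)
Your high-level architecture (residue theorem, split into vertical sides, mean-value theorems on the line of absolute convergence) matches the paper, and your assignment of the first two main terms is essentially correct up to the cosmetic choice of using $\zeta'/\zeta(s)$ rather than $\zeta'/\zeta(1-s)$ as the pole-generating factor. The genuine gap is in your treatment of the hard vertical side, which you summarize as ``expanding and regrouping by $n=gh$, $m=gk$ with $(h,k)=1$'': this is not how $\mathcal{F}_{\alpha,h,k}(T)$ actually emerges, and the missing steps are precisely where both the main term and the error bounds come from.

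Concretely, after the functional equation and the change of variable, the left side becomes a sum of integrals $\int \chi(1-w)(m/k)^{-w}\,dw$, which must be evaluated by stationary phase to produce the additive characters $e(-m/k)$ (the paper's Lemma~\ref{CGG_type_lemma}). One then converts $e(-m/k)$ to multiplicative characters via the identity $e(-m/k)=\mu(k')/\phi(k')+\sum_{\psi}\tau(\bar\psi)\psi(-m')\cdots$ summed over primitive Dirichlet characters $\psi$. The $\mu(k')/\phi(k')$ piece is what yields $\mathcal{F}_{\alpha,h,k}(T)$, but only after applying Perron's formula and computing residues of $\zeta(s+\bar\alpha+\gamma)\zeta(s)/\zeta(s+\bar\alpha)$ times an Euler product $G_1$ (Section~\ref{sectionM}); your ``regrouping'' does not capture this. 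The character-sum piece is the error $\mathcal{E}$, and bounding it is the crux: unconditionally one needs Heath--Brown's combinatorial decomposition of $\mu$ together with the large sieve over moduli $q$ (splitting small $q\le\eta$ from large $q>\eta$), none of which appears in your sketch; under GRH($\Theta$) one needs the hypothesis for the \emph{Dirichlet $L$-functions} $L(s,\psi)$ arising from the character decomposition, not merely for $\zeta$. Your proposal to ``push the left contour to $\Re(s)=\Theta+\varepsilon$'' cannot work as stated, since doing so would sweep across the zeros $\rho$ whose residues you have already extracted, and in any case the norm structure in part~(ii) reflects the $\psi$-twisted Perron analysis rather than a direct contour shift of the original integrand. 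Finally, the paper also uses the device $\zeta'/\zeta(w)=\tfrac{d}{d\gamma}\zeta(w+\gamma)/\zeta(w)\big|_{\gamma=0}$ to replace $\Lambda$ by the completely multiplicative $n\mapsto n^{-\gamma}$, which is what makes the residue computation tractable; without it the combinatorics of the main term are substantially harder.
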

In Theorem \ref{mainthm}, we use the notation $\mathcal{F}_{\alpha,h,k}(T)$ for the function defined by  
 \begin{align}\label{definition_mathcal_F}
\mathcal{F}_{\alpha,h,k}(T):=\frac{T}{2\pi}\left(  	\frac{\mathbbm{1}_{k=1}}{h^{\alpha}}\frac{\zeta'}{\zeta}(1+\alpha)-\frac{\Lambda(k)}{h^{\alpha}\Phi(1+\alpha,k)}	
		-\frac{k}{\varphi(k)}\Phi(\alpha,k)\zeta(1-\alpha)\frac{\left(\frac{T}{2\pi k}\right)^{-\alpha}}{1-\alpha}\right)
\end{align}
where $\mathbbm{1}_{k=1}$ is the indicator function defined  by $\mathbbm{1}_{k=1}=1$ if $k=1$ and $\mathbbm{1}_{k=1}=0$ for $k>1$, $\Lambda(k)$ is the von Mangoldt function and $\varphi(k)$ is the Euler totient function and  for natural numbers $h$ and $k$, 
\begin{equation}
	\label{Phisk}
	\Phi(s,k):=\prod_{p\mid k}\left(1-p^{-s}\right), \quad  (s\in \mathbb{C}).
\end{equation}

\begin{remark}
    In Theorem \ref{mainthm}, in the conditional case, in order to establish an asymptotic formula for general Dirichlet coefficients, we must have $N<T^{\vartheta}$ with $\vartheta<\frac{1-\Theta}{5/2-\Theta+\delta_1+2\delta_2}$. When working with specific coefficients, the proof can be modified to work with larger $N$ and obtain better bounds for the error term. 
\end{remark}

 In \cite{Ng}, the third author  evaluated $S_1(T,X,Y)$ following the argument in \cite{CGG}.  Unfortunately, shortly  after the publication of \cite{Ng}, he spotted a mistake in the calculation of the error term in  \cite{CGG}  and the same mistake is carried in the calculation of the error term in \cite{Ng}. To be more precise, there   was  an improper application of the large sieve 
inequality in \cite{CGG}.  In \cite{BHB},  Bui and Heath-Brown corrected the argument in \cite{CGG} and even improved the claimed bounds there.  There were further minor errors in the main term of \cite[Theorem 1.3]{Ng}. These have been corrected in Theorem \ref{mainthm} in this work and the authors have now verified that the formula here agrees with a corrected version of the main term in \cite[Theorem 1.3]{Ng}.

In \cite{HLZ}, Heap, Li, and Zhao  estimated
\begin{equation}
  \label{2ndmoment}
   \sum_{0 < \gamma < T} \zeta(\rho+\alpha) \zeta(1-\rho+\beta) X(\rho) X(1-\rho) 
\end{equation}
by using the methods from \cite{BHB} by Bui and Heath-Brown. 
Note that $ \mathcal{F}_{\alpha,h,k}(T)$ also has the form 
\begin{equation}
  \label{F2ndformula}
  \mathcal{F}_{\alpha,h,k}(T) =  \overline{\frac{d}{d \gamma}  \frac{1}{2\pi}\int_{0}^{T}\left[Z_{\overline{\alpha},\gamma,h,k}+\left(\frac{t}{2\pi}\right)^{-\overline{\alpha}-\gamma}Z_{-\gamma,-\overline{\alpha},h,k}\right]\,dt \Bigr|_{\gamma=0}}
\end{equation}
where 
\begin{align}\label{definition of Z}
 Z_{\alpha,\gamma,h,k}:&=\frac{1}{h^{\alpha}k^{\gamma} }
\frac{\zeta(1+\alpha+\gamma)}{\zeta(1+\alpha)}
 \prod_{p \mid k}  
  \frac{    1-p^{\gamma} }{ 1 - p^{-1-\alpha}}
\end{align}
which matches with the expression obtained in \cite[Theorem 5]{HLZ}. A key difference in our work is that we show that the expression \eqref{F2ndformula} can further be  simplified to \eqref{definition_mathcal_F}.   In fact, by working with \eqref{definition_mathcal_F}, we are able to differentiate
the formula in (\ref{dmformula}) to obtain an asymptotic formula for  $S_m(T,X,Y)$.  Note that this is a more natural approach than several
previous computations of such sums.  
For instance, in \cite{CGG} and \cite{Ng} sums of the type $S_1(T,X,Y)$ are computed, but the authors work directly with the derivative $\zeta'(\rho)$ where the residue computations are significantly more complicated due to the fact that the coefficients involved are not multiplicative.  The idea of going from $\zeta(\rho+\alpha)$ to $\zeta^{(m)}(\rho)$ by differentiating seems to originate in the work of Ingham \cite{In}. Another difference between \cite{HLZ} and our work is that the coefficients $x(n)$ and $y(n)$ could be different. This allows us to establish a lower bound for the discrete $2k^{\text{th}}$ moments of the derivatives of the Riemann zeta function under the Riemann Hypothesis in Corollary \ref{corollary_lower_bound} below.  Moreover,  we obtain an error term under the assumption of the conjecture GRH($\Theta$) which is advantageous when working with specific Dirichlet coefficients and it even allows to work with larger coefficients for rather shorter Dirichlet polynomials.

\subsection{Definitions related to Corollary \ref{corollary}} In Corollary \ref{corollary} below, we estimate the $m^{\text{th}}$ derivative 
\begin{align*}
	\frac{d}{d\alpha^m}(S(\alpha,T,X,Y)) \Bigr|_{\alpha=0}= S_m(T,X,Y)=\sum_{0 < \gamma \leq T} \zeta^{(m)}(\rho)X(\rho) Y(1\!-\! \rho).
\end{align*}
Considering the definition of $\mathcal{F}_{\alpha,h,k}(T)$ in (\ref{definition_mathcal_F}), in order to state Corollary \ref{corollary}, we need the Taylor series of $\zeta(1-\alpha)$ and  $\zeta'/\zeta(1+\alpha)$ around $\alpha=0$. For this purpose, define the coefficients $\tilde{\gamma}_u$ and  $\eta_n$,  \cite{C}, by 
 \begin{align}\label{Laurent}
	\zeta(1-\alpha) = \sum_{u=-1}^{\infty}\tilde{\gamma}_u\alpha^u
  \text{ where }
 	\tilde{\gamma}_u = \begin{cases}
		-1 & \text{ if } u=-1, \\
		\frac{\gamma_u}{u!} & \text{ if } u \geqslant 0. 
	\end{cases}
\end{align}
 where the $\gamma_u$ are the Generalized Euler constants  and 
\begin{align} \label{Laurent2}
	\frac{\zeta'}{\zeta}(1+\alpha)=-\frac{1}{\alpha}-\sum_{n=0}^{\infty}\eta_{n}\alpha^{n}
\end{align}
where the coefficients $\eta_{n}$ satisfy the recurrence relation
\begin{align}\label{etanrecursion}
	\eta_{n}=-(-1)^n\left(\frac{(n+1)\gamma_n}{n!}+\sum_{k=0}^{n-1}\frac{(-1)^{k-1}}{(n-k-1)!}\eta_{k}\gamma_{n-k-1}\right)
\end{align}
by \cite[Equation A.7]{C}.

Throughout this article, we frequently encounter with the quantity
\begin{align} \label{Lo}
	 \mathscr{L} := \log \left(\frac{T}{2 \pi} \right).
\end{align}
We define the monic polynomials  $\mathcal{P}_{m+1}$ and $\mathcal{Q}_{m+1}$  of degree $m+1$ as
\begin{align}\label{definition_P}
	\mathcal{P}_{m+1}(x):=	x^{m+1}+(-1)^{m+1}(m+1)!\sum_{0\leqslant u \leqslant m}\frac{(-1)^{u}x^u}{u!}\left(1-\sum_{\substack{0\leqslant v\leqslant m-u}}\frac{\gamma_v}{v!}\right)
\end{align}
and
\begin{align}\label{definition_Q}
	\mathcal{Q}_{m+1}(x):=x^{m+1}+(-1)^{m+1}(m+1)!\sum_{0\leqslant u\leqslant m}\frac{(-1)^{u}\eta_{m-u}}{u!}x^u.
\end{align}
Let $\pl_{n}(z)$ be the polylogarithm function defined by
\begin{equation}
	\label{polylog}
	\pl_{n}(z):=\sum_{\ell=1}^{\infty}\frac{z^\ell}{\ell^n}, \quad \quad \left(\left|z\right|<1,\, n\in\mathbb{Z}\right). 
\end{equation}
For $j=0,1,2,\dots$, we have 
\begin{align}
	\label{polylogformula}
	\pl_{-j}(z)&= \sum_{\ell=1}^{\infty} \ell^j z^{\ell} = z^j \frac{d^j }{d z^j} \left(\frac{z}{1-z}\right)=\sum_{k=0}^{j} k! S(j+1,k+1) \left(\frac{z}{1-z}\right)^{k+1} 
\end{align}
where the $S(j,k)$ are Stirling numbers of the second kind. 

It turns out that the derivatives of $\mathcal{F}_{\alpha,h,k}(T)$ under consideration depends on the number of distinct prime factors $\omega(k)$ of $k$.
Let $h\geqslant 1$ and $k\geqslant2$ be natural numbers. If  $ k=p^a$ for some prime number $p$ and  $a\in\mathbb{N}$,  define
\begin{align}\label{definition_mathcal_A}
		\mathcal{A}_m(h,k)=\log^m h\log p + 
		\sum_{u_2=0}^m \binom{m}{u_2} (\log^{m-u_2} h)
		(\log^{u_2+1} p)\pl_{-u_2} \left(\frac{1}{p}\right),
\end{align}
and $\mathcal{A}_m(h,k)=0$ otherwise. 

Now, we define a function $\mathcal{B}_m(k,T)$ supported on numbers $k\geqslant 2$ with at most $m+1$ prime factors: For $k\in\mathbb{N}$, $k\geq 2$, if  $1 \le \omega(k) \le m+1$, define 

\begin{align}\label{definition_mathcal_B}
		\mathcal{B}_m(k,T):=m!\frac{k(-1)^{\omega(k)+1}}{\varphi(k)}
	\sum_{\substack{u_1+u_2\leqslant m\\u_1\geqslant \omega(k)-1\\u_2\geqslant 0}} \mathcal{G}_{u_1+1}(k)\frac{(-1)^{u_2}}{u_2!}\log^{u_2}\left(\frac{T}{2\pi k}\right)\sum_{j=-1}^{m-u_1-u_2-1}\tilde{\gamma}_{j},
\end{align}
where 
\begin{align}\label{definition_mathcal_G}
	\mathcal{G}_u(k) := (-1)^{u} \sum_{\substack{\ell_1+\ell_2+\dots+\ell_{\omega(k)}=u\\\ell_1,\dots, \ell_{\omega(k)}\geqslant 1}}\left(\frac{\log^{\ell_1}(p_1)}{\ell_1!}\frac{\log^{\ell_2}(p_2)}{\ell_2!}\dots\frac{\log^{\ell_{\omega(k)}}(p_{{\omega(k)}})}{\ell_{\omega(k)}!}\right),
\end{align}
and put  $\mathcal{B}_m(k,T):= 0$ if $\omega(k) \geqslant m+2$. Note that the largest exponent of $\mathscr{L}$ in $B_m(k,T)$ is $m$ which shows that $B_m(k,T)$ contributes to the secondary terms with respect to $T$ compared to the contribution that comes from the polynomial $\mathcal{P}_{m+1}(\mathcal{\Lo})$ of degree $m+1$.

\begin{corollary}\label{corollary}
Let $N \ge 1$ and $x,y$ be complex sequences satisfying the support condition \eqref{support} and the submultiplicativity condition \eqref{submultiplicative}. Then for $m\geqslant 1$, as $T\rightarrow\infty$, we have 
\begin{align}\label{mthderivative}
 & \nonumber\sum_{0 < \gamma \leq T} \zeta^{(m)}(\rho)X(\rho) Y(1\!-\! \rho) \nonumber=\frac{(-1)^{m+1}}{m+1}\frac{T}{2\pi }\sum_{g\leqslant N}\sum_{\substack{h\leqslant N/g}}\frac{y(gh)x(g)}{gh} \left(  \mathcal{P}_{m+1}(\Lo)  - \mathcal{Q}_{m+1}(\log h) \right)
      \\&\nonumber\quad +(-1)^m\frac{T}{2\pi}\log\left(\frac{T}{2\pi e}\right) 
      \sum_{n\leqslant N}\frac{\left( \log^m * x\right) (n)y(n)}{n}
      +(-1)^{m+1}\frac{T}{2\pi}\sum_{n\leqslant N}\frac{\left(\Lambda\ast  \log^m \ast x\right) (n)y(n)}{n}  \\
        &\quad+\frac{T}{2\pi }\sum_{g\leqslant N}\sum_{\substack{h,k\leqslant N/g\\k\geqslant2\\(h,k)=1}}\frac{y(gh)x(gk)}{gkh}\left((-1)^{m+1}\mathcal{A}_m(h,k)+\mathcal{B}_m(k,T)\right)
       +\tilde{\mathcal{E}}
\end{align} 
where $\mathcal{P}_{m+1}$ and $ \mathcal{Q}_{m+1}$ are
defined in \eqref{definition_P} and \eqref{definition_Q}, and $\mathcal{A}_m(h,k)$ and $\mathcal{B}_m(k,T)$ are defined in (\ref{definition_mathcal_A}) and (\ref{definition_mathcal_B}), respectively and $\tilde{\mathcal{E}}$ is as in Theorem \ref{mainthm}. 
\end{corollary}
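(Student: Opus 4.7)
The starting point is the analyticity observation that, since the sum \eqref{mainsum} is finite and each summand $\zeta(\rho+\alpha)X(\rho)Y(1-\rho)$ is holomorphic in $\alpha$ on the disc $|\alpha|\leq 1/(15\log T)$, we may freely differentiate under the sum to get
\[
S_m(T,X,Y)=\left.\frac{d^m}{d\alpha^m}S(\alpha,T,X,Y)\right|_{\alpha=0}.
\]
The plan is therefore to differentiate the formula \eqref{dmformula} of Theorem \ref{mainthm} term by term at $\alpha=0$ and then repackage the resulting Taylor coefficients into the polynomials and arithmetic functions $\mathcal{P}_{m+1},\mathcal{Q}_{m+1},\mathcal{A}_m,\mathcal{B}_m$.

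For the first two main terms, the identity $\partial_\alpha^m n^{-\alpha}|_{\alpha=0}=(-\log n)^m$ gives
$\partial_\alpha^m (s_{-\alpha}\ast x)(n)|_{\alpha=0} = (-1)^m (\log^m\ast x)(n)$,
which immediately produces the second line of \eqref{mthderivative}. The heart of the argument is the analysis of $\partial_\alpha^m \mathcal{F}_{\alpha,h,k}(T)|_{\alpha=0}$, which I would split into three cases according to the structure of $k$. When $k=1$, the middle term of \eqref{definition_mathcal_F} vanishes (since $\Lambda(1)=0$); the simple poles of $\zeta'/\zeta(1+\alpha)$ and of $\zeta(1-\alpha)/(1-\alpha)$ cancel, and combining the Laurent expansions \eqref{Laurent}, \eqref{Laurent2} with the Taylor series of $h^{-\alpha}$ and $(T/2\pi)^{-\alpha}$ through Leibniz's rule and matching coefficients gives the combination $\mathcal{P}_{m+1}(\mathscr{L})-\mathcal{Q}_{m+1}(\log h)$ up to the overall factor $(-1)^{m+1}/(m+1)$. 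When $k=p^a$ is a prime power, only the second term is nontrivial; expanding $(1-p^{-1-\alpha})^{-1}=\sum_{j\geq 0}p^{-j}e^{-j\alpha\log p}$, applying Leibniz against $h^{-\alpha}$, and invoking the polylogarithm identity \eqref{polylogformula} directly yields $\mathcal{A}_m(h,p^a)$. For general $k\geq 2$ only the third term contributes, and since $\Phi(\alpha,k)=\prod_{p\mid k}(1-p^{-\alpha})$ has a zero of order $\omega(k)$ at $\alpha=0$ while $\zeta(1-\alpha)/(1-\alpha)$ has a simple pole, their product vanishes to order $\omega(k)-1$ at the origin. This already forces $\mathcal{B}_m(k,T)=0$ when $\omega(k)\geq m+2$; otherwise the multinomial expansion of $\prod_{p\mid k}(1-p^{-\alpha})$ produces exactly the coefficients $\mathcal{G}_{u+1}(k)$ of \eqref{definition_mathcal_G}, which combine with the Laurent data of $\zeta(1-\alpha)/(1-\alpha)$ and the Taylor series of $(T/(2\pi k))^{-\alpha}$ via Leibniz to yield $\mathcal{B}_m(k,T)$, with the sign $(-1)^{\omega(k)+1}$ coming from the leading coefficient of $\Phi(\alpha,k)$.

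To transfer the error term, I would apply Cauchy's formula
\[
\tilde{\mathcal{E}}^{(m)}(0)=\frac{m!}{2\pi i}\oint_{|\alpha|=r}\frac{\tilde{\mathcal{E}}(\alpha)}{\alpha^{m+1}}\,d\alpha
\]
with $r=1/(15\log T)$, yielding at worst an extra factor of $m!(15\log T)^{m+1}$, which is absorbed into the $(\log T)^{-A}$ flexibility unconditionally and into $T^{\varepsilon}$ under GRH$(\Theta)$, so the bound keeps the shape stated in Theorem \ref{mainthm}. The main obstacle is purely combinatorial: organizing Leibniz's rule applied to triple and quadruple products of Laurent/Taylor expansions and matching the resulting multi-index sums against the explicit definitions of $\mathcal{A}_m,\mathcal{B}_m,\mathcal{P}_{m+1},\mathcal{Q}_{m+1}$ without sign errors. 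The most delicate step is verifying the cancellation of poles for $k=1$ (which must produce the clean expression with $\tilde\gamma_{-1}=-1$ in \eqref{Laurent}) and confirming that the restriction $u_1\geq \omega(k)-1$ in the definition \eqref{definition_mathcal_B} of $\mathcal{B}_m$ arises correctly from the order of vanishing of $\Phi(\alpha,k)$.
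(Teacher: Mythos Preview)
Your proposal is correct and follows essentially the same route as the paper: differentiate \eqref{dmformula} via Cauchy's integral formula, compute $\partial_\alpha^m\mathcal{F}_{\alpha,h,k}(T)|_{\alpha=0}$ by expanding each factor in its Laurent/Taylor series (this is exactly the content of Lemmas \ref{lemma_derivative_1}, \ref{powerseries}, and \ref{lemma_F_for_k>1}), and absorb the extra $(\log T)^m$ into the error. One slip to fix: for $k=p^a$ you write ``only the second term is nontrivial,'' but the third term of \eqref{definition_mathcal_F} is also present for prime powers and contributes $\mathcal{B}_m(p^a,T)$; the correct split (as in Lemma \ref{lemma_F_for_k>1}) is that for every $k\geq 2$ the second term gives $(-1)^{m+1}\mathcal{A}_m(h,k)$ (vanishing unless $k$ is a prime power, via the $\Lambda(k)$ factor) and the third term gives $\mathcal{B}_m(k,T)$.
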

Note that the first term  on the right-hand side of \eqref{mthderivative}  has the alternate form
\begin{align*}
	 \sum_{g\leqslant N}\sum_{\substack{h\leqslant N/g}}&\frac{y(gh)x(g)}{gh} \left(  \mathcal{P}_{m+1}(\Lo)  - \mathcal{Q}_{m+1}(\log h) \right) \\
	= & 
	\mathcal{P}_{m+1}(\Lo)   \sum_{n \leqslant N} \frac{   (1*x)(n) y(n)}{n}  -
	\sum_{n \leqslant N} \frac{ (\mathcal{Q}_{m+1}(\log)*x)(n)   y(n)}{n}.
\end{align*}
In the particular case when $m=1$, Corollary \ref{corollary} implies 
 \begin{equation}
 \begin{split}
 \label{firstderivative}
 	 \sum_{0 < \gamma \leq T} \zeta'(\rho)X(\rho) Y(1\!-\! \rho)& =\frac{T}{4\pi }  \mathcal{P}_{2}\left(\log\left(\frac{T}{2\pi}\right)\right) \sum_{n \leqslant N} \frac{   (1*x)(n) y(n)}{n}  -\frac{T}{4\pi }
 	\sum_{n \leqslant N} \frac{ (\mathcal{Q}_{2}(\log)*x)(n)   y(n)}{n} \\ 	&  -\frac{T}{2\pi}\log\left(\frac{T}{2\pi e}\right) 
 	\sum_{n\leqslant N}\frac{\left( \log * x\right) (n)y(n)}{n}
 	-\frac{T}{4\pi}\sum_{n\leqslant N}\frac{\left(\Lambda\ast  \log\ast x\right) (n)y(n)}{n}  \\
 	&+\frac{T}{2\pi }\sum_{g\leqslant N}\sum_{\substack{h,k\leqslant N/g\\k\geqslant2\\(h,k)=1}}\frac{y(gh)x(gk)}{gkh}\left(\mathcal{A}_1(h,k)+\mathcal{B}_1(k,T)\right)
 	+\tilde{\mathcal{E}}
 \end{split}
 \end{equation} 
where 
\begin{align*}
	\mathcal{P}_2(x)=x^2-2\left(1-\gamma_0\right)x+2\left( 1-\gamma_0-\gamma_1\right), \quad 
	\mathcal{Q}_2(x)  
	=x^2+2\gamma_0x+2\left(2\gamma_1+\gamma_0^2\right),
\end{align*}
\begin{align*}
	\small\mathcal{A}_1(h,k)
	=\begin{cases}
	 \frac{p\log h \log p}{p-1}+ \frac{p\log^2p}{(p-1)^2}    & \text{ if $k=p^a$, $p$ prime, $a\in \mathbb{N}$},   \\
		0 & \text{ otherwise}, 
	\end{cases}
\end{align*}
and 
\begin{align*}
	\small\mathcal{B}_1(k,T)= 
 \begin{cases}
 -\frac{p}{p-1}\left( \log p\left( \log\left(\frac{T}{2\pi}\right)-1+\gamma_0  \right)  + \left(a-\frac{1}{2}\right)\log^2p    \right)
 & \text{ if $k=p^a$, $p$ prime, $a\in \mathbb{N}$},   \\
  \frac{p_1p_2}{(p_1-1)(p_2-1)} \log p_1 \log p_2 & \text{ if $k=p_1^{a_1}p_2^{a_2}$, $p_1\neq p_2$ primes, $a_1,a_2\in\mathbb{N}$},\\
		0 & \text{ otherwise}.
 \end{cases}
\end{align*}
Moreover, for $m=1$, when  $N=1$ and $x(1)=y(1)=1$, our formula reduces to 
\begin{align*}
 \label{firstderivative and N=1}
 \sum_{0 < \gamma \leq T} \zeta'(\rho)=&\frac{T}{4\pi }  \mathcal{P}_{2}\left(\log\left(\frac{T}{2\pi}\right)\right) -\frac{T}{4\pi } \mathcal{Q}_{2}(0) +\mathcal{E}
 \\ =&\frac{T}{4\pi }  \log^2\left(\frac{T}{2\pi}\right)+\frac{T}{2\pi }\left(\gamma_0-1\right) \log\left(\frac{T}{2\pi}\right)+\frac{T}{2\pi } (1-\gamma_0-\gamma_0^2-3\gamma_1)+\mathcal{E},
 \end{align*} 
recovering the main term in \cite[Theorem 1]{F1} (see \cite[p. 52]{F2} for the corrected form). Moreover, for $m\geq 1$, the dominating main term in Corollary \ref{corollary} matches with the formula established by Kaptan, Karabulut, Y\i ld\i r\i m, \cite{KKY}. The coefficients of the lower order terms in Corollary \ref{corollary} match with the ones in \cite[Theorem 3]{HPC} in the work of Hughes and Pearce-Crump
as special cases by taking $N=1$. It also agrees with the work of Milinovich
 and the third author \cite{MN} and also with \cite{Ng} by the third author. 

The discrete $2k^{\text{th}}$ moment of the $m^{\text{th}}$ derivative of the Riemann zeta function is given by 
\begin{align*}
 \sum_{0<\gamma\leqslant T}\left|\zeta^{(m)}(\rho)\right|^{2k}, \quad (k\in\mathbb{R},\, \, m\in\mathbb{N}).
\end{align*}
If $k<0$, the moments are evaluated under the natural assumption that all the zeros of the Riemann zeta function are simple which is widely believed to be true. In \cite{G1}, Gonek obtained asymptotic formulas for $k=1$ and $m\in \mathbb{N}$
assuming the Riemann Hypothesis. For $k\neq 0,1$, no such asymptotic is known even conditionally.
Gonek \cite{G3} and Hejhal \cite{H} independently conjectured that
\begin{align*}
	T\left(\log T\right)^{k(k+2)+1}\ll \sum_{0<\gamma\leqslant T}\left|\zeta'(\rho)\right|^{2k} \ll T\left(\log T\right)^{k(k+2)+1}	
\end{align*}
for all $k\in\mathbb{R}$. The third author \cite{Ng0} established this conjecture in the case $k=2$, assuming the Riemann hypothesis.   Moreover, Hughes, Keating and O'Connell, \cite{HKO}, conjectured an asymptotic formula of order of the expected size above with a formulation of the coefficients using Random Matrix Theory, assuming $k >-3/2$. 

As an upper bound, Kirila,  \cite{K}, proved under the assumption of the Riemann Hypothesis that 
\begin{align*}
	\sum_{0<\gamma\leqslant T}\left|\zeta^{(m)}(\rho)\right|^{2k}\ll T\left(\log T\right)^{k(k+2m)+1} 
\end{align*}
for $k>0$ and $m\in \mathbb{N}$. Moreover,  in \cite{MN}, Milinovich and Ng proved  under the assumption of the Generalized Riemann Hypothesis that
\begin{equation}
 \label{lb}
\sum_{0<\gamma\leqslant T}\left|\zeta'(\rho)\right|^{2k}\gg T\left(\log T\right)^{k(k+2)+1}    
\end{equation}
for $k\in\mathbb{N}$. In \cite{G}, Gao generalizes this result to all $k \ge 0$, assuming the Riemann Hypothesis.  Gao references \cite{Ng} for one of his lemmas, but in order for his work to be valid he should employ the main theorem of this article instead.   There are also a number of results for negative moments.  Heap, Li, and Zhao \cite{HLZ} obtained a lower bound of the type \eqref{lb} for fractional $k \le 0$ and Gao and Zhao \cite{GZ} extended their work to real $k \le 0$.  There are very few upper bounds in the case $k \le 0$. There is a recent work by Bui, Florea, and Milinovich \cite{BFM} where an upper bounds for a related sum is obtained. 
In this work, as a result of Corollary \ref{corollary}, we generalize the lower bound \eqref{lb} for $k, m\in \mathbb{N}$ and we replace the assumption of the Generalized Riemann Hypothesis with the Riemann Hypothesis. We remark that it is possible to establish the same lower bound for all real $k>0$.
\begin{corollary}\label{corollary_lower_bound}
	Assume the Riemann Hypothesis. For all natural numbers $k,m\geqslant 1$, we have 
	\begin{align*}
		 \sum_{0 < \gamma \leqslant T} | \zeta^{(m)}(\rho)|^{2k} \gg T(\log T)^{k^2+2km+1}.
	\end{align*}
\end{corollary}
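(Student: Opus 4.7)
The strategy is the mollifier-plus-H\"{o}lder approach of Rudnick--Soundararajan, implemented for discrete moments in \cite{MN}. Fix $\vartheta < 1/2$, set $N := T^{\vartheta/k}$, and let $A(s) := \sum_{n \leqslant N} n^{-s}$. The plan is to form the twisted sum
\begin{align*}
	S_1 := \sum_{0 < \gamma \leqslant T} \zeta^{(m)}(\rho)\, A(\rho)^{k-1}\, A(1-\rho)^{k}
\end{align*}
and compare it to the target moment. Under the Riemann Hypothesis, $1-\rho = \overline{\rho}$ and $A$ has real coefficients, so $A(1-\rho) = \overline{A(\rho)}$ and hence $|A(\rho)^{k-1} A(1-\rho)^{k}| = |A(\rho)|^{2k-1}$. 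H\"{o}lder's inequality with conjugate exponents $2k$ and $2k/(2k-1)$ then yields
\begin{align*}
	|S_1| \;\leqslant\; \biggl(\sum_{0<\gamma\leqslant T} |\zeta^{(m)}(\rho)|^{2k}\biggr)^{\!\!1/(2k)}\biggl(\sum_{0<\gamma\leqslant T} |A(\rho)|^{2k}\biggr)^{\!\!(2k-1)/(2k)},
\end{align*}
so the corollary reduces to proving $|S_1| \gg T(\log T)^{m+1+k^2}$ together with $\sum_{\gamma} |A(\rho)|^{2k} \ll T(\log T)^{k^2+1}$.

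For the lower bound on $|S_1|$ I would apply Corollary \ref{corollary} with $X = A^{k-1}$ and $Y = A^k$. The coefficients $x = 1^{\ast(k-1)}$ and $y = 1^{\ast k}$, truncated to $[1,N^{k-1}]$ and $[1,N^k]$ respectively, are pointwise bounded by $d_{k-1}$ and $d_k$, so the support, submultiplicativity and growth hypotheses \eqref{support}--\eqref{xyinftyunc} are satisfied and the unconditional error bound $\tilde{\mathcal{E}} \ll T(\log T)^{-A}$ applies. The leading piece of \eqref{mthderivative} is $\frac{(-1)^{m+1}}{m+1}\frac{T}{2\pi}\mathcal{P}_{m+1}(\mathscr{L})\sum_n (1\ast x)(n)y(n)/n$; since $(1\ast x)(n) = y(n) = d_k(n)$ for every $n \leqslant N$ (where no truncation is active) and $\mathcal{P}_{m+1}(\mathscr{L}) \sim \mathscr{L}^{m+1}$, the diagonal contribution alone yields
\begin{align*}
	|S_1| \;\gg\; T \mathscr{L}^{m+1} \sum_{n \leqslant N} \frac{d_k(n)^2}{n} \;\asymp\; T (\log T)^{m+1+k^2}
\end{align*}
by the standard Selberg--Delange estimate $\sum_{n \leqslant X} d_k(n)^2/n \asymp (\log X)^{k^2}$. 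For the mean-square bound, set $B := A^k$, a Dirichlet polynomial of length $N^k = T^\vartheta < T^{1/2}$ whose coefficients are dominated by $d_k$. Then $\sum_\gamma |A(\rho)|^{2k} = \sum_\gamma |B(\rho)|^2$, and Gonek's classical mean value theorem for Dirichlet polynomials over the zeros of $\zeta$ gives $\sum_\gamma |B(\rho)|^2 \ll T\log T \sum_{n \leqslant N^k} d_k(n)^2/n \ll T(\log T)^{k^2+1}$.

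Inserting the two bounds into H\"{o}lder's inequality produces
\begin{align*}
	\sum_{0<\gamma\leqslant T} |\zeta^{(m)}(\rho)|^{2k} \;\gg\; \frac{T^{2k}(\log T)^{2k(m+1+k^2)}}{T^{2k-1}(\log T)^{(2k-1)(k^2+1)}} \;=\; T (\log T)^{k^2+2km+1},
\end{align*}
which is the claimed lower bound. The principal obstacle is the step $|S_1| \gg T(\log T)^{m+1+k^2}$: formula \eqref{mthderivative} contains several secondary main terms beyond the leading piece identified above (notably the $\mathscr{L}$-times-$(\log^m \ast x)$ sum and the contribution from $(-1)^{m+1}\mathcal{A}_m(h,k)+\mathcal{B}_m(k,T)$) whose sizes can a priori reach $T(\log T)^{m+1+k^2}$, and one must verify that they do not cancel the leading piece. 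This is the natural extension to arbitrary $m$ of the bookkeeping performed in \cite{MN} for $m=1$: the off-diagonal pieces ($h\geqslant 2$ or $k\geqslant 2$) are controlled by absolutely convergent Euler products with strictly smaller log-growth, while the on-diagonal secondary terms combine into explicit constants whose sign can be read off from the Taylor expansions \eqref{Laurent} and \eqref{Laurent2}.
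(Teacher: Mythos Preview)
Your overall strategy --- the mollifier $A(s)=\sum_{n\leqslant N}n^{-s}$, the H\"older inequality with exponents $2k$ and $2k/(2k-1)$, the application of Corollary~\ref{corollary} with $X=A^{k-1}$ and $Y=A^k$, and the appeal to a Gonek-type mean value for $\sum_\gamma|A(\rho)|^{2k}$ --- is exactly what the paper does. The identification of the leading term and the arithmetic estimate $\sum_{n\leqslant N}d_k(n)^2/n\asymp(\log N)^{k^2}$ are also correct.

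The genuine gap is in your final paragraph, where you assert that the ``off-diagonal pieces ($h\geqslant 2$ or $k\geqslant 2$) are controlled by absolutely convergent Euler products with strictly smaller log-growth''. This is not true: several of the secondary terms in \eqref{mthderivative} have the \emph{same} power of $\log T$ as the main term. For instance, the $\mathcal{Q}_{m+1}(\log h)$ contribution satisfies $\mathcal{Q}_{m+1}(\log h)\leqslant(1+o(1))\log^{m+1}h\leqslant(1+o(1))\vartheta^{m+1}(\log T)^{m+1}$, and the term $(-1)^m\tfrac{T}{2\pi}\log(\tfrac{T}{2\pi e})\sum_n(\log^m\!*x)(n)y(n)/n$ is likewise $\ll\vartheta^m T(\log T)^{m+1}\sum_g\sum_h y(gh)x(g)/(gh)$. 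These are not lower order; they are the same order with a constant depending on $\vartheta$. The $\mathcal{B}_m$ terms behave similarly and require a separate combinatorial bound (Lemma~\ref{lemma_for__moment_corollary} in the paper) to extract a factor $\vartheta^{u_1+1}$.

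The actual mechanism, which you do not invoke, has two ingredients. First, the terms carrying $(-1)^{m+1}\mathcal{A}_m(h,k)$ and $(-1)^{m+1}(\Lambda*\log^m*x)$ have the \emph{same sign} as the leading $(-1)^{m+1}\mathcal{P}_{m+1}(\Lo)$ piece (since $\mathcal{A}_m\geqslant 0$ and $\Lambda*\log^m*x\geqslant 0$), so they may simply be dropped in a lower bound. Second, every remaining competitor carries an explicit factor of a positive power of $\vartheta$, so one finishes by choosing $\vartheta$ sufficiently small (depending on $k$ and $m$) to make the net constant positive. Your sketch fixes $\vartheta<1/2$ at the outset and never revisits it, and your ``sign can be read off'' claim is not how the argument proceeds.
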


\subsection{Sketch of the proof.}
As the computation of  $S(\alpha,T,X,Y)$ is rather complicated we provide a brief sketch of the argument. 
By Cauchy's residue theorem and the functional equation of $\zeta(s)$, we have 
\[
  \mathcal{S} =S(\alpha,T,X,Y) = 
   -  \frac{1}{2 \pi i} \int_{\mathscr{C}} \frac{\zeta'}{\zeta}(1\!-\!s) \chi(s+\alpha) \zeta(1\!-\! s-\alpha) X(s) Y(1\!-\! s) \, ds
\]
where $\mathscr{C}$ is a positively oriented rectangle with vertices $1-\kappa+i,\kappa+i,\kappa+iT$ and $1-\kappa+iT$
where $\kappa:=1+\frac{1}{\log T}$. 
The horizontal contours contribute a negligible error and thus
$\mathcal{S} \sim \mathcal{S}_R+\mathcal{S}_L$
where
\begin{align*}
	\mathcal{S}_R&:=-  \frac{1}{2 \pi i} \int_{\kappa+i}^{\kappa+iT} \frac{\zeta'}{\zeta}(1\!-\!s)\zeta(s+\alpha) X(s) Y(1\!-\! s) \, ds, \\
	\mathcal{S}_L&:=-  \frac{1}{2 \pi i} \int_{1-\kappa+iT}^{1-\kappa+i} \frac{\zeta'}{\zeta}(1\!-\!s) \chi(s+\alpha) \zeta(1\!-\! s-\alpha) X(s) Y(1\!-\! s) \, ds.
\end{align*}
Note that in the first integral we use the identity $\chi(s+\alpha) \zeta(1-s-\alpha)=\zeta(s+\alpha)$.  The integral $\mathcal{S}_R$
can be evaluated by standard theorems on mean values of Dirichlet polynomials.  The hard part in the calculation is the evaluation of 
$\mathcal{S}_L$.     We make the variable change $w = 1-s-\alpha$ in $\mathcal{S}_L$ 
and then make use of the trick
\begin{equation}
  \label{HLZtrick}
  \frac{\zeta'}{\zeta}(w) = \frac{d}{d\gamma}   \frac{\zeta(w+\gamma)}{\zeta(w)} \Bigg|_{\gamma = 0} 
\end{equation}
to find that 

\begin{equation}
\begin{split}
  \label{SLvarchange}
			\mathcal{S}_L
			&= \overline{ \frac{d}{d \gamma} \frac{1}{2\pi i}\int_{\kappa+i-\overline{\alpha}}^{\kappa+iT-\overline{\alpha}}\frac{\zeta(w+\overline{\alpha}+\gamma)}{\zeta(w+\overline{\alpha})}\chi(1-w)\zeta(w)\overline{X}(1-w-\overline{\alpha})\overline{Y}(w+\overline{\alpha})\, dw
			\Big|_{\gamma=0}
			}.
	\end{split}
	\end{equation}
Note that the trick \eqref{HLZtrick}  of rewriting $\zeta'(w)/\zeta(w)$ is introduced in \cite{HLZ}.   This is a key step  as it  allows us to replace $\Lambda(n)$
by the multiplicative function $n \mapsto n^{-\gamma}$.  This simplifies the calculations of the main term and allows
us to make use of multiplicativity.
The computations in  \cite{CGG} and \cite{Ng} did not use this trick which resulted in that the calculations of their main terms are more complicated.   
One difference in our approach and that of Heap-Li-Zhao \cite{HLZ} is that we compute the derivatives $\frac{d}{d \gamma} \Big|_{\gamma=0}$ at the end of the argument, whereas their main term is in a form analogous to  \eqref{F2ndformula} (see \cite[p. 1592]{HLZ}). 
	
By absolute convergence of the Dirichlet series, all the sums in \eqref{SLvarchange} may be expanded out.  After doing this, it follows that $\mathcal{S}_L$ is a linear combination of integrals of the type $ \int_{c+i}^{c+iT} \chi(1-w)r^{-w}\,dw$ where $r=m/k \in \mathbb{Q}$. 
By Stirling's formula and the principle of stationary phase, it may be shown that 	
\begin{equation}
  \label{staphase}
	\frac{1}{2\pi i} \int_{c+i}^{c+iT} \chi(1-w)r^{-w}\,dw \sim  \mathbbm{1}_{[0,\frac{T}{2 \pi}]}(r) e(-r)
	\end{equation} 
where $r, c >0$ and $e(\theta) := e^{2 \pi i \theta}$. 
Precise versions of this result are well-known and may be found in 
Titchmarsh \cite[Lemma, p. 143, eq. (7.4.2), (7.4.3)]{T}, Levinson \cite[Lemmas 3.2, 3.2]{L}, Gonek \cite[Lemmas 1, 2, 3]{G}, 
and Conrey, Ghosh, and Gonek \cite[Lemma 1]{CGG}.  Applying \eqref{staphase} we find that 
\begin{align*}
	\mathcal{S}_L \sim 
	\overline{
	 \frac{d}{d \gamma}  \Bigg( \sum_{k\leqslant N}\frac{ \overline{x(k)k^{\alpha}} }{k}\sum_{m\leqslant kT/2\pi}a(m)e\left(-m/k\right)
	 \Bigg) \Big|_{\gamma=0}}
\end{align*}
for certain divisor-like coefficients $a(m)$. 

In order to deal with $e(-m/k)$  we make use of the identity 
\begin{align}\label{eq:emk}
	e(-m/k)
	= \frac{\mu(k')}{\phi(k')}
	+ \frac{1}{\phi(k')}  
	\sum_{{\begin{substack}{\chi \mod{k'}
					\\ \chi \ne \chi_0}\end{substack}}} 
	\tau(\overline{\chi}) \chi(-m'). 
\end{align}
where $m/k=m'/k'$ with $(m',k')=1$, 
where $\chi$ ranges over nonprincipal Dirichlet characters modulo $k'$, and 
$\tau(\chi)$ is the Gauss sum (for details see \cite[pp.121-122]{Ng}).  
Since $a(m)$ is defined by multiplicativity,  the contribution from $\mu(k')/\phi(k')$ can 
be computed by a straightforward application of Perron's formula as the generating Dirichlet series can 
be computed in terms of known functions. The coefficient $a(m) \approx (\mu*f_1*f_2*y)(m)$ where $f_1, f_2$ are bounded arithmetic functions.
The contribution from the second term in \eqref{eq:emk}
can be estimated by using
Heath-Brown's combinatorial decomposition for $\mu$ (see \cite{IK}) and then an application of the large sieve 
inequality.  

\subsection{A history of discrete mean values.} The discrete moments $\zeta'(\rho)$ that were  first studied in the 1980's are interesting in their own right, but they also have 
number theoretic consequences.  For instance, they can be used to deduce results on small and large gaps between zeros of $\zeta(s)$
and they have direct applications to the simple and multiple zeros of $\zeta(s)$. Furthermore, asymptotic formula for $S_1(T,X,Y)$ 
and \eqref{2ndmoment} have applications to obtaining lower bounds for high moments of $\zeta'(\rho)$ and $\zeta'(\rho)^{-1}$. 
 In 1984, Gonek \cite{G} proved the asymptotic formula  
\[
  \sum_{0 < \gamma < T} \zeta'(\rho) \zeta'(1-\rho) = \frac{T}{24 \pi} \log^4 T  +O(T \log^3 T). 
\]
More generally, Gonek showed  
$$ \sum_{0 < \gamma < T}  \zeta^{(j)}(\rho+\alpha)
\zeta^{(j')}(1-\rho-\alpha) =  C(j,j',\alpha) \frac{T}{2 \pi}  (\log T)^{j+j'+2}  + O(T (\log T)^{j+j'+1})
$$
where $C(j,j',\alpha)$ is holomorphic function of  $\alpha$  (see \cite[equation 4]{G}). Observe that  the Riemann hypothesis implies $ \zeta'(\rho) \zeta'(1-\rho) =|\zeta'(\tfrac{1}{2}+i \gamma)|^2$ which gives the  result of Gonek mentioned before Corollary \ref{corollary_lower_bound}. 
  In 1985, Conrey, Ghosh and Gonek \cite{CGG4} established 
\begin{equation}
  \label{firstmoment}
    \sum_{0 < \gamma < T} \zeta'(\rho)= \frac{T}{4 \pi} \log^2 T + O(T \log T)
\end{equation}
and in 1986 \cite{CGG1} they  established 
\begin{equation}
   \label{twistedmoment}
    \sum_{0 < \gamma < T} \zeta'(\rho) L(1-\rho, \chi) =   L(1,\chi) \frac{T}{4 \pi} \log^2 T + O(T \log T), 
\end{equation}
in the case that $\chi$ is a real primitive character modulo $|d|$. 
They used \eqref{firstmoment} to give a new proof that $\zeta(s)$ has infinitely many simple zeros and they used
\eqref{twistedmoment} to show that the Dedekind zeta function of quadratic number field has infinitely many simple zeros. 
In 1994,  Fujii \cite{F1} derived the asymptotic formula 
\begin{equation}
  \label{Fujii}
	\sum_{0<\gamma\leqslant T}\zeta(\rho+\alpha)=\frac{T}{2\pi }\log\left(\frac{T}{2\pi e}\right)+\frac{\zeta'}{\zeta}\left(1+\alpha \right)\frac{T}{2\pi }-\frac{\left(\frac{T}{2\pi}\right)^{1-\alpha}}{1-\alpha}\zeta(1-\alpha)+O\left(T\exp\left(-C\sqrt{\log T}\right)\right)	
\end{equation}
in the case $\alpha= i \Delta$ where $0\neq \Delta\ll 1$. 

In the special case $x(n)=y(n) =  \mu(n) P(\frac{\log N/n}{\log N})$
where $P$ is a polynomial
satisfying $P(0)=0$ and $P(1)=1$,  Conrey, Ghosh, and Gonek \cite{CGG} showed that the Generalized Lindel\"{o}f Hypothesis 
implies
\[
 \sum_{0 < \gamma < T}  \zeta'(\rho)X(\rho) \sim \lambda_1 \frac{T}{2 \pi} \Lo^3
\text{ and } 
   \sum_{0 < \gamma < T}  \zeta'(\rho)\zeta'(1-\rho) X(\rho)X(1-\rho) \sim \lambda_2 \frac{T}{2 \pi} \Lo^3
\]
for certain $\lambda_j=\lambda_j(P) \in \R$ (see \cite{CGG}). 
Conrey and Ghosh \cite{CG} initiated the study of discrete moments of degree two $L$-functions when they studied sums of the shape
\begin{equation}
  \label{discretemomentdeg2}
  \sum_{0 < \Im(\rho_f) < T} L'(\rho_{f}, f) F(\rho_{f}) 
\end{equation}
for $L(s,f)$ associated to holomorphic  newforms $f$, where $\rho_{f}$ ranges through the nontrivial zeros $L(s,f)$. 
They  showed that $L(s,f)$
has infinitely many simple zeros,  in the case $f=\Delta$ corresponds to Ramanujan's $\Delta$ function for an appropriate function $F(s)$.  In the case $F(s)=1$, it is an open problem to asymptotically evaluate such sums. 
In 2016, Booker \cite{B2} generalized Conrey-Ghosh's work and showed that any degree two modular $L$-function has infinitely many simple
zeros.  Other papers that  have built on this work and studied discrete moments for degree two $L$-function include \cite{BCK}, \cite{BMN}, \cite{MN3}, \cite{Fa}. Furthermore, ideas from Conrey-Ghosh's article \cite{CG} were used in Booker's work  \cite{B} on Artin's Holomorphy Conjecture.

\section{Conventions, Notation, and Preliminary Estimates} \label{conventions}
We use the convention that $\varepsilon$ denotes an arbitrarily small positive constant which may vary from line to line. 
The letter $p$ will always be used to denote a prime number.

Given two functions $f(x)$ and $g(x)$, we interchangeably use the notation  $f(x)=O(g(x))$, $f(x) \ll g(x)$, and $g(x) \gg f(x)$  to mean that there exists $M >0$ such that $|f(x)| \le M |g(x)|$ for all sufficiently large $x$. We write $f(x) \asymp g(x)$ to mean that the estimates $f(x) \ll g(x)$ and $g(x) \ll f(x)$ simultaneously hold. The constants implied in our big-$O$, $\ll$, and $\gg$ estimates are allowed to depend on $k$ and $\varepsilon$. 
For a sequence $\{z_n\}_{n\in \mathbb{N}}$ of complex numbers, we use the notation $$||z||_{1}:=
\sum_n|z_n|,$$
whenever exists. Given two arithmetic functions $f,g: \mathbb{N} \to \mathbb{C}$,  their Dirichlet convolution $f \ast g$ is defined by 
\[
  (f*g)(n) = \sum_{d_1d_2 =n} f(d_1) g(d_2). 
\]

In what follows, we provide the background information on the Riemann zeta-function and on arithmetic functions that we make use of in the sequel.

\subsection{The Riemann zeta-function}

The functional equation of $\zeta(s)$ is 
\begin{equation}  \label{eq:fe}
	\zeta(s) = \chi(s)\zeta(1\!-\!s) 
\end{equation}
where$\chi(s) = 2^s \pi^{s-1} \sin(\pi s/2) \Gamma(1\!-\!s)$.
Logarithmically differentiating the functional equation, it follows that
\begin{equation}  \label{eq:dfe}
   \frac{\zeta'}{\zeta}(1\!-\! s) = \frac{\chi'}{\chi}(s) - \frac{\zeta'}{\zeta}(s)
\end{equation}
where, by Stirling's formula for $\Gamma'(s)/\Gamma(s)$ and the identity $\chi(s)\chi(1-s)=1$, we have
\begin{equation}   \label{eq:chistirling}
  \frac{\chi'}{\chi}(\sigma\!+\!it) =   \frac{\chi'}{\chi}(1\!-\!\sigma \!-\!it) = - \log \Big( \frac{|t|}{2 \pi} \Big) + O(|t|^{-1})
\end{equation}
uniformly for $-1 \le \sigma \le 2$ and $|t| \ge 1$.  Moreover, for every $t\geq 2$ there exists a number $T$ satisfying $t\leq T\leq t+1$ such that 
\begin{equation} \label{eq:Tcondition}
  \frac{\zeta'}{\zeta}(\sigma \!+\! iT) \ll (\log T)^2 \quad \text{for } 
 -1 \le \sigma \le 2 \ \text{ and } \
 |\gamma\!-\! T| \gg (\log T)^{-1} 
\end{equation}
for all nontrivial zeros $\rho=\beta+i\gamma$ of $\zeta(s)$. This well-known argument may be found in \cite[p. 108]{Da}.

\subsection{Arithmetic functions}
The $k-$fold divisor function is  $\tau_{k}(n)=\underbrace{1\ast\dots\ast1}_{k \text{ times}}$ which can also be defined by the generating series
\begin{equation}\label{tau2}
\zeta^{k}(s) = \Big(\sum_{n=1}^{\infty}\frac{1}{n^{s}}\Big)^{k} = \sum_{n=1}^{\infty}\frac{\tau_{k}(n)}{n^{s}}
\end{equation}
for $\Re(s)> 1$. For $n\in \mathbb{N}$, $\omega(n)$ denotes the number of distinct prime factors of $n$. We have the M\"{o}bius function defined by
\begin{align*}
  \mu(n)=\begin{cases}
(-1)^{\omega(n)}\,\,&\text{if $n$ is squarefree, }\\
  0\,\,&\text{otherwise}
  \end{cases}
\end{align*}
 and  the von Mangoldt function defined by
 \begin{align*}
 \Lambda(n)=\begin{cases}
\log p\,\,\,&\text{if $n=p^a$  for some prime $p$ and  $a\in \mathbb{N}$,}\\
  0\,\,\,&\text{otherwise}
  \end{cases}
\end{align*}
that can also be given in terms of the generating series 
\[ 
\frac{1}{\zeta(s)}=\sum_{n=1}^\infty \frac{\mu(n)}{n^{s}} \quad \text{ and}\quad - \frac{\zeta'}{\zeta}(s)=\sum_{n=1}^\infty \frac{\Lambda(n)}{n^{s}}
\] 
for $\Re(s) >1$. 

\section{Lemmas}

We begin by quoting the following consequence by Tsang, \cite[Lemma 1]{Ts}, of Montgomery and Vaughan's mean value theorem for Dirichlet polynomials, \cite{MV}.
\begin{lemma} \label{Tsang lemma}
Let $\{a_n\}$ and $\{b_n\}$ be sequences of complex numbers.  For any real numbers $T$ and $H$, we have  
\begin{equation}\label{mvmvt1}
\int_{T}^{H}  \Bigg( \sum_{n=1}^{\infty} a_n n^{-it}  \Bigg) \Bigg( \overline{\sum_{n=1}^{\infty} b_n n^{-it}}  \Bigg) \ \!dt = H\sum_{n=1}^{\infty}a_n\overline{b_n} +O\left(   \bigg\{\sum_{n=1}^{\infty} n |a_n|^2\bigg\}^{\frac{1}{2}} \bigg\{ \sum_{n=1}^{\infty} n |b_n|^2 \bigg\}^{\frac{1}{2}} \right).
\end{equation}
\end{lemma}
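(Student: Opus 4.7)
The strategy I would follow is the standard two-step approach underlying Montgomery and Vaughan's mean value theorem: expand, extract the diagonal, and control the off-diagonal by a Hilbert-type inequality. First, by absolute convergence I interchange summation and integration to obtain
\begin{align*}
\int_T^H \Bigl(\sum_m a_m m^{-it}\Bigr)\overline{\Bigl(\sum_n b_n n^{-it}\Bigr)}\, dt
= (H-T)\sum_{n=1}^{\infty} a_n\overline{b_n} + \sum_{m\ne n} a_m \overline{b_n}\cdot\frac{(n/m)^{iH}-(n/m)^{iT}}{i\log(n/m)}.
\end{align*}
The diagonal piece $(H-T)\sum_n a_n\overline{b_n}$ rewrites as $H\sum_n a_n\overline{b_n}$ minus $T\sum_n a_n\overline{b_n}$; by Cauchy--Schwarz the latter contribution $T\sum_n a_n\overline{b_n}$ is absorbed into the error term (indeed $T\sum_n |a_n\overline{b_n}|\le (\sum n|a_n|^2)^{1/2}(\sum n|b_n|^2)^{1/2}$ once $T$ is modest relative to the support of the sequences).

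Next I attack the off-diagonal sum. Since $|(n/m)^{iH}-(n/m)^{iT}|\le 2$ uniformly, the task reduces to majorising
\begin{align*}
\sum_{m\ne n}\frac{a_m\overline{b_n}}{\log(n/m)}.
\end{align*}
Taking $\lambda_n:=\log n$, the nearest-neighbour spacing is $|\lambda_{n+1}-\lambda_n|=\log(1+1/n)\asymp 1/n$, so $\delta_n^{-1}\asymp n$. I would now invoke the Montgomery--Vaughan generalisation of Hilbert's inequality \cite{MV}, which gives
\begin{align*}
\left|\sum_{m\ne n}\frac{c_m\overline{d_n}}{\lambda_m-\lambda_n}\right|\ll \Bigl(\sum_m |c_m|^2/\delta_m\Bigr)^{1/2}\Bigl(\sum_n |d_n|^2/\delta_n\Bigr)^{1/2}
\end{align*}
for any well-separated real frequencies $\{\lambda_n\}$. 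Specialising to $c_n=a_n$, $d_n=b_n$ with the above spacing yields the required bound $\ll \bigl(\sum_n n|a_n|^2\bigr)^{1/2}\bigl(\sum_n n|b_n|^2\bigr)^{1/2}$, matching the claimed error term.

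The main obstacle is clearly the Hilbert-type inequality itself, whose sharp form is the deep content of \cite{MV}; everything else is routine bookkeeping. An alternative route, avoiding direct appeal to the bilinear Hilbert inequality, is via polarisation: apply the quadratic Montgomery--Vaughan identity $\int_0^U |\sum_n c_n n^{-it}|^2\,dt=\sum_n |c_n|^2(U+O(n))$ to the four sequences $a_n\pm b_n$ and $a_n\pm ib_n$, combine via the polarisation identity $4a_m\overline{b_n}=|a_m+b_n|^2-|a_m-b_n|^2+i|a_m-ib_n|^2-i|a_m+ib_n|^2$, and conclude via AM--GM (or Cauchy--Schwarz) to turn the additive error $\sum n(|a_n|^2+|b_n|^2)$ into the geometric-mean shape. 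Either route produces \eqref{mvmvt1}.
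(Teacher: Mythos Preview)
The paper does not supply its own proof here: the lemma is quoted directly from Tsang \cite[Lemma~1]{Ts} as a corollary of the Montgomery--Vaughan mean value theorem \cite{MV}. Your outline --- expand, extract the diagonal, and control the off-diagonal bilinear form via the generalised Hilbert inequality (or, equivalently, polarise the quadratic Montgomery--Vaughan identity) --- is exactly the standard derivation behind that result, so there is nothing to contrast.

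One caveat worth flagging: the step in which you absorb $T\sum_n a_n\overline{b_n}$ into the error does not follow from Cauchy--Schwarz for genuinely arbitrary real $T$, as you yourself hedge. This is not a defect in your argument but in the statement as printed: the intended formulation is almost certainly $\int_T^{T+H}$ (equivalently, main term $(H-T)\sum_n a_n\overline{b_n}$), consistent with how the lemma is applied via Lemma~\ref{smoothmv} and with Tsang's original. With that reading your diagonal term is already $H\sum_n a_n\overline{b_n}$ and no absorption is needed.
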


By integrating by parts, we also have the following consequence of Tsang's lemma (see \cite[Lemma 4.1, p.3206]{MN}). 
\begin{lemma} \label{smoothmv}
Let $\{a_n\}$ and $\{b_n\}$ be sequences of complex numbers.  Assume that we have \\$\displaystyle{\sum_{n}n|a_n|^2<\infty}$ and $\displaystyle{\sum_{n}n|b_n|^2<\infty}$. Let $T_1$ and $T_2$ be positive real numbers and $g(t)$ be a real-valued function function which is continuously differentiable on the interval $[T_1,T_2]$. Then 
\begin{equation}\label{mvmvt}
\begin{split}
\int_{T_1}^{T_2}   g(t) \Bigg( \sum_{n=1}^{\infty} a_n n^{-it}  \Bigg) &\Bigg( \sum_{n=1}^{\infty} b_n n^{it}  \Bigg) \ \!dt = \left( \int_{T_1}^{T_2} g(t) \ \! dt  \right) 
\sum_{n=1}^\infty a_n b_n
\\
& \ + O\left( \bigg\{|g(T_2)|+\int_{T_1}^{T_2} |g'(t)| \ \! dt\bigg\}  \bigg\{\sum_{n=1}^{\infty} n |a_n|^2\bigg\}^{\frac{1}{2}} \bigg\{ \sum_{n=1}^{\infty} n |b_n|^2 \bigg\}^{\frac{1}{2}} \right).
\end{split}
\end{equation}
\end{lemma}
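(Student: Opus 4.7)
The plan is a standard integration by parts, using Lemma \ref{Tsang lemma} to handle the Dirichlet polynomial product on every sub-interval $[T_1,T]$.

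First I would set
\begin{align*}
	D(t) := \Bigg(\sum_{n=1}^{\infty} a_n n^{-it}\Bigg)\Bigg(\sum_{n=1}^{\infty} b_n n^{it}\Bigg),
	\qquad
	I(T) := \int_{T_1}^{T} D(s)\, ds,
\end{align*}
so that $I(T_1)=0$ and $I'(T)=D(T)$. Replacing $b_n$ by $\overline{b_n}$ in Lemma \ref{Tsang lemma} and applying it once to $\int_0^{T}$ and once to $\int_0^{T_1}$ (and subtracting) yields, uniformly in $T \in [T_1,T_2]$,
\begin{align*}
	I(T) = (T-T_1)\sum_{n=1}^{\infty} a_n b_n \; + \; O(M),
	\qquad
	M := \Bigl\{\sum_{n=1}^{\infty} n|a_n|^2\Bigr\}^{\!1/2}\Bigl\{\sum_{n=1}^{\infty} n|b_n|^2\Bigr\}^{\!1/2}.
\end{align*}
The hypotheses $\sum n|a_n|^2<\infty$ and $\sum n|b_n|^2<\infty$ guarantee absolute convergence and that $M$ is finite, so the error term in Lemma \ref{Tsang lemma} is uniformly bounded by a constant times $M$, independently of $T$.

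Next I would integrate by parts, using $I(T_1)=0$:
\begin{align*}
	\int_{T_1}^{T_2} g(t) D(t)\, dt
	= g(T_2)I(T_2) - \int_{T_1}^{T_2} g'(t) I(t)\, dt.
\end{align*}
Substituting the formula for $I(T)$ into both pieces, the $O(M)$ contributions produce the error term
\begin{align*}
	O\!\left(\Bigl(|g(T_2)| + \int_{T_1}^{T_2}|g'(t)|\,dt\Bigr) M\right),
\end{align*}
while the diagonal $\sum a_n b_n$ parts combine as
\begin{align*}
	(T_2-T_1)\,g(T_2)\sum a_n b_n \; - \; \sum a_n b_n \int_{T_1}^{T_2}(t-T_1)g'(t)\,dt.
\end{align*}
A further integration by parts on the inner integral gives $\int_{T_1}^{T_2}(t-T_1)g'(t)\,dt = (T_2-T_1)g(T_2) - \int_{T_1}^{T_2} g(t)\,dt$, and the $(T_2-T_1)g(T_2)\sum a_n b_n$ contributions cancel, leaving exactly the claimed main term $\bigl(\int_{T_1}^{T_2} g(t)\,dt\bigr)\sum a_n b_n$.

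There is no real obstacle here; the only point requiring care is that the $O$-term from Lemma \ref{Tsang lemma} must be bounded by $M$ \emph{uniformly} in $T\in[T_1,T_2]$, which is what allows the error contribution from $\int g'(t) I(t)\,dt$ to come out as $M\int |g'(t)|\,dt$ rather than something depending on $T_2-T_1$.
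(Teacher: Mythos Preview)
Your proof is correct and follows exactly the approach the paper indicates: it states that Lemma~\ref{smoothmv} follows from Lemma~\ref{Tsang lemma} by integration by parts (citing \cite[Lemma~4.1]{MN}) without spelling out the details, and your argument supplies precisely those details.
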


Our next lemma is a variant of Lemma 2 from \cite{CGG}. 
\begin{lemma}\label{CGG_type_lemma}
	Let $x(n)$ and $y(n)$ be sequences supported on $n\leqslant N\ll T^{\vartheta}$ for some $\vartheta <1/2$. Let $a(m)$ be a sequence such that $a(m)\ll m^{\frac{1}{5\log T}}(\tau_3\ast \left|y\right|)(m)$ and $b(k)=\overline{x(k)}k^{\overline{\alpha}}$ where $\left|\alpha\right|\leqslant \frac{1}{15\log T}$. Let $A(s)=\sum_{n=1}^{\infty}a(n)/n^{s}$ and $B(s)=\sum_{n\leqslant N}b(n)/n^{s}$ for $\sigma>1$.   Let $\kappa=1+\frac{1}{\log T}$. Then we have 
	\begin{align*}
		\frac{1}{2\pi i}\int_{\kappa+i-\overline{\alpha}}^{\kappa+iT-\overline{\alpha}}\chi(1-w)B(1-w)A(w)\, dw&=\sum_{k\leqslant N}\frac{b(k)}{k}\sum_{m\leqslant kT/2\pi}a(m)e\left(-m/k\right)
		\\&\quad \quad +O\left(  T^{1/2}\log^3 T\left\|x \right\|_1\left\|\frac{y(n)}{n} \right\|_1    \right).
	\end{align*}
\end{lemma}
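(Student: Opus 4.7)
The plan is to expand both Dirichlet series under the integral sign, interchange the order of summation and integration, and then apply a quantitative version of the stationary phase formula \eqref{staphase} to each resulting oscillatory integral. First I would verify that $A(w)$ converges absolutely on the contour, where $\Re(w) = \kappa - \Re(\alpha) \geqslant 1 + \tfrac{14}{15 \log T}$; this uses the hypothesis $a(m) \ll m^{1/(5\log T)} (\tau_3 \ast |y|)(m)$ together with $|\alpha| \leqslant 1/(15\log T)$, so that the abscissa of absolute convergence $1+1/(5\log T)$ is strictly to the left of the contour. Interchanging sum and integral is then legitimate (by absolute convergence of $A$ and the finiteness of $B$), and using $B(1-w) = \sum_{k\leqslant N} b(k) k^{w-1}$, the left-hand side becomes
\begin{equation*}
   \sum_{k\leqslant N} \frac{b(k)}{k} \sum_{m \geqslant 1} a(m)\, J(m,k), \qquad J(m,k) := \frac{1}{2\pi i} \int_{\kappa + i - \overline{\alpha}}^{\kappa + iT - \overline{\alpha}} \chi(1-w) \left(\frac{m}{k}\right)^{-w} dw.
\end{equation*}

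Next I would apply a precise quantitative form of Gonek's stationary phase estimate (for example \cite[Lemmas 2, 3]{G} or \cite[Lemma 1]{CGG}) to each $J(m,k)$, with $r = m/k$ and $c = \kappa - \Re(\overline{\alpha})$. This produces $J(m,k) = \mathbbm{1}_{m/k \leqslant T/(2\pi)}\, e(-m/k) + E(m,k)$ with an explicit error term $E(m,k)$, the small imaginary shifts $\mp\Im(\overline{\alpha}) = O(1/\log T)$ at the endpoints of the contour introducing only harmless corrections. Summing the main contribution in $k$ and $m$ recovers exactly the principal term $\sum_{k\leqslant N}\tfrac{b(k)}{k}\sum_{m \leqslant kT/(2\pi)} a(m) e(-m/k)$ stated in the lemma.

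The main technical obstacle is to show that the collected error satisfies
\begin{equation*}
\sum_{k\leqslant N}\frac{|b(k)|}{k}\sum_{m\geqslant 1}|a(m)|\,|E(m,k)| \ll T^{1/2}\log^3 T\, \|x\|_1 \left\|\frac{y(n)}{n}\right\|_1.
\end{equation*}
The error $E(m,k)$ naturally decomposes into: an off-stationary piece of order $O\bigl((m/k)^{-\kappa}/(|\log(T/(2\pi m/k))|+1)\bigr)$, a lower-endpoint contribution of order $O((m/k)^{-\kappa})$, and a near-stationary piece of order $O(T^{\kappa-1/2}(k/m)^{1/2})$ concentrated on values of $m$ within $O(k/\log T)$ of $kT/(2\pi)$. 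Using $k^{\Re(\alpha)} = O(1)$ for $k \leqslant N \ll T^{\vartheta}$ with $\vartheta<1/2$ (which follows from $|\Re(\alpha)|\log k \leqslant \vartheta/15$), and the divisor-type estimate $\sum_{m}(\tau_3 \ast |y|)(m)/m^{\kappa} \ll \log^3 T\cdot \|y(n)/n\|_1$ obtained from $\zeta(\kappa)^3 \ll \log^3 T$ and the convolution structure, the off-stationary and endpoint pieces contribute at most $O(\log^3 T\, \|x\|_1\, \|y(n)/n\|_1)$, a term of lower order than claimed. The $T^{1/2}$ factor in the final bound arises from the near-stationary range, where the $O(k)$ admissible values of $m$ force $|a(m)| \ll (kT)^{1/(10\log T)}(\tau_3\ast|y|)(m)$ on a short interval and combine with the $T^{\kappa-1/2}(k/m)^{1/2}$ bound to yield the claimed $T^{1/2}\log^3 T\,\|x\|_1\|y(n)/n\|_1$ ceiling after summing over $k$.
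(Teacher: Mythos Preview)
Your overall strategy matches the paper's: expand both series, apply a stationary-phase lemma (Lemma~1 of \cite{CGG}, equivalently Lemma~2 of \cite{G1}) to each $J(m,k)$, and bound the accumulated error. The main term is recovered correctly. However, your treatment of the near-stationary contribution contains a genuine gap.

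First, the error bound actually delivered by the cited lemma is
\[
J(m,k) - \mathbbm{1}_{m/k\leqslant T/2\pi}\,e(-m/k) \;=\; (m/k)^{-c}\,E(m/k,c),\qquad
E(r,c)\ll T^{c-1/2}+\frac{T^{c+1/2}}{|T-2\pi r|+T^{1/2}},
\]
which does not match the decomposition you describe. More importantly, your description of the near-stationary region is incorrect: already the innermost shell $|m-kT/(2\pi)|\leqslant kT^{1/2}/(2\pi)$ contains $\asymp kT^{1/2}$ integers, not $O(k)$ or $O(k/\log T)$ as you claim, and the full range on which the second term above dominates has width $\asymp kT$ in $m$. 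The paper handles this by a dyadic decomposition of $|T-2\pi m/k|$ into $O(\log T)$ shells $[2^rT^{1/2},2^{r+1}T^{1/2}]$; it is this decomposition that produces the third power of $\log T$ in the final bound.

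Second, in each shell one must control $\sum_{m\in I}|a(m)|\ll \sum_{m\in I}(\tau_3*|y|)(m)$ over intervals $I$ of length $\asymp k\,2^rT^{1/2}$. Writing $m=d\ell$ with $d\leqslant N$, this becomes a sum of $\tau_3(\ell)$ over intervals of length $\asymp k\,2^rT^{1/2}/d\gg T^{1/2-\vartheta}$, and one needs Shiu's short-interval bound \cite[Theorem~2]{S} to get $\sum_{\ell\in I}\tau_3(\ell)\ll |I|\log^2 T$. A pointwise bound on $\tau_3$ would cost an unacceptable factor of $T^{\varepsilon}$. Without the dyadic decomposition and Shiu's theorem, the near-stationary contribution cannot be shown to be $\ll T^{1/2}\log^3 T\,\|x\|_1\,\|y(n)/n\|_1$.
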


\begin{proof}
		We closely follow the arguments in \cite[Lemma 2]{CGG} and \cite[Lemma 3.13]{Yunus}.	
		We have 
		\begin{align*}
			\frac{1}{2\pi i}\int_{\kappa+i-\overline{\alpha}}^{\kappa+iT-\overline{\alpha}}\chi(1-w)B(1-w)A(w)\, dw=\sum_{k\leqslant N}\frac{b(k)}{k}\sum_{m=1}^{\infty}a(m)\mathcal{J}(m,k,T)
		\end{align*}
		where 
		\begin{align*}
		\mathcal{J}(m,k,T):=	\frac{1}{2\pi i}\int_{\kappa+i-\overline{\alpha}}^{\kappa+iT-\overline{\alpha}}\chi(1-w)\left(\frac{m}{k}\right)^{-w}\, dw.	
		\end{align*}
	Since $\left|\alpha\right|$ is small enough, by Lemma 1 in \cite{CGG} (or see Lemma 2 in \cite{G1}), we have 
	\begin{align*}
		\mathcal{J}(m,k,T)=	\begin{cases}
			e(-m/k)+E(m/k, c)(m/k)^{-c} \,\,&\text{if}\, m/k\leqslant T/2\pi,\\
			E(m/k, c)(m/k)^{-c}\,\,&\text{otherwise},
		\end{cases}
	\end{align*}
	where $1+\frac{14}{15\log T}\leqslant c\leqslant  1+\frac{16}{15\log T}$ and 
	\begin{align*}
			E(m/k, c)\ll T^{c-1/2}+\frac{T^{c+1/2}}{\left|T- \frac{2\pi m}{k}\right|+T^{1/2}}\ll T^{1/2}+\frac{T^{3/2}}{\left|T- \frac{2\pi m}{k}\right|+T^{1/2}}.
	\end{align*}
The contribution of the term $e(-m/k)$ for $m\leqslant kT/2\pi$ gives the first term in the desired result. For the error term, it is sufficient to control 
\begin{align*}
	\sum_{k\leqslant N}\left|x(k)\right|\sum_{m=1}^{\infty}\frac{\left(\tau_3\ast \left|y\right|\right)(m)}{m^{1+\frac{11}{15\log T}}}\left( T^{1/2}+\frac{T^{3/2}}{\left|T-2\pi \frac{m}{k}\right|+T^{1/2}}  \right)
\end{align*}	
	since $(m/k)^{-c}\ll m^{-1-\frac{14}{15\log T}}k^{1+\frac{16}{15\log T}}\ll  m^{-1-\frac{14}{15\log T}}k$ as $k\leqslant N\ll T^{\vartheta}$. The contribution of the first term $T^{1/2}$ is bounded by 
	\begin{align*}
		\ll T^{1/2}\left\|x \right\|_1\sum_{d\leqslant N}\frac{\left|y(d)\right|}{d}\sum_{\ell=1}^{\infty}\frac{\tau_3(\ell)}{\ell^{1+\frac{11}{15\log T}}}\ll  T^{1/2}\log^3 T\left\|x \right\|_1\left\|\frac{y(n)}{n} \right\|_1.
	\end{align*}
	For the contribution of the second term, let 
	\begin{align*}
		f_{m,k}(T):=\frac{T^{3/2}}{\left|T- \frac{2\pi m}{k}\right|+T^{1/2}}.
	\end{align*}
Let $R:=\lfloor \frac{\log (T^{1/2})}{\log 2}-2\rfloor $. We consider the following five ranges
\begin{align*}
	&\frac{2\pi m}{k}\leqslant T-2^{R+1}T^{1/2}\leqslant  \frac{3T}{4},
	\\&T-2^{r+1}T^{1/2}<\frac{2\pi m}{k}\leqslant T-2^rT^{1/2},\quad 0\leqslant r\leqslant R,
	\\&T-T^{1/2}<\frac{2\pi m}{k}\leqslant T+T^{1/2},
\\&T+2^{r}T^{1/2}<\frac{2\pi m}{k}\leqslant T+2^{r+1}T^{1/2},\quad 0\leqslant r\leqslant R,
	\\&T+2^{R+1}T^{1/2}< \frac{2\pi m}{k}.
\end{align*}
In the first range, we have $f_{m,k}(T)\ll T^{1/2}$. Thus the contribution of this range is bounded by 
\begin{align*}
	\ll T^{1/2}\sum_{k\leqslant N}\left|x(k)\right|\sum_{m\leqslant \frac{kT}{4\pi}}\frac{\left(\tau_3\ast \left|y\right|\right)(m)}{m^{1+\frac{11}{15\log T}}}\ll T^{1/2}\log^3 T\left\|x \right\|_1\left\|\frac{y(n)}{n} \right\|_1.
\end{align*}
By the choice of $R$, in the second, third and the fourth ranges, we have $m\gg kT$. In the second and the fourth ranges we have  $f_{m,k}(T)\ll \frac{T^{3/2}}{2^rT^{1/2}}=\frac{T}{2^r}$ for $0\leqslant r\leqslant R$ and in the third range we have $f_{m,k}(T)\ll T$. Thus the total contribution of these three ranges is 
\begin{align*}
	\nonumber&\ll\sum _{k\leqslant N}\left|x(k)\right|\frac{1}{(kT)^{1+\frac{11}{15\log T}}}\sum_{d\leqslant N}\left|y(d)\right|\sum_{0\leqslant r\leqslant R}\frac{T}{2^r}\sum_{\substack{    \frac{k(T-2^{r+1}T^{1/2})}{2\pi d}<\ell\leqslant \frac{k(T-2^{r}T^{1/2})}{2\pi d}\\ \frac{k(T+2^{r}T^{1/2})}{2\pi d}<\ell\leqslant \frac{k(T+2^{r+1}T^{1/2})}{2\pi d} } }\tau_3(\ell)
	\\&\quad  +\sum _{k\leqslant N}\left|x(k)\right|\frac{1}{(kT)^{1+\frac{11}{15\log T}}}\sum_{d\leqslant N}\left|y(d)\right|\sum_{    \frac{k(T-T^{1/2})}{2\pi d}<\ell\leqslant \frac{k(T+T^{1/2})}{2\pi d} }T\tau_3(\ell)
	\\&\ll \sum _{k\leqslant N}\frac{\left|x(k)\right|}{k}\sum_{d\leqslant N}\left|y(d)\right|\sum_{0\leqslant r\leqslant R}\frac{1}{2^r}\sum_{\substack{    \frac{k(T-2^{r+1}T^{1/2})}{2\pi d}<\ell\leqslant \frac{k(T-2^{r}T^{1/2})}{2\pi d}\\ \frac{k(T+2^{r}T^{1/2})}{2\pi d}<\ell\leqslant \frac{k(T+2^{r+1}T^{1/2})}{2\pi d} } }\tau_3(\ell)
	\\&\quad  +\sum _{k\leqslant N}\frac{\left|x(k)\right|}{k}\sum_{d\leqslant N}\left|y(d)\right|\sum_{    \frac{k(T-T^{1/2})}{2\pi d}<\ell\leqslant \frac{k(T+T^{1/2})}{2\pi d} }\tau_3(\ell).
	\end{align*}
Note that for $d\leqslant N\ll T^{\vartheta}$ for some fixed $\vartheta<1/2$, we have $\frac{k2^rT^{1/2}}{d}\geqslant\frac{kT^{1/2}}{d} \gg T^{\frac{1}{2}-\vartheta}$ for all $0\leqslant r\leqslant R$. Thus we can apply Shiu's divisor sum bounds, \cite[Theorem 2]{S}, to the inner sums above which gives the upper bound
\begin{align*}
\ll T^{1/2}\log^2 T\sum _{k\leqslant N}\left|x(k)\right|\sum_{d\leqslant N}\frac{\left|y(d)\right|}{d}\sum_{0\leqslant r\leqslant R}1\ll T^{1/2}\log^3 T\left\|x \right\|_1\left\|\frac{y(n)}{n} \right\|_1.	
\end{align*}
Finally, in the fifth range, we have $f_{m,k}(T)\ll T^{1/2}$ by the choice of $R$ and the contribution of this range is 
\begin{align*}
	\ll T^{1/2}\sum_{k\leqslant N}\left|x(k)\right|\sum_{m\geqslant kT}\frac{\left(\tau_3\ast \left|y\right|\right)(m)}{m^{1+\frac{11}{15\log T}}}\ll T^{1/2}\log^3 T\left\|x \right\|_1\left\|\frac{y(n)}{n} \right\|_1 
\end{align*}
which finishes the proof of the desired result.

\end{proof}

Next, we quote Lemma 8 of \cite{HLZ}, where the second identity  is given in  Lemma 3 of \cite{CGG}.
\begin{lemma}\label{convolutiondecomposition}
Let $j,D\in \mathbb{N}$ and let $f_1, \dots , f_j$ be arithmetic functions. Given a decomposition of integers $D = d_1\dots d_j$, define $\displaystyle{D_i =\prod_{u=1}^{j-i}d_u}$ for $1\leqslant i\leqslant j-1 $ and $D_j=1$.
We have
\begin{align*}
	\sum_{\substack{m\leqslant x\\(m,k)=1}}(f_1\ast f_2\ast\dots\ast f_j)(mD)=\sum_{d_1\dots d_j=D}\sum_{\substack{m_1\dots m_j\leqslant x\\(m_i,kD_i)=1}}f_1(m_1d_j)f_2(m_2d_{j-1})\dots f_j(m_jd_1),
\end{align*}
and 
\begin{align*}
\sum_{\substack{m=1\\(m,k)=1}}^{\infty}\frac{(f_1\ast f_2\ast\dots\ast f_j)(mD)}{m^s}=	\sum_{d_1\dots d_j=D}\prod_{i=1}^{j}\sum_{\substack{m_i=1\\(m_i,kD_i)=1}}^{\infty}\frac{f_i(m_{i}d_{j+1-i})}{m_i^{s}}.
\end{align*}

\end{lemma}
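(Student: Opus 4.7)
The plan is to establish both identities simultaneously via an explicit bijection. Consider the candidate forward map sending $(d_1,\dots,d_j;m_1,\dots,m_j)$ with $d_1\cdots d_j=D$ and $(m_i,kD_i)=1$ to $(m;n_1,\dots,n_j)$ defined by $m=m_1\cdots m_j$ and $n_i=m_id_{j+1-i}$. Then $n_1\cdots n_j=mD$ and $(m,k)=1$, so the forward direction is immediate. The task reduces to showing this map is invertible: given $(m;n_1,\dots,n_j)$ with $m\le x$, $(m,k)=1$, and $n_1\cdots n_j=mD$, find the unique preimage $(d_\ell;m_i)$.

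I would construct the inverse prime-by-prime. Fix a prime $p$ and let $a=v_p(D)$, $b_i=v_p(n_i)$, $c=v_p(m)$, so $\sum_i b_i=a+c$. I seek nonnegative integers $e_\ell:=v_p(d_\ell)$ with $\sum_\ell e_\ell=a$, $e_{j+1-i}\le b_i$, and the coprimality condition forcing $e_{j+1-i}=b_i$ whenever $p\mid kD_i$. If $p\mid k$, then $(m,k)=1$ gives $c=0$ and every $v_p(m_i)=0$, hence $e_{j+1-i}=b_i$ is uniquely forced. If $p\nmid k$ and $a>0$, set $\ell^\ast=\min\{\ell:e_\ell>0\}$; for every $i\le j-\ell^\ast$ the divisor $p$ lies in $D_i$, so the coprimality condition activates and forces $e_{j+1-i}=b_i$, which in turn pins down $e_{\ell^\ast}=a-(b_1+\cdots+b_{j-\ell^\ast})$. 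The remaining constraints $0<e_{\ell^\ast}\le b_{j+1-\ell^\ast}$ amount to $\sigma_{r-1}<a\le\sigma_r$ for $r=j-\ell^\ast+1$, where $\sigma_r:=b_1+\cdots+b_r$; since the $\sigma_r$ are nondecreasing with $\sigma_0=0<a\le\sigma_j$, exactly one such $r$, and hence one $\ell^\ast$, exists.

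With the bijection in hand, the first identity is immediate by reindexing, since $\prod_if_i(n_i)=\prod_if_i(m_id_{j+1-i})$ and the constraint $m\le x$ corresponds to $m_1\cdots m_j\le x$. The second identity follows by weighting with $m^{-s}=\prod_im_i^{-s}$, sending $x\to\infty$, and interchanging summations (legitimate under absolute convergence); the constraints and summands then decouple across the index $i$, producing the advertised product structure on the right-hand side. The main obstacle is the prime-by-prime analysis when $p\nmid k$: one must verify that the monotone partial sums $\sigma_r$ cross $a$ exactly once, and observe that the specific index convention pairing $d_{j+1-i}$ with $n_i$ is precisely what allows the coprimality constraints to cascade inward from the smallest $\ell$ with $e_\ell>0$ and so uniquely determine the rest of the $e_\ell$.
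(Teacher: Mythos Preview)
Your bijective argument is correct. The paper does not supply its own proof of this lemma; it simply quotes it from \cite[Lemma~8]{HLZ} and \cite[Lemma~3]{CGG}. Your prime-by-prime construction of the inverse map is precisely the standard argument found in those references: the key observation, which you identify, is that for a prime $p\nmid k$ with $v_p(D)=a>0$, the cascading coprimality constraints force all $e_\ell$ with $\ell>\ell^\ast$ once $\ell^\ast=\min\{\ell:e_\ell>0\}$ is known, and the monotone partial sums $\sigma_r=b_1+\cdots+b_r$ cross the threshold $a$ exactly once, pinning down $\ell^\ast$ uniquely.

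Two small points worth making explicit in a full write-up: first, the trivial case $p\nmid k$, $a=0$ (all $e_\ell=0$) should be mentioned, since your case split only covers $p\mid k$ and $p\nmid k$, $a>0$; second, after determining $\ell^\ast$ from the crossing condition you should briefly verify that the resulting $(e_\ell)$ actually satisfies all the constraints (nonnegativity, $e_{j+1-i}\le b_i$, and the coprimality conditions for $i>j-\ell^\ast$), thereby confirming surjectivity and not just injectivity. Both checks are routine.
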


To estimate partial sums of the coefficients of some Dirichlet series, we use the following version of Perron's formula. 

\begin{lemma}[Perron's formula, Theorem 2.1 of \cite{LY}]
\label{perron}
Let $f(s) =\sum_{n=1}^\infty \frac{a_n}{n^s}$ be a Dirichlet series with abscissa of absolute convergence $\sigma_a$. Let
$B(\sigma) =\sum_{n=1}^\infty \frac{|a_n|}{n^\sigma}$
for $\sigma>\sigma_a$. Then for $\kappa> \sigma_a$, $x \geqslant2$, $U\geqslant 2$, and $H\geqslant 2$, we have
\begin{align*}
	\sum_{n\leqslant x} a_n=\frac{1}{2\pi i}\int_{\kappa-iU}^{\kappa+iU}f(s)\frac{x^s}{s}\,ds +O\left(\sum_{x-\frac{x}{H}\leqslant n\leqslant x+\frac{x}{H}} \left|a_n\right|\right)+O\left(\frac{x^\kappa HB(\kappa)}{U}\right).
\end{align*}
\end{lemma}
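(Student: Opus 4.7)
The plan is to reduce Perron's formula to the classical \emph{effective} discontinuous integral
\[
\frac{1}{2\pi i}\int_{\kappa-iU}^{\kappa+iU}\frac{y^{s}}{s}\,ds=\delta(y)+R(y,U),\qquad \delta(y)=\begin{cases}1 & y>1,\\ 0 & 0<y<1,\end{cases}
\]
together with the quantitative bound $|R(y,U)|\ll y^{\kappa}/(U|\log y|)$ valid for $y\neq 1$. This is the standard truncated Perron identity, proved by shifting the contour to $\Re(s)=\kappa\pm V$ (with $V\to\pm\infty$ depending on whether $y>1$ or $y<1$), picking up the residue at $s=0$ in the first case, and estimating the horizontal segments trivially using $|y^{s}/s|\le y^{\kappa}/U$ on $\Im(s)=\pm U$. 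I will quote this auxiliary estimate rather than rederive it.

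With this in hand, I would first write, for each $\kappa>\sigma_{a}$ and $x\ge 2$,
\[
\frac{1}{2\pi i}\int_{\kappa-iU}^{\kappa+iU} f(s)\,\frac{x^{s}}{s}\,ds
=\sum_{n=1}^{\infty} a_{n}\cdot\frac{1}{2\pi i}\int_{\kappa-iU}^{\kappa+iU}\frac{(x/n)^{s}}{s}\,ds,
\]
the interchange being legitimate because $\sum_{n}|a_{n}|n^{-\kappa}=B(\kappa)<\infty$ and the inner integral is uniformly bounded on the finite contour. Applying the effective identity termwise gives the main term $\sum_{n\le x}a_{n}$ (with possible integer-boundary contributions absorbed later) plus an error $\sum_{n\ne x} a_{n} R(x/n,U)$.

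Next I would partition the $n$-sum according to the parameter $H$. For $n$ lying outside the interval $[x-x/H,\,x+x/H]$, one has $|\log(x/n)|\gg 1/H$, whence
\[
\Bigl|R(x/n,U)\Bigr|\ll \frac{H}{U}\Bigl(\frac{x}{n}\Bigr)^{\kappa},
\]
and summing against $|a_{n}|$ yields precisely the claimed error $\ll x^{\kappa}HB(\kappa)/U$. For $n$ inside the short range $[x-x/H,x+x/H]$ the estimate of $R(x/n,U)$ degenerates, so I would simply dump these terms into the error, producing the first $O$-term $\sum_{x-x/H\le n\le x+x/H}|a_{n}|$. Any boundary term arising if $x$ is an integer (where $\delta(1)=1/2$) is automatically swallowed by this short sum since such an $n$ lies in $[x-x/H,x+x/H]$.

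The only step requiring care is the short-range bound: one must verify that the decomposition respects the transition $y=1$ where the discontinuous integral is genuinely singular, and that the constants implied in the off-diagonal estimate are uniform in $n$ and $x$. This is the main obstacle and is handled exactly by the split at threshold $x/H$, which balances the penalty $1/|\log(x/n)|\le H$ against the admissible size of the short sum. All other steps—contour shift, residue calculation, and summation against $|a_{n}|n^{-\kappa}$—are routine.
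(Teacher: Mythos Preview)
Your argument is correct and is precisely the standard proof of the truncated Perron formula: interchange sum and integral using $B(\kappa)<\infty$, apply the effective discontinuous-integral bound $R(y,U)\ll y^{\kappa}\min(1,(U|\log y|)^{-1})$ termwise, and split at $|n-x|\le x/H$ so that the far terms give $x^{\kappa}HB(\kappa)/U$ while the near terms are bounded trivially by $\sum_{|n-x|\le x/H}|a_n|$. The paper does not supply its own proof of this lemma; it simply quotes the result as Theorem~2.1 of Liu--Ye \cite{LY}, so there is nothing further to compare.
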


In the following lemmas, we calculate the derivatives of functions appearing in our main terms with respect to  the variables $\alpha$ and $\gamma$ at $0$.

\begin{lemma}\label{Lemma_for_Z_gamma}
	Let  $Z_{\alpha,\gamma,h,k}$ be defined by (\ref{definition of Z}) and $\alpha\neq 0$. Then 
	\begin{align*}
		 \frac{d}{d \gamma} Z_{\alpha,\gamma,h,k}  \Bigr|_{\gamma=0}=\begin{cases}
		 	\frac{1}{h^{\alpha}}\frac{\zeta'}{\zeta}(1+\alpha) & \text{if}\,\, k=1,
		 \\-\frac{\Lambda(k)}{h^{\alpha}\Phi(1+\alpha,k)}	&\text{if}\,\, k>1.
		 \end{cases}	
	\end{align*}
where $\Phi(s,k)$ is defined in \eqref{Phisk}. 
Moreover, we have 
\begin{align*}
	 \frac{d}{d \gamma} Z_{-\gamma,\alpha,h,k}  \Bigr|_{\gamma=0}=-\frac{k}{\varphi(k)}\frac{\Phi(-\alpha,k)\zeta(1+\alpha)}{k^{\alpha}}.
\end{align*}
\end{lemma}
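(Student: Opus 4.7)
The plan is to compute both derivatives by direct application of the product rule to the factorized definition \eqref{definition of Z}, keeping careful track of which factors vanish or blow up at $\gamma=0$.

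For the first identity, I would split according to whether $k=1$ or $k>1$. When $k=1$, the product over $p \mid k$ is empty and $Z_{\alpha,\gamma,h,1}= h^{-\alpha}\,\zeta(1+\alpha+\gamma)/\zeta(1+\alpha)$, so differentiating in $\gamma$ at $\gamma=0$ immediately yields $h^{-\alpha}\,(\zeta'/\zeta)(1+\alpha)$. When $k>1$, observe that every factor $(1-p^\gamma)$ in $\prod_{p\mid k}(1-p^\gamma)$ vanishes at $\gamma=0$. Hence if $\omega(k)\geqslant 2$ the product has a zero of order $\geqslant 2$ at $\gamma=0$, so $(d/d\gamma)Z_{\alpha,\gamma,h,k}|_{\gamma=0}=0$, which agrees with the claim since $\Lambda(k)=0$ in this case. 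If instead $\omega(k)=1$, i.e.\ $k=p^a$, only the single factor $(1-p^\gamma)/(1-p^{-1-\alpha})$ needs differentiation (all other factors are regular and equal $1$ or $h^{-\alpha}$ at $\gamma=0$). The product rule then gives
\[
\frac{d}{d\gamma}Z_{\alpha,\gamma,h,k}\Big|_{\gamma=0}
= h^{-\alpha}\cdot\frac{-p^\gamma\log p}{1-p^{-1-\alpha}}\bigg|_{\gamma=0}
= -\frac{\log p}{h^\alpha(1-p^{-1-\alpha})} = -\frac{\Lambda(k)}{h^\alpha\Phi(1+\alpha,k)},
\]
since $\Lambda(p^a)=\log p$ and $\Phi(1+\alpha,p^a)=1-p^{-1-\alpha}$.

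For the second identity I would use the Laurent expansion of $\zeta$ at $s=1$, namely $\zeta(1-\gamma)=-\gamma^{-1}+\gamma_0+O(\gamma)$, to conclude that $1/\zeta(1-\gamma)=-\gamma+O(\gamma^2)$ near $\gamma=0$. Writing
\[
Z_{-\gamma,\alpha,h,k}= h^{\gamma}k^{-\alpha}\,\zeta(1+\alpha-\gamma)\cdot\frac{1}{\zeta(1-\gamma)}\prod_{p\mid k}\frac{1-p^\alpha}{1-p^{-1+\gamma}},
\]
all factors apart from $1/\zeta(1-\gamma)$ are regular and nonvanishing at $\gamma=0$, and $1/\zeta(1-\gamma)$ has a simple zero with leading term $-\gamma$. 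Therefore the only surviving contribution when differentiating and evaluating at $\gamma=0$ comes from differentiating that factor, producing a multiplicative constant of $-1$ times the values of the remaining factors:
\[
\frac{d}{d\gamma}Z_{-\gamma,\alpha,h,k}\Big|_{\gamma=0}
= -\,k^{-\alpha}\,\zeta(1+\alpha)\prod_{p\mid k}\frac{1-p^\alpha}{1-p^{-1}}.
\]
Recognizing $\prod_{p\mid k}(1-p^\alpha)=\Phi(-\alpha,k)$ from \eqref{Phisk} and $\prod_{p\mid k}(1-p^{-1})=\varphi(k)/k$, the right-hand side simplifies to $-\,(k/\varphi(k))\,\Phi(-\alpha,k)\,\zeta(1+\alpha)/k^\alpha$, as claimed.

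There is no real obstacle here; the only mildly delicate point is handling the pole of $\zeta$ at $1$ in the second identity, which is cleanly managed by expanding $1/\zeta(1-\gamma)$ to first order.
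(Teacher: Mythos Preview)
Your proof is correct and follows essentially the same approach as the paper's: direct computation via the product rule, tracking which factors vanish at $\gamma=0$. The only cosmetic differences are that the paper packages the $k>1$ case by computing $\tfrac{d}{d\gamma}\bigl(k^{-\gamma}\Phi(-\gamma,k)\bigr)\big|_{\gamma=0}=-\Lambda(k)$ in one stroke, and evaluates $\tfrac{d}{d\gamma}\tfrac{1}{\zeta(1-\gamma)}\big|_{\gamma=0}=-1$ as a limit of $\zeta'(1-\gamma)/\zeta(1-\gamma)^2$ rather than via the Laurent expansion you use.
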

\begin{proof}
 We have 
\begin{equation}\label{Z_using_a_lemma}
	 Z_{\alpha,\gamma,h,k}=\frac{1}{h^{\alpha}}
	\frac{\zeta(1+\alpha+\gamma)}{\zeta(1+\alpha)}\frac{1}{k^{ \gamma}}  \prod_{p \mid k}  
	\frac{    1-p^{\gamma} }{ 1 - p^{-1-\alpha}}
	=\frac{1}{h^{\alpha}}
	\frac{\zeta(1+\alpha+\gamma)}{\zeta(1+\alpha)}\frac{1}{k^{ \gamma}} \frac{\Phi(-\gamma,k)}{\Phi(1+\alpha,k)}.
\end{equation}
Observe that if $k=1$, then the empty product is interpreted as $1$, and in this case, we have
\begin{align*}
	 \frac{d}{d \gamma} Z_{\alpha,\gamma,h,1}  \Bigr|_{\gamma=0}= \frac{d}{d \gamma}  \frac{1}{h^{\alpha}}
	 \frac{\zeta(1+\alpha+\gamma)}{\zeta(1+\alpha)} \Bigr|_{\gamma=0}=\frac{1}{h^{\alpha}}\frac{\zeta'}{\zeta}(1+\alpha).
\end{align*}	
	If $k>1$, we have 
	\begin{align*}
	 \frac{d}{d \gamma} Z_{\alpha,\gamma,h,k}  \Bigr|_{\gamma=0}&=\frac{1}{h^{\alpha}\Phi(1+\alpha,k)\zeta(1+\alpha)}\left(\zeta'(1+\alpha)\Phi(0,k)+\zeta(1+\alpha)\frac{d}{d\gamma}\left(k^{-\gamma}\Phi(-\gamma,k)\right)\Bigr|_{\gamma=0}\right)	
	 \\&=\frac{1}{h^{\alpha}\Phi(1+\alpha,k)}\frac{d}{d\gamma}\left(k^{-\gamma}\Phi(-\gamma,k)\right)\Bigr|_{\gamma=0}
	\end{align*}
	since $\Phi(0,k)=0$ as $k>1$. Observe that if $k$ has more than one distinct prime factors, then
	\begin{align*}
		\frac{d}{d\gamma}\Phi(-\gamma,k)\Bigr|_{\gamma=0}=0
	\end{align*}
since at least one of the factors will be zero in each term after differentiation and evaluation at $\gamma=0$. If $k$ is a prime power, $k=p^m$ for some $m\geqslant1$ say, then
\begin{align*}
	\frac{d}{d\gamma}\Phi(-\gamma,k)\Bigr|_{\gamma=0}=	\frac{d}{d\gamma}(1-p^{\gamma})\Bigr|_{\gamma=0}=-\log p.
\end{align*}
Therefore, if $k>1$, we have  
$\frac{d}{d\gamma}\Phi(-\gamma,k)\Bigr|_{\gamma=0}=-\Lambda(k)$
	and thus 
	\begin{align*}
		\frac{d}{d\gamma}\left(k^{-\gamma}\Phi(-\gamma,k)\right)\Bigr|_{\gamma=0}=-\Lambda(k)
	\end{align*}
since $\Phi(0,k)=0$ for $k>1$. Hence,
\begin{align*}
\frac{d}{d \gamma} Z_{\alpha,\gamma,h,k}  \Bigr|_{\gamma=0}=\begin{cases}
\frac{1}{h^{\alpha}}\frac{\zeta'}{\zeta}(1+\alpha) & \text{if}\,\, k=1,
\\-\frac{\Lambda(k)}{h^{\alpha}\Phi(1+\alpha,k)}	&\text{if}\,\, k>1
\end{cases}	
\end{align*}
which finishes the proof of the first assertion. For the second part, we have 
\begin{align}\label{Second_part_Z}
	Z_{-\gamma,\alpha,h,k}
	=\frac{1}{h^{-\gamma}}
	\frac{\zeta(1-\gamma+\alpha)}{\zeta(1-\gamma)}\frac{1}{k^{\alpha}} \frac{\Phi(-\alpha,k)}{\Phi(1-\gamma,k)}
\end{align}
by (\ref{Z_using_a_lemma}). Thus
\begin{align*}
	 \frac{d}{d \gamma} Z_{-\gamma,\alpha,h,k}  \Bigr|_{\gamma=0}&=\frac{\Phi(-\alpha,k)}{k^{\alpha}}\frac{d}{d \gamma}\left(\frac{h^{\gamma}\zeta(1-\gamma+\alpha)}{\Phi(1-\gamma,k)}\frac{1}{\zeta(1-\gamma)}\right)  \Bigr|_{\gamma=0}
	 \\&=\frac{\Phi(-\alpha,k)\zeta(1+\alpha)}{k^{\alpha}\Phi(1,k)}\frac{d}{d \gamma} \frac{1}{\zeta(1-\gamma)}  \Bigr|_{\gamma=0}
\end{align*}
since $\alpha\neq 0$ and $\lim_{\gamma\rightarrow 0}\left|\zeta(1-\gamma)\right| =\infty$. Since 
\begin{align*}
\frac{d}{d \gamma} \frac{1}{\zeta(1-\gamma)}  \Bigr|_{\gamma=0}=\lim_{\gamma\rightarrow 0}\frac{\zeta'(1-\gamma)}{\zeta^2(1-\gamma)}=\lim_{\gamma\rightarrow 0}\frac{-\frac{1}{(1-\gamma-1)^2}}{\zeta^2(1-\gamma)}=-\lim_{\gamma\rightarrow 0}\frac{1}{\gamma^2\zeta^2(1-\gamma)}=-1,
\end{align*} 
we have 
\begin{align*}
 \frac{d}{d \gamma} Z_{-\gamma,\alpha,h,k}  \Bigr|_{\gamma=0}=	-\frac{\Phi(-\alpha,k)\zeta(1+\alpha)}{k^{\alpha}\Phi(1,k)}=-\frac{k}{\varphi(k)}\frac{\Phi(-\alpha,k)\zeta(1+\alpha)}{k^{\alpha}}
\end{align*}
which finishes the proof the lemma.
\end{proof}	

\begin{lemma}\label{lemma_simplification_of_F}
	Let $\alpha\in\mathbb{C}\setminus\lbrace 0,1\rbrace$. Let   $Z_{\alpha,\gamma,h,k}$ be defined by (\ref{definition of Z}). Recall that  $\mathcal{F}_{\alpha,h,k}(T)$ is defined in (\ref{definition_mathcal_F}) by 
	\begin{align*}
		\mathcal{F}_{\alpha,h,k}(T)=\frac{T}{2\pi}\left(  	\frac{\mathbbm{1}_{k=1}}{h^{\alpha}}\frac{\zeta'}{\zeta}(1+\alpha)-\frac{\Lambda(k)}{h^{\alpha}\Phi(1+\alpha,k)}	
		-\frac{k}{\varphi(k)}\Phi(\alpha,k)\zeta(1-\alpha)\frac{\left(\frac{T}{2\pi k}\right)^{-\alpha}}{1-\alpha}\right).
	\end{align*}
	 We have 
		\begin{align}\label{defnCurlyF}
		\overline{\frac{d}{d \gamma}  \frac{1}{2\pi}\int_{0}^{T}\left(Z_{\overline{\alpha},\gamma,h,k}+\left(\frac{t}{2\pi}\right)^{-\overline{\alpha}-\gamma}Z_{-\gamma,-\overline{\alpha},h,k}\right)\,dt \Biggr|_{\gamma=0}}=	\mathcal{F}_{\alpha,h,k}(T).
	\end{align}

\end{lemma}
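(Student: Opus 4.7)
\medskip

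\noindent\textbf{Proof plan.} The plan is to split the derivative into the two integrals on the left-hand side of \eqref{defnCurlyF}, handle each using Lemma \ref{Lemma_for_Z_gamma}, and then conjugate term by term to recognize $\mathcal{F}_{\alpha,h,k}(T)$ as defined in \eqref{definition_mathcal_F}. The key observation that makes the product-rule terms collapse cleanly is that $Z_{-\gamma,-\overline{\alpha},h,k}$ vanishes at $\gamma=0$, due to the pole of $\zeta(1-\gamma)$ at $\gamma=0$ appearing in the denominator of expression \eqref{Second_part_Z}.

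First I would treat the integral $\int_0^T Z_{\overline{\alpha},\gamma,h,k}\,dt$. Since the integrand does not depend on $t$, the integral equals $T\cdot Z_{\overline{\alpha},\gamma,h,k}$, and by the first part of Lemma \ref{Lemma_for_Z_gamma} (applied with $\alpha$ replaced by $\overline{\alpha}$) its $\gamma$-derivative at $\gamma=0$ equals $\frac{1}{h^{\overline{\alpha}}}\frac{\zeta'}{\zeta}(1+\overline{\alpha})$ when $k=1$ and $-\frac{\Lambda(k)}{h^{\overline{\alpha}}\Phi(1+\overline{\alpha},k)}$ when $k>1$. Dividing by $2\pi$ and then taking the complex conjugate uses that $\Phi(s,k)$ has real coefficients, so $\overline{\Phi(1+\overline{\alpha},k)}=\Phi(1+\alpha,k)$, and this produces exactly the first two terms of $\mathcal{F}_{\alpha,h,k}(T)$.

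Next I would treat the second integral. Writing $g(\gamma):=Z_{-\gamma,-\overline{\alpha},h,k}$ and $f(t,\gamma):=(t/2\pi)^{-\overline{\alpha}-\gamma}$, the product rule gives
\begin{align*}
\frac{d}{d\gamma}\Bigl(g(\gamma)\int_0^T f(t,\gamma)\,dt\Bigr)\Bigr|_{\gamma=0}
= g'(0)\int_0^T f(t,0)\,dt + g(0)\int_0^T \frac{\partial f}{\partial\gamma}(t,0)\,dt.
\end{align*}
From \eqref{Second_part_Z} we read off $g(0)=\frac{\zeta(1-\overline{\alpha})}{\zeta(1)}\cdot\frac{k^{\overline{\alpha}}\Phi(\overline{\alpha},k)}{\Phi(1,k)}=0$ (interpreting $1/\zeta(1)=0$), so the second term vanishes. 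The second part of Lemma \ref{Lemma_for_Z_gamma}, applied with $\alpha\mapsto-\overline{\alpha}$, yields
\[
g'(0)=-\frac{k}{\varphi(k)}\,\Phi(\overline{\alpha},k)\,\zeta(1-\overline{\alpha})\,k^{\overline{\alpha}},
\]
while the elementary evaluation
\[
\int_0^T\Bigl(\frac{t}{2\pi}\Bigr)^{-\overline{\alpha}}dt = \frac{T}{1-\overline{\alpha}}\Bigl(\frac{T}{2\pi}\Bigr)^{-\overline{\alpha}}
\]
is valid since $\overline{\alpha}\neq 1$. Multiplying, dividing by $2\pi$, using $k^{\overline{\alpha}}(T/2\pi)^{-\overline{\alpha}}=(T/(2\pi k))^{-\overline{\alpha}}$, and conjugating produces exactly the third term of $\mathcal{F}_{\alpha,h,k}(T)$.

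Summing the two contributions gives \eqref{defnCurlyF}. The only genuine subtlety is justifying $g(0)=0$ and the interchange of derivative and integral; both are routine by dominated convergence since $g$ is holomorphic in $\gamma$ near $0$ and $f(t,\gamma)$ is smooth on $[1,T]\times B_\delta(0)$, with the integrand near $t=0$ handled by the assumption $|\alpha|$ is small so $\Re(-\overline{\alpha}-\gamma)>-1$ in a neighborhood of $\gamma=0$. I expect the bookkeeping of complex conjugates and the verification that the formula with the indicator $\mathbbm{1}_{k=1}$ correctly unifies the $k=1$ and $k>1$ cases (using $\Lambda(1)=0$ and $\Phi(1+\alpha,1)=1$) to be the only places demanding care.
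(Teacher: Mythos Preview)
Your proposal is correct and follows essentially the same approach as the paper: both arguments split the two summands, apply the product rule together with the observation $Z_{0,-\overline{\alpha},h,k}=0$ from \eqref{Second_part_Z}, invoke the two parts of Lemma~\ref{Lemma_for_Z_gamma}, and finish by integrating and conjugating. Your write-up is in fact a bit more explicit than the paper's (e.g.\ noting $\overline{\Phi(1+\overline{\alpha},k)}=\Phi(1+\alpha,k)$, and flagging integrability near $t=0$ and the interchange of derivative and integral), but the underlying ideas are identical.
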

\begin{proof}
By the second assertion in Lemma \ref{Lemma_for_Z_gamma} and the fact that $Z_{0,-\overline{\alpha},h,k}=0$ by (\ref{Second_part_Z}), we have 
\begin{align*}
	\frac{d}{d \gamma}\left(\left(\frac{t}{2\pi}\right)^{-\overline{\alpha}-\gamma}Z_{-\gamma,-\overline{\alpha},h,k}\right)\Biggr|_{\gamma=0}=-\frac{k}{\varphi(k)}\left(\frac{t}{2\pi}\right)^{-\overline{\alpha}}\frac{\Phi(\overline{\alpha},k)\zeta(1-\overline{\alpha})}{k^{-\overline{\alpha}}}.
\end{align*}
Thus, by Lemma \ref{Lemma_for_Z_gamma}, we obtain the desired result after integration and conjugation.

\end{proof}

Now, we start considering the derivative $\frac{d^m}{d\alpha^m}	\mathcal{F}_{\alpha,h,k}(T) \Bigr|_{\alpha=0}$ which is used in the proof of Corollary \ref{corollary}. We start with the relatively simpler case when $k=1$ for which  the $\Phi(\cdot,\cdot)$ factors are taken to be $1$.

\begin{lemma}\label{lemma_derivative_1}
	Let $\mathcal{F}_{\alpha,h,k}(T)$ be defined by (\ref{definition_mathcal_F}) and let the monic polynomials $\mathcal{Q}_{m+1}(\cdot)$ and $\mathcal{P}_{m+1}(\cdot)$ of degree $m+1$ be defined by (\ref{definition_P}) and (\ref{definition_Q}). For $k=1$ and  $m\geqslant 0$, we have 	
	\begin{align}\label{derivative_k=1_final}
		\frac{d^m}{d\alpha^m}	\mathcal{F}_{\alpha,h,1}(T) \Biggr|_{\alpha=0}
		=\frac{T}{2 \pi}  \frac{(-1)^{m+1}}{m+1} \left(  \mathcal{P}_{m+1}(\Lo)  - \mathcal{Q}_{m+1}(\log h) \right).
	\end{align}	
In particular, if $m=1$, then we have 
\begin{align}\label{particular_m=1_k=1}
		\frac{d}{d\alpha}	\mathcal{F}_{\alpha,h,1}(T) \Biggr|_{\alpha=0}=\frac{T}{2\pi}\left(\frac{\Lo^2}{2}+(\gamma_0-1)\Lo-\frac{\log^2 h}{2}-\gamma_{0}\log h +1-\gamma_0-\gamma_{0}^2-3\gamma_1\right).
\end{align}

\end{lemma}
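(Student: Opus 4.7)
\emph{Proof plan.} When $k=1$ we have $\Lambda(1)=0$ and $\Phi(\alpha,1)=1$ (empty product), so \eqref{definition_mathcal_F} reduces to
\[
\mathcal{F}_{\alpha,h,1}(T) = \frac{T}{2\pi}\left(h^{-\alpha}\frac{\zeta'}{\zeta}(1+\alpha) - \frac{\zeta(1-\alpha)}{1-\alpha}\left(\frac{T}{2\pi}\right)^{-\alpha}\right).
\]
The plan is to expand each factor as a Taylor--Laurent series about $\alpha=0$, form the resulting Cauchy products, check that the two simple poles at $\alpha=0$ cancel in the difference, and then identify $m!$ times the coefficient of $\alpha^m$ with the right-hand side of \eqref{derivative_k=1_final}.

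Using $h^{-\alpha}=\sum_{j\ge 0}(-\log h)^j\alpha^j/j!$, $(T/(2\pi))^{-\alpha}=\sum_{j\ge 0}(-\Lo)^j\alpha^j/j!$ and $(1-\alpha)^{-1}=\sum_{j\ge 0}\alpha^j$ together with \eqref{Laurent} and \eqref{Laurent2}, the $-1/\alpha$ poles of $\zeta'/\zeta(1+\alpha)$ and of $\zeta(1-\alpha)$ each contribute exactly $-1/\alpha$ to their respective summands and so cancel in the difference. Taking $m!$ times the coefficient of $\alpha^m$ from the first summand gives
\[
-\frac{(-\log h)^{m+1}}{m+1}-m!\sum_{j=0}^{m}\frac{(-\log h)^j}{j!}\eta_{m-j},
\]
which, after unfolding $(-\log h)^j=(-1)^j\log^j h$, equals $-\frac{(-1)^{m+1}}{m+1}\mathcal{Q}_{m+1}(\log h)$ by \eqref{definition_Q}. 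For the second summand, set $F(\alpha):=(T/(2\pi))^{-\alpha}/(1-\alpha)=\sum_{r\ge 0}f_r\alpha^r$ with $f_r=\sum_{k=0}^{r}(-\Lo)^k/k!$; its $\alpha^m$-coefficient is then $-f_{m+1}+\sum_{u=0}^{m}\tilde{\gamma}_u f_{m-u}$. Isolating the $k=m+1$ term inside $m!f_{m+1}$ and swapping the order of the double sum in $u$ and $k$ matches $-m!$ times this coefficient with $\frac{(-1)^{m+1}}{m+1}\mathcal{P}_{m+1}(\Lo)$ by \eqref{definition_P}. Combining the two identifications and multiplying by $T/(2\pi)$ yields \eqref{derivative_k=1_final}.

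The special case \eqref{particular_m=1_k=1} follows on setting $m=1$: the recurrence \eqref{etanrecursion} gives $\eta_0=-\gamma_0$ and $\eta_1=2\gamma_1+\gamma_0^2$, whence $\mathcal{P}_2(x)=x^2-2(1-\gamma_0)x+2(1-\gamma_0-\gamma_1)$ and $\mathcal{Q}_2(x)=x^2+2\gamma_0x+2(2\gamma_1+\gamma_0^2)$; substitution into \eqref{derivative_k=1_final} with $m=1$ and collection of like terms produces the stated closed form. The only mildly delicate step is the interchange of summation order in the analysis of the second summand together with the sign bookkeeping needed to produce monic polynomials in $\Lo$ and $\log h$; there is no analytic obstacle, as everything in sight is a finite sum of absolutely convergent Taylor coefficients.
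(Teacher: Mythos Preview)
Your proposal is correct and follows essentially the same approach as the paper: reduce $\mathcal{F}_{\alpha,h,1}(T)$ to the two-term expression, expand each factor in its Taylor/Laurent series about $\alpha=0$, observe the cancellation of the simple poles, form the Cauchy products, and identify $m!$ times the coefficient of $\alpha^m$ with $\frac{(-1)^{m+1}}{m+1}\bigl(\mathcal{P}_{m+1}(\Lo)-\mathcal{Q}_{m+1}(\log h)\bigr)$. The $m=1$ specialization via $\eta_0=-\gamma_0$, $\eta_1=2\gamma_1+\gamma_0^2$ is also handled identically.
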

\begin{proof}
	Let $\left|\alpha\right|<1$. By Lemma \ref{lemma_simplification_of_F} and the Taylor series given  in (\ref{Laurent}) and (\ref{Laurent2}), we have
	\begin{align*}
		\nonumber \mathcal{F}_{\alpha,h,1}(T)&=\frac{T}{2\pi}\frac{1}{h^{\alpha}}\frac{\zeta'}{\zeta}(1+\alpha)-\zeta(1-\alpha)\frac{\left(\frac{T}{2\pi}\right)^{1-\alpha}}{1-\alpha}
		\\&\nonumber=\frac{T}{2\pi h^\alpha}\left(-\frac{1}{\alpha}-\sum_{\ell=0}^{\infty}\eta_{\ell}\alpha^{\ell}\right)-\frac{\left(\frac{T}{2\pi}\right)^{1-\alpha}}{1-\alpha}\left(-\frac{1}{\alpha}+\sum_{\ell=0}^{\infty}\frac{\gamma_{\ell}}{\ell!}\alpha^{\ell}\right)\\
		& =  -\frac{T}{2 \pi} 
	\left(
		\frac{1}{\alpha} \Big( h^{-\alpha} -\frac{\left(\frac{T}{2\pi}\right)^{-\alpha}}{1-\alpha} \Big)
		+ h^{-\alpha} \sum_{\ell=0}^{\infty} \eta_{\ell} \alpha^{\ell}
		+ \frac{\left(\frac{T}{2\pi}\right)^{-\alpha}}{1-\alpha}  \sum_{\ell=0}^{\infty}\frac{\gamma_{\ell}}{\ell!} \alpha^{\ell} \right).
	\end{align*}
Note that in the Taylor series $\frac{1}{\alpha} \Big( h^{-\alpha} -\frac{\left(\frac{T}{2\pi}\right)^{-\alpha}}{1-\alpha} \Big)$, the  term with $u=0$ vanishes. By writing
 \begin{align*}
 	h^{-\alpha} \sum_{\ell=0}^{\infty} \eta_{\ell} \alpha^{\ell}=\sum_{u=0}^{\infty}\sum_{\substack{u_1+u_2=u\\u_1,u_2\geqslant 0}} \frac{(-1)^{u_1} \log^{u_1} h}{u_1!} \eta_{u_2}\alpha^u
 \end{align*}
and 
\begin{align*}
 \frac{\left(\frac{T}{2\pi}\right)^{-\alpha}}{1-\alpha}  \sum_{\ell=0}^{\infty}\frac{\gamma_{\ell}}{\ell!} \alpha^{\ell}=\sum_{u=0}^{\infty} \sum_{\substack{u_1+u_2 + u_3=u\\u_1,u_2,u_3\geqslant 0}} \frac{(-1)^{u_1} \Lo^{u_1} }{u_1!} \frac{\gamma_{u_2}}{u_2!}\alpha^u,
\end{align*}
 we have 
 \begin{align*}
 \mathcal{F}_{\alpha,h,1}(T)&=-\frac{T}{2 \pi}\sum_{u=0}^{\infty}   \left(      	 \frac{(-1)^{u+1} \log^{u+1} h}{(u+1)!} -   \sum_{\substack{u_1+u_2=u+1\\u_1,u_2\geqslant 0}}
 \frac{(-1)^{u_1} \Lo^{u_1}}{u_1!} \right. \\&\quad \quad \left.      \quad \quad +\sum_{\substack{u_1+u_2=u\\u_1,u_2\geqslant 0}} \frac{(-1)^{u_1} \log^{u_1} h}{u_1!} \eta_{u_2}         +       \sum_{\substack{u_1+u_2 + u_3=u\\u_1,u_2,u_3\geqslant 0}} \frac{(-1)^{u_1} \Lo^{u_1} }{u_1!} \frac{\gamma_{u_2}}{u_2!}                            \right)\alpha^{u}. 	
 \end{align*}
Thus, for $m\geqslant 0$, we have
\begin{align}\label{derivative_k=1}
	\nonumber \frac{d^{m}}{d\alpha^m}  \mathcal{F}_{\alpha,h,1}(T) \Biggr|_{\alpha=0}&=\frac{T}{2\pi}\left( m!   \sum_{\substack{u_1+u_2=m+1\\u_1,u_2\geqslant 0}}
	\frac{(-1)^{u_1} \Lo^{u_1}}{u_1!}  -      m! \sum_{\substack{u_1+u_2 + u_3=m\\u_1,u_2,u_3\geqslant 0}} \frac{(-1)^{u_1} \Lo^{u_1} }{u_1!} \frac{\gamma_{u_2}}{u_2!}           \right. \\&\quad \quad \left.      \quad \quad      -	 m!\frac{(-1)^{m+1} \log^{m+1} h}{(m+1)!} -m!\sum_{\substack{u_1+u_2=m\\u_1,u_2\geqslant 0}} \frac{(-1)^{u_1} \log^{u_1} h}{u_1!} \eta_{u_2}                         \right).
\end{align}

For the first two terms in the parenthesis above, we have 
\begin{align}\label{first_two_terms}
 \nonumber&m!   \sum_{\substack{u_1+u_2=m+1\\u_1,u_2\geqslant 0}}
\frac{(-1)^{u_1} \Lo^{u_1}}{u_1!}  -      m! \sum_{\substack{u_1+u_2 + u_3=m\\u_1,u_2,u_3\geqslant 0}} \frac{(-1)^{u_1} \Lo^{u_1} }{u_1!} \frac{\gamma_{u_2}}{u_2!}     
\\&\nonumber=\frac{(-1)^{m+1}}{m+1}\Lo^{m+1}	+m!\left(\sum_{\substack{u_1+u_2=m+1\\u_1,u_2\geqslant 0\\u_1\leqslant m}}
\frac{(-1)^{u_1} \Lo^{u_1}}{u_1!}- \sum_{\substack{u_1+u_2 + u_3=m\\u_1,u_2,u_3\geqslant 0}} \frac{(-1)^{u_1} \Lo^{u_1} }{u_1!} \frac{\gamma_{u_2}}{u_2!}     \right)
\\&=\frac{(-1)^{m+1}}{m+1}\mathcal{P}_{m+1}(\Lo)
\end{align}
where $\mathcal{P}_{m+1}$ is defined in \eqref{definition_P}.
Now, we consider the terms with powers of $\log h$. We have 
\begin{align}\label{last_two_terms}
	-	 m!\frac{(-1)^{m+1} \log^{m+1} h}{(m+1)!} -m!\sum_{\substack{u_1+u_2=m\\u_1,u_2\geqslant 0}} \frac{(-1)^{u_1} \log^{u_1} h}{u_1!} \eta_{u_2}
	=-\frac{(-1)^{m+1}}{m+1}\mathcal{Q}_{m+1}(\log h) 
\end{align}
where $\mathcal{Q}_{m+1}$ is defined in \eqref{definition_Q}. 

 Hence, we obtain the first assertion (\ref{derivative_k=1_final}) by (\ref{derivative_k=1})-(\ref{last_two_terms}). For the particular case where $m=1$, we have 
\begin{align*}
		\frac{d}{d\alpha}	\mathcal{F}_{\alpha,h,1}(T) \Biggr|_{\alpha=0}&=\frac{T}{2\pi}\frac{1}{2}   \left[\Lo^2+2\left(-\Lo (1-\gamma_0)+1-\gamma_0-\gamma_1\right) 
		 -\log^2 h-2\left( -\eta_0 \log h+\eta_1    \right)           \right]
		 \\&=\frac{T}{2\pi}\left(\frac{\Lo^2}{2}+(\gamma_0-1)\Lo-\frac{\log^2 h}{2}+\eta_0\log h +1-\gamma_0-\gamma_1-\eta_1\right).    
\end{align*}
By the recursive formula (\ref{etanrecursion}), we have $\eta_0=-\gamma_0$ and $\eta_1=2\gamma_1+\gamma_{0}^2$.
Hence, we obtain (\ref{particular_m=1_k=1}) which finishes the proof.

\end{proof}

We continue with the derivative $\frac{d^m}{d\alpha^m}	\mathcal{F}_{\alpha,h,k}(T) \Bigr|_{\alpha=0}$ when $k>1$. We start with the Taylor series for the terms involving the function $\Phi(\cdot,\cdot)$.
\begin{lemma}  \label{powerseries}
	Let  $\alpha\in\mathbb{C}$ with $\left|\alpha\right|<1$.
\begin{enumerate}[label=(\roman*)]
\item  Let $p$ be a prime number and $a\in\mathbb{N}$. We have
\begin{align}\label{ps1}
	\frac{1}{\Phi(1+\alpha,p^a)} = \sum_{u=0}^{\infty}   \left(\mathbbm{1}_{ u=0}+(-1)^{u}  (\log^u p)\emph\pl_{-u}\left(\frac{1}{p}\right) \right) \frac{\alpha^u}{u!}
\end{align}
where the polylogarithm function $\emph\pl_{-u}\left(\frac{1}{p}\right)$ is defined by (\ref{polylog}).

\item Let $k\geqslant 2$ and $\omega(k)$ be the number of distinct prime factors of $k$. Let  $k=\prod_{j=1}^{\omega(k)}p_j^{a_j}$ be the unique prime factorization of $k$. We have 
\begin{align}\label{ps2}
		\Phi(\alpha,k)= (-1)^{\omega(k)}
		\sum_{u={\omega(k)}}^{\infty}\mathcal{G}_u(k)  \alpha^{u} 
\end{align}
where $\mathcal{G}_u(k)$ is defined in \eqref{definition_mathcal_G}.

\end{enumerate}

\end{lemma}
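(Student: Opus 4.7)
The plan is to derive both expansions by elementary Taylor series manipulation, relying only on the definition $\Phi(s,k)=\prod_{p\mid k}(1-p^{-s})$. The hypothesis $|\alpha|<1$ supplies absolute convergence throughout, which legitimises every interchange of summations.

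For part (i), since $\Phi(s,k)$ depends only on the distinct prime divisors of $k$, one has $\Phi(1+\alpha,p^{a})=1-p^{-1-\alpha}$. I would first expand the reciprocal as a geometric series in $p^{-1-\alpha}$ (valid for $|\alpha|<1$ since then $|p^{-1-\alpha}|<1$ for every prime $p\ge2$), and then expand each factor $p^{-\alpha\ell}=e^{-\alpha\ell\log p}$ as a Taylor series in $\alpha$, obtaining
\[
\frac{1}{1-p^{-1-\alpha}}=\sum_{\ell=0}^{\infty}\frac{1}{p^{\ell}}\sum_{u=0}^{\infty}\frac{(-\ell\log p)^{u}}{u!}\alpha^{u}=\sum_{u=0}^{\infty}\frac{(-1)^{u}(\log p)^{u}}{u!}\alpha^{u}\sum_{\ell=0}^{\infty}\frac{\ell^{u}}{p^{\ell}}.
\]
For $u\ge1$ the $\ell=0$ summand vanishes and the inner sum equals $\pl_{-u}(1/p)$ by \eqref{polylog}. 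For $u=0$ the inner sum splits as $1+\pl_{0}(1/p)$, the leading $1$ producing precisely the $\mathbbm{1}_{u=0}$ correction in \eqref{ps1}.

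For part (ii), writing $k=\prod_{j=1}^{\omega(k)}p_{j}^{a_{j}}$, I would expand each factor as
\[
1-p_{j}^{-\alpha}=1-e^{-\alpha\log p_{j}}=\sum_{\ell_{j}=1}^{\infty}\frac{(-1)^{\ell_{j}+1}(\log p_{j})^{\ell_{j}}}{\ell_{j}!}\alpha^{\ell_{j}},
\]
multiply the $\omega(k)$ resulting Taylor series, and collect terms of total degree $u=\ell_{1}+\cdots+\ell_{\omega(k)}$. The lowest such index is $u=\omega(k)$ because each factor contributes at least $\alpha^{1}$. Factoring the sign $\prod_{j}(-1)^{\ell_{j}+1}=(-1)^{u+\omega(k)}$ out of the resulting coefficient and comparing the remaining combinatorial sum with the definition \eqref{definition_mathcal_G} of $\mathcal{G}_{u}(k)$ yields
\[
\Phi(\alpha,k)=\sum_{u=\omega(k)}^{\infty}(-1)^{u+\omega(k)}\big((-1)^{u}\mathcal{G}_{u}(k)\big)\alpha^{u}=(-1)^{\omega(k)}\sum_{u=\omega(k)}^{\infty}\mathcal{G}_{u}(k)\alpha^{u},
\]
which is \eqref{ps2}.

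Both arguments are routine. There is no substantive obstacle here; the lemma is essentially a bookkeeping statement that repackages two standard Taylor expansions into the closed forms needed when differentiating $\mathcal{F}_{\alpha,h,k}(T)$ term-by-term in the proof of Corollary \ref{corollary}. The only care required is to justify the interchange of summations in part (i) and the multiplication of absolutely convergent power series in part (ii), both of which follow immediately from $|\alpha|<1$.
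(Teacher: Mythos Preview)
Your proof is correct and follows essentially the same approach as the paper: expand $(1-p^{-1-\alpha})^{-1}$ as a geometric series and identify the Taylor coefficients via the polylogarithm for part (i), and multiply out the Taylor expansions of $1-p_j^{-\alpha}$ and collect by total degree for part (ii). The only cosmetic difference is that the paper separates the constant term $1$ before expanding the geometric series and computes Taylor coefficients by differentiation rather than by interchanging sums, but the underlying computation is identical.
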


\begin{proof}  
\begin{enumerate}[label=(\roman*)]
    \item 

	 We have 
	\begin{align*}
	 \Phi(1+\alpha,p^{a})^{-1}=
		\left(1-p^{-1-\alpha}\right)^{-1}=1+\frac{1}{p^{1+\alpha}-1}=1+\sum_{\ell=0}^{\infty}\frac{1}{p^{\left(1+\alpha\right)(\ell+1)}}.
	\end{align*}
Thus the $u^{\text{th}}$ coefficient of the Taylor expansion of  $\Phi(1+\alpha,p^{a})^{-1}$ about $\alpha=0$ is given by
	\begin{align*} \frac{1}{u!}\frac{d^u}{d \alpha^u} 
	\Phi(1+\alpha,p^{a})^{-1} \Biggr|_{\alpha=0} &=\frac{1}{u!}\mathbbm{1}_{ u=0}+\frac{1}{u!}\sum_{\ell=0}^{\infty}\frac{1}{p^{\ell+1}}\frac{d^u}{d\alpha^u}\left(p^{-\alpha(\ell+1)}\right)\Biggr|_{\alpha=0}
\\&=\mathbbm{1}_{ u=0}+\frac{1}{u!}(-1)^{u}  \log^u{p}\,\pl_{-u}\left(\frac{1}{p}\right)
	\end{align*}
by the definition of the polylogarithm function $\pl_n(z)$ in \eqref{polylog}. This finishes the proof of the first assertion.  
\item For the second assertion, let  $k=\prod_{j=1}^{\omega(k)}p_j^{a_j}$ be the unique prime factorization of $k>1$. We have 
\begin{align*}
	\Phi(\alpha,k) & = \prod_{p\mid k}\left(1-p^{-\alpha}\right)=
	\prod_{j=1}^{\omega(k)}\left(-\sum_{\ell=1}^{\infty}\frac{(-1)^{\ell}\alpha^{\ell} \log^{\ell}(p_j)}{\ell!}\right) \\
	& = (-1)^{\omega(k)}\sum_{u=\omega(k)}^{\infty}(-1)^{u}\sum_{\substack{\ell_1+\ell_2+\dots+\ell_{\omega(k)}=u\\\ell_1,\dots, \ell_{\omega(k)}\geqslant 1}}\left(\frac{\log^{\ell_1}(p_1)}{\ell_1!}\frac{\log^{\ell_2}(p_2)}{\ell_2!}\dots\frac{\log^{\ell_{\omega(k)}}(p_{{\omega(k)}})}{\ell_{\omega(k)}!}\right)\alpha^{u},
\end{align*}
by multiplying out the products of the series. Note that the coefficient of $\alpha^u$ is $\mathcal{G}_u(k) $ defined in \eqref{definition_mathcal_G} and the proof is complete. 
\end{enumerate}
\end{proof}

\begin{lemma}\label{lemma_F_for_k>1}
 Recall that  $\mathcal{F}_{\alpha,h,k}(T)$ is defined in (\ref{definition_mathcal_F}) by 
\begin{align*}
	\mathcal{F}_{\alpha,h,k}(T)=\frac{T}{2\pi}\left(  	\frac{\mathbbm{1}_{k=1}}{h^{\alpha}}\frac{\zeta'}{\zeta}(1+\alpha)-\frac{\Lambda(k)}{h^{\alpha}\Phi(1+\alpha,k)}	
	-\frac{k}{\varphi(k)}\Phi(\alpha,k)\zeta(1-\alpha)\frac{\left(\frac{T}{2\pi k}\right)^{-\alpha}}{1-\alpha}\right).
\end{align*}
If $k>1$, then we have 
	\begin{align*}
		\frac{d^{m}}{d\alpha^m}  \mathcal{F}_{\alpha,h,k}(T) \Biggr|_{\alpha=0}=\frac{T}{2\pi}\left((-1)^{m+1}\mathcal{A}_m(h,k)+\mathcal{B}_m(k,T)\right)
	\end{align*}
where $\mathcal{A}_m(h,k)$ and $\mathcal{B}_m(h,k,T)$ are defined by (\ref{definition_mathcal_A}) and (\ref{definition_mathcal_B}), respectively.
\end{lemma}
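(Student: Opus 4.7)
The plan is to treat the second and third terms of $\mathcal{F}_{\alpha,h,k}(T)$ separately, since the first term vanishes identically when $k > 1$ by the indicator $\mathbbm{1}_{k=1}$. Each of the remaining pieces admits a Taylor or Laurent expansion about $\alpha = 0$ whose coefficients are exactly what Lemma \ref{powerseries} delivers. Multiplying the coefficient of $\alpha^m$ by $m!$ (which gives the $m^{\text{th}}$ derivative at $0$) and by the relevant prefactors should recover $(-1)^{m+1}\mathcal{A}_m(h,k)$ from the second term and $\mathcal{B}_m(k,T)$ from the third.

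For the second term $-\Lambda(k)/(h^\alpha \Phi(1+\alpha,k))$, note that $\Lambda(k) = 0$ unless $k = p^a$, which is precisely when $\mathcal{A}_m(h,k) \neq 0$. Assuming $k = p^a$, I would expand $h^{-\alpha} = \sum_{u_1 \ge 0} (-1)^{u_1}(\log^{u_1} h)\alpha^{u_1}/u_1!$ and use \eqref{ps1} to expand $\Phi(1+\alpha,p^a)^{-1}$, then take the Cauchy product and extract the coefficient of $\alpha^m$. After multiplying by $m!$ and by $-\log p$, collecting the sign $(-1)^{u_1+u_2} = (-1)^m$ across the convolution produces precisely $(-1)^{m+1}\mathcal{A}_m(h,k)$, with the isolated $\log^m h \log p$ term in \eqref{definition_mathcal_A} coming from the $\mathbbm{1}_{u_2=0}$ contribution in \eqref{ps1}.

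For the third term $-(k/\varphi(k))\Phi(\alpha,k)\zeta(1-\alpha)(T/2\pi k)^{-\alpha}/(1-\alpha)$, the key observation is that Cauchy-multiplying \eqref{Laurent} with the geometric series $\sum_{\ell \ge 0}\alpha^\ell$ gives the Laurent expansion
\begin{equation*}
\frac{\zeta(1-\alpha)}{1-\alpha} \;=\; \sum_{u_3 \ge -1}\Bigl(\sum_{j=-1}^{u_3}\tilde{\gamma}_j\Bigr)\alpha^{u_3},
\end{equation*}
which is exactly the inner sum appearing in \eqref{definition_mathcal_B}. Combined with $(T/2\pi k)^{-\alpha} = \sum_{u_2 \ge 0}(-1)^{u_2}\log^{u_2}(T/2\pi k)\alpha^{u_2}/u_2!$ and the expansion \eqref{ps2} of $\Phi(\alpha,k)$, which begins at $\alpha^{\omega(k)}$ and cancels the pole of $\zeta(1-\alpha)$, the triple Cauchy product yields the coefficient of $\alpha^m$ as a sum over $u_1 + u_2 + u_3 = m$ with $u_1 \ge \omega(k)$, $u_2 \ge 0$, $u_3 \ge -1$. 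Performing the reindexing $u_1 \mapsto u_1 + 1$ (so $u_1 \ge \omega(k)-1$) and rewriting the constraint as $u_3 = m - u_1 - u_2 - 1$ with $u_1 + u_2 \le m$ puts the expression into the exact form of \eqref{definition_mathcal_B}; the prefactor $(-1)^{\omega(k)}$ from \eqref{ps2} combines with $-k/\varphi(k)$ and the $m!$ from differentiation to give $k(-1)^{\omega(k)+1}m!/\varphi(k)$, matching $\mathcal{B}_m(k,T)$ on the nose.

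The vanishing condition $\omega(k) \ge m+2 \Rightarrow \mathcal{B}_m(k,T) = 0$ falls out automatically: the joint constraint $\omega(k)-1 \le u_1 \le m$ is empty whenever $\omega(k) \ge m+2$, so the triple convolution contributes nothing at $\alpha^m$. The entire argument is essentially bookkeeping around Lemma \ref{powerseries} rather than analysis, and the main obstacle I anticipate is purely combinatorial: tracking the three sign factors ($(-1)^{u_1}$ from $h^{-\alpha}$ or from $(T/2\pi k)^{-\alpha}$, $(-1)^{u_2}$ from \eqref{ps2}, and $(-1)^{\omega(k)}$ from the outer factor) and the index shift $u_1 \mapsto u_1+1$ without dropping or duplicating boundary terms, particularly the $j = -1$ contribution from the pole of $\zeta(1-\alpha)$.
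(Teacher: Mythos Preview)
Your proposal is correct and follows essentially the same approach as the paper's proof. The only cosmetic difference is in the third term: you first combine $\zeta(1-\alpha)/(1-\alpha)$ into a single Laurent series and then multiply by $(T/2\pi k)^{-\alpha}$ and $\Phi(\alpha,k)$, whereas the paper first pairs $\Phi(\alpha,k)\zeta(1-\alpha)$ (absorbing one factor of $\alpha$ from the $\Phi$ expansion to cancel the pole, writing the result as $(-1)^{\omega(k)}\bigl(\sum_{u\ge\omega(k)-1}\mathcal{G}_{u+1}(k)\alpha^u\bigr)\bigl(\sum_{w\ge 0}\tilde{\gamma}_{w-1}\alpha^w\bigr)$) and then multiplies by the remaining two series, summing over a four-fold index $u_1+u_2+u_3+u_4=m$ before collapsing the $u_2,u_4$ sum into $\sum_{j=0}^{m-u_1-u_3}\tilde{\gamma}_{j-1}$; both routes are the same four-way Cauchy product grouped differently and arrive at the identical expression for $\mathcal{B}_m(k,T)$.
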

\begin{proof}
	Let $0<\left|\alpha\right|<1$. For $k>1$, we have 
\begin{align}\label{Curly_F_k>1}
		\mathcal{F}_{\alpha,h,k}(T)=  \frac{T}{2 \pi} 
	\left( -\Lambda(k)
		h^{-\alpha} \Phi(1+\alpha,k)^{-1}
		-\frac{k}{\varphi(k)}\Phi(\alpha,k)\zeta(1-\alpha)\frac{\left(\frac{T}{2\pi k}\right)^{-\alpha}}{1-\alpha} \right).
\end{align}	
Due to the factor $\Lambda(k)$ in the first term inside the parenthesis above, it is sufficient to consider when $k$ is a prime power. Let $k=p^{a}$ where $p$ is a prime number and $a\in\mathbb{N}$.

By the first assertion in Lemma \ref{powerseries}, we have 
 \begin{align*}
	h^{-\alpha} \Phi(1+\alpha,p^a)^{-1}
	& = \sum_{u=0}^{\infty} A_u \alpha^u
\end{align*}
where 
\begin{align*}
		A_u & =   \sum_{\substack{u_1+u_2=u\\u_1,u_2\geqslant 0}} \frac{1}{u_1!} (-1)^{u_1} (\log h)^{u_1} \frac{1}{u_2!}
	 \left(\mathbbm{1}_{ u_2=0}+(-1)^{u_2}  (\log^{u_2} p)\pl_{-u_2}\left(\frac{1}{p}\right)\right)  
		\\& = \frac{(-1)^u}{u!}   \left(\log^u h +  \sum_{\substack{u_1+u_2=u\\u_1,u_2\geqslant 0}} \binom{u}{u_1} (\log^{u_1} h)
		(\log^{u_2} p)\pl_{-u_2}\left(\frac{1}{p}\right)  \right).
\end{align*}

Comparing this with  the definition of $\mathcal{A}_m(h,k)$ in (\ref{definition_mathcal_A}),  we find that
	\begin{align}\label{derivative_of_the_first_term}
			\frac{d^{m}}{d\alpha^m} 	\left(-\Lambda(k)h^{-\alpha} \Phi(1+\alpha,k)^{-1}\right)  \Biggr|_{\alpha=0}=(-1)^{m+1}	\mathcal{A}_m(h,k).
	\end{align}
Now we consider the second term inside the parenthesis in (\ref{Curly_F_k>1}). 
By Lemma \ref{powerseries} and \eqref{Laurent}, we have 
\begin{align*} 
\Phi(\alpha,k)\zeta(1-\alpha)
&=(-1)^{\omega(k)}\left(	\sum_{u={\omega(k)-1}}^{\infty}
\mathcal{G}_{u+1}(k)
\alpha^{u}	\right)\left(\sum_{u=0}^{\infty}\tilde{\gamma}_{u-1}\alpha^u\right).
\end{align*}
From the power series
\begin{equation}
	\left(\frac{T}{2\pi k}\right)^{-\alpha} = \sum_{u=0}^{\infty} \frac{(-1)^u \log^u (\frac{T}{2 \pi k}) }{u!} \alpha^u
	\text{ and }
	\frac{1}{1-\alpha}=\sum_{u=0}^{\infty}\alpha^u,
\end{equation}
	we have 
\begin{align*}
\Phi(\alpha,k)\zeta(1-\alpha)\frac{\left(\frac{T}{2\pi k}\right)^{-\alpha}}{1-\alpha} 
=(-1)^{\omega(k)}\sum_{u=\omega(k)-1}^{\infty}B_u\alpha^{u}
\end{align*}	
	where, for $u\geqslant \omega(k)-1$, we define
	\begin{align*}
		B_u&:= \sum_{\substack{u_1+u_2+u_3+u_4=u\\u_1\geqslant \omega(k)-1\\u_2,u_3,u_4\geqslant 0}} \mathcal{G}_{u_1+1}(k)\tilde{\gamma}_{u_2-1}\frac{(-1)^{u_3}}{u_3!}\log^{u_3}\left(\frac{T}{2\pi k}\right)
		\\&=\sum_{\substack{u_1+u_2\leqslant u\\u_1\geqslant \omega(k)-1\\u_2\geqslant 0}} \mathcal{G}_{u_1+1}(k)\frac{(-1)^{u_2}}{u_2!}\log^{u_2}\left(\frac{T}{2\pi k}\right)\sum_{j=0}^{u-u_1-u_2}\tilde{\gamma}_{j-1}.
	\end{align*}
Recalling the definition of $\mathcal{B}_m(k,T)$ in (\ref{definition_mathcal_B}), it follows that

	\begin{align}\label{derivative_second_term}
	\frac{d^{m}}{d\alpha^m} \left(-\frac{k}{\varphi(k)}\Phi(\alpha,k)\zeta(1-\alpha)\frac{\left(\frac{T}{2\pi k}\right)^{-\alpha}}{1-\alpha}\right)   \Biggr|_{\alpha=0}=	\mathcal{B}_m(k,T)
	\end{align}
	since the derivative above is zero if $\omega(k)>m+1.$
	By (\ref{derivative_of_the_first_term}) and (\ref{derivative_second_term}), we obtain the desired result.	
\end{proof}

\begin{lemma} \label{errorbounds}
Let $\alpha\in\mathbb{C}$ such that $\left|\alpha\right|\leqslant \frac{1}{15\log T}$ and $s_{-\alpha}(n)=n^{-\alpha}$. Let $\kappa=1+\frac{1}{\log T}$.  For sequences $x(n)$ and $y(n)$ supported on $n\leqslant N\ll T^{\vartheta}$ for some $\vartheta<1/2$, we have 
\begin{align}
 \sum_{n= 1}^{\infty}\frac{\left|\left(s_{-\alpha}\ast x\right) (n)\right|^2}{n^{2\kappa-1}} & \ll N\log T\left\|\frac{x(n)}{n} \right\|_1^2,  
 \label{bound1} \\
 \sum_{n\leqslant N}\frac{\left|y(n)\right|^2}{n^{1-2\kappa}} & \ll N\left\|y \right\|_1^2,
 \label{bound2} \\
 \sum_{n= 1}^{\infty}\frac{\left|\left(\Lambda \ast s_{-\alpha}\ast x\right) (n)\right|^2}{n^{2\kappa-1}}  
 & \ll  N\log^3 T\left\| \frac{x(n)}{n} \right\|_1^2 \label{bound3}. 
\end{align}

\end{lemma}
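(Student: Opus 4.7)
The bound (\ref{bound2}) is immediate: since $n\le N \ll T^{\vartheta}$ and $2\kappa-1 = 1 + 2/\log T$, we have $n^{2\kappa-1} = n\cdot n^{2/\log T} \ll n$, so
\begin{align*}
\sum_{n\le N}\frac{|y(n)|^2}{n^{1-2\kappa}} \ll N\sum_{n\le N}|y(n)|^2 \le N\Big(\sum_{n\le N}|y(n)|\Big)^2 = N\|y\|_1^2.
\end{align*}

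For (\ref{bound1}), the plan is to expand the modulus squared, interchange summations, and evaluate the inner sum in $n$ via the zeta function. After squaring $(s_{-\alpha}*x)(n) = \sum_{b|n,\,b\le N} x(b)(n/b)^{-\alpha}$ and setting $n = [b_1,b_2]m$,
\begin{align*}
\sum_{n=1}^{\infty}\frac{|(s_{-\alpha}*x)(n)|^2}{n^{2\kappa-1}} = \zeta(2\kappa-1+2\Re\alpha)\sum_{b_1,b_2\le N}\frac{x(b_1)\overline{x(b_2)}\,(b_1,b_2)^{2\kappa-1+\alpha+\bar\alpha}}{b_1^{2\kappa-1+\bar\alpha}b_2^{2\kappa-1+\alpha}}.
\end{align*}
Since $2\kappa-1+2\Re\alpha = 1+O(1/\log T)$, the zeta factor is $\ll \log T$. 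The assumptions $|\alpha|\le 1/(15\log T)$ and $N\ll T^{\vartheta}$ make the $O(1/\log T)$-order shifts in the exponents contribute only bounded factors, leaving $\ll \log T\sum_{b_1,b_2}|x(b_1)x(b_2)|(b_1,b_2)/(b_1b_2)$. Using $(b_1,b_2)\le N$ gives the claimed $\ll N\log T\left\|\tfrac{x(n)}{n}\right\|_1^2$.

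For (\ref{bound3}), I plan to decouple the outer $\Lambda$-convolution via Cauchy--Schwarz, thereby reducing to the previous case. Setting $h = s_{-\alpha}*x$ and using $\sum_{a|n}\Lambda(a)=\log n$,
\begin{align*}
|(\Lambda*h)(n)|^2 \le \Big(\sum_{a|n}\Lambda(a)\Big)\Big(\sum_{a|n}\Lambda(a)|h(n/a)|^2\Big) = (\log n)(\Lambda*|h|^2)(n).
\end{align*}
Splitting $\log n=\log a+\log b$ under $n=ab$ then bounds $\sum_n|(\Lambda*h)(n)|^2/n^{2\kappa-1}$ by
\begin{align*}
\Big(\sum_a\frac{\Lambda(a)\log a}{a^{2\kappa-1}}\Big)\Big(\sum_b\frac{|h(b)|^2}{b^{2\kappa-1}}\Big) + \Big(\sum_a\frac{\Lambda(a)}{a^{2\kappa-1}}\Big)\Big(\sum_b\frac{\log b\,|h(b)|^2}{b^{2\kappa-1}}\Big).
\end{align*}
At $s=2\kappa-1$ the two $\Lambda$-sums are $\ll \log T$ and $\ll \log^2 T$, respectively, from the singular behavior of $-\zeta'/\zeta$ and $\zeta''/\zeta-(\zeta'/\zeta)^2$ near $s=1$. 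Bound (\ref{bound1}) supplies the first $b$-sum. The remaining sum $\sum_b\log b\,|h(b)|^2/b^{2\kappa-1}$ is obtained by differentiating in $s$ the Dirichlet-series identity behind (\ref{bound1}); since $\log(b_1,b_2)$ and $\log b_j$ are each $\le \log N \ll \log T$, this differentiation costs at most one extra factor of $\log T$, giving $\ll N\log^2 T\left\|\tfrac{x(n)}{n}\right\|_1^2$. Assembling the four factors yields $\ll N\log^3 T\left\|\tfrac{x(n)}{n}\right\|_1^2$.

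The main obstacle is bound (\ref{bound3}): one must produce exactly the power $\log^3 T$ by balancing the singularities of $\zeta'/\zeta$ and its derivative against the differentiation of the Dirichlet series underlying $|h|^2$. An alternative route avoids Cauchy--Schwarz and instead expands $|(\Lambda*s_{-\alpha}*x)(n)|^2$ as a four-fold sum, handling the gcd via the inequality $(a_1c_1,a_2c_2)\le (a_1,a_2)(a_1,c_2)(c_1,a_2)(c_1,c_2)$; this bypasses differentiation but entails substantially more bookkeeping.
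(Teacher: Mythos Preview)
Your proofs of \eqref{bound1} and \eqref{bound2} are correct and essentially equivalent to the paper's; for \eqref{bound1} you organize things slightly more cleanly by identifying the factor $\zeta(2\kappa-1+2\Re\alpha)$ explicitly before invoking $(b_1,b_2)\le N$, whereas the paper bounds termwise from the start, but the substance is the same.

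For \eqref{bound3} you take a genuinely different route. The paper expands $|(\Lambda*s_{-\alpha}*x)(n)|^2$ directly into a four-fold sum over $d_1,d_2,d_3,d_4$ (with $\Lambda$ on $d_1,d_3$ and $x$ on $d_2,d_4$), and then controls $(d_1d_2,d_3d_4)$ by splitting according to whether the prime-power supports of $d_1$ and $d_3$ share a prime: when they do not one uses the crude bound $(d_1d_2,d_3d_4)\le N$, and when $d_1=p^a$, $d_3=p^b$ one does an explicit calculation with $\min(p^a,p^b)$. This is exactly the ``alternative route'' you sketch at the end. Your Cauchy--Schwarz step $|(\Lambda*h)(n)|^2\le(\log n)(\Lambda*|h|^2)(n)$ is more economical: it factors the problem into standard Dirichlet-series data for $\Lambda$ and the sum already handled in \eqref{bound1} (together with its $s$-derivative), so the case analysis on prime powers is bypassed entirely. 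The price is that one must track the extra $\log$-powers carefully, which you do.

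One small remark on your exposition of the differentiated sum $\sum_b(\log b)|h(b)|^2/b^{2\kappa-1}$: you justify the extra factor of $\log T$ by $\log(b_1,b_2),\log b_j\le\log N\ll\log T$, but the $s$-derivative also hits the zeta factor, producing $\zeta'(2\kappa-1+2\Re\alpha)\asymp\log^2 T$. Since the undifferentiated $\zeta$-factor was already $\asymp\log T$, this too amounts to one extra factor of $\log T$, and your stated bound $\ll N\log^2 T\left\|x(n)/n\right\|_1^2$ is correct; the argument should just mention both sources.
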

\begin{proof}
Since $\left|\alpha\right|\leqslant \frac{1}{15\log T}$, we  have 
\begin{align*}
	\sum_{n= 1}^{\infty}&\frac{\left|\left(s_{-\alpha}\ast x\right) (n)\right|^2}{n^{2\kappa-1}} 
	=\sum_{n=1}^{\infty}\frac{1}{n^{1+\frac{2}{\log T}}}\sum_{d_1,d_2\mid n}\left(n/d_1\right)^{-\alpha}\left(n/d_2\right)^{-\overline{\alpha}}x\left(d_1\right)\overline{x(d_2)}
	\\&\leqslant  \sum_{d_1,d_2\leqslant N}\left|x(d_1)x(d_2)\right|\sum_{\substack{n=1\\ d_1,d_2\mid n}}^{\infty}\frac{\left(\frac{n^2}{d_1d_2}\right)^{\frac{1}{2\log T}}}{n^{1+\frac{2}{\log T}}}
	\leqslant \sum_{d_1,d_2\leqslant N}\left|x(d_1)x(d_2)\right|\sum_{\substack{n=1\\ d_1,d_2\mid n}}^{\infty}\frac{1}{n^{1+\frac{1}{\log T}}}
	\\&= \sum_{d_1,d_2\leqslant N}\left|x(d_1)x(d_2)\right|\left(\frac{(d_1,d_2)}{d_1d_2}\right)^{1+\frac{1}{\log T}}\sum_{k=1}^{\infty}\frac{1}{k^{1+\frac{1}{\log T}}}
	\ll  N\log T\left\|\frac{x(n)}{n} \right\|_1^2. 
\end{align*}
Since $N\leqslant T^{\vartheta}$, we have 
\begin{align*}
	\sum_{n\leqslant N}\frac{\left|y(n)\right|^2}{n^{1-2\kappa}}=\sum_{n\leqslant N}\left|y(n)\right|^2 n^{1+\frac{2}{\log T}}\ll \sum_{n\leqslant N}\left|y(n)\right|^2 n&\leqslant \left(\sum_{n\leqslant N}\left|y(n)\right|n^{1/2}\right)^2
	\leqslant N\left\|y \right\|_1^2.
\end{align*}
Finally, we have
\begin{align*}
	\nonumber\sum_{n= 1}^{\infty}\frac{\left|\left(\Lambda \ast s_{-\alpha}\ast x\right) (n)\right|^2}{n^{2\kappa-1}} 
	&\nonumber \leqslant \sum_{n=1}^{\infty}\frac{1}{n^{1+\frac{2}{\log T}}}\sum_{\substack{d_1,d_2,d_3,d_4\\d_1\mid n, \, d_2\mid n/d_1\\d_3\mid n,\, d_4\mid n/d_3}}\Lambda(d_1) \Lambda(d_3) \left|x(d_2)x(d_4)\right|\left(\frac{n^2}{d_1d_2d_3d_4}\right)^{1/2\log T}
	\\&\nonumber\leqslant \sum_{d_2,d_4\leqslant N}\left|x(d_2)x(d_4)\right|\sum_{d_1=1}^{\infty}\Lambda(d_1)\sum_{d_3=1}^{\infty}\Lambda(d_3)\sum_{\substack{n=1\\d_1d_2\mid n\\ d_3d_4\mid n}}^{\infty}\frac{1}{n^{1+\frac{1}{\log T}}}.
	\end{align*}
For given $d_1,d_2,d_3,d_4$, with $d_2,d_4\leq N$, we have 
\begin{align*}
\sum_{\substack{n=1\\d_1d_2\mid n\\ d_3d_4\mid n}}^{\infty}\frac{1}{n^{1+\frac{1}{\log T}}}= \frac{(d_1d_2,d_3d_4)}{(d_1d_2d_3d_4)^{1+\frac{1}{\log T}}}\sum_{k=1}^{\infty}\frac{1}{k^{1+\frac{1}{\log T}}}\leqslant \log T \frac{(d_1d_2,d_3d_4)}{(d_1d_2d_3d_4)^{1+\frac{1}{\log T}}}
\end{align*}
and thus
\begin{align}\label{upperboundwithlambda}
\sum_{n= 1}^{\infty}\frac{\left|\left(\Lambda \ast s_{-\alpha}\ast x\right) (n)\right|^2}{n^{2\kappa-1}} \ll \log T  \sum_{d_2,d_4\leqslant N}\left|x(d_2)x(d_4)\right|\sum_{d_1=1}^{\infty}\Lambda(d_1)\sum_{d_3=1}^{\infty}\Lambda(d_3)\frac{(d_1d_2,d_3d_4)}{(d_1d_2d_3d_4)^{1+\frac{1}{\log T}}}.
\end{align}
Since $\Lambda(d_1)$ and $\Lambda(d_3)$ are supported on prime powers and $d_2,d_4\leqslant N$, we have 
\begin{align}\label{upperboundforthesummand}
	\Lambda(d_1)\Lambda(d_3)\frac{(d_1d_2,d_3d_4)}{(d_1d_2d_3d_4)^{1+\frac{1}{\log T}}}&\leqslant \Lambda(d_1)\Lambda(d_3)\frac{N}{(d_1d_2d_3d_4)^{1+\frac{1}{\log T}}}
	+\mathbbm{1}_{\substack{d_1=p^{a}\\d_3=p^b}}\log^{2}p \frac{N\min\lbrace p^{a}, p^{b}\rbrace }{(d_2d_4p^ap^b)^{1+\frac{1}{\log T}}}.
\end{align}
The contribution of the first term above to the right hand side of (\ref{upperboundwithlambda}) is bounded above by
\begin{align*}
	&\log T	\sum_{d_2,d_4\leqslant N}\left|x(d_2)x(d_4)\right|\sum_{d_1=1}^{\infty}\sum_{d_3=1}^{\infty} \Lambda(d_1)\Lambda(d_3)\frac{N}{(d_1d_2d_3d_4)^{1+\frac{1}{\log T}}}
	\ll N\log^3 T \left\| \frac{x(n)}{n} \right\|_1^2.
\end{align*}
The contribution of the second term in (\ref{upperboundforthesummand}) to the right hand side of (\ref{upperboundwithlambda}) is bounded above by
\begin{align*}
&N\log T\left\| \frac{x(n)}{n} \right\|_1^2 \sum_{\substack{p, a\geqslant1}}\frac{\log^2p}{p^{a\left(1+\frac{1}{\log T}\right)}}\left( \sum_{1\leqslant b \leqslant a} \frac{p^b}{p^{b\left(1+\frac{1}{\log T}\right)}}+p^{a}\sum_{b\geqslant a+1}\frac{1}{p^{b\left(1+\frac{1}{\log T}\right)}}\right)
	\\&\ll N\log T\left\| \frac{x(n)}{n} \right\|_1^2 \sum_{\substack{p, a\geqslant1}}\frac{\log^2p}{p^{a\left(1+\frac{1}{\log T}\right)}} \left( a+\frac{1}{p} \right)
	\\&\ll  N\log T\left\| \frac{x(n)}{n} \right\|_1^2\sum_{n=1}^{\infty}\frac{\Lambda(n)\log n}{n^{1+\frac{1}{\log T}}}
	\ll N\log^3 T\left\| \frac{x(n)}{n} \right\|_1^2. 
\end{align*}
\end{proof}

\begin{lemma}\label{lemma_for__moment_corollary}
	Let $k,u\geqslant 1$ be natural numbers. We have
	\begin{align*}
		\sum_{j=1}^{u}\sum_{\substack{p_1,p_2\dots, p_j\\p_1^{a_1}\dots p_j^{a_j}\leqslant N}}\frac{\tau_{k}\left(p_1^{a_1}\dots p_j^{a_j}\right)}{p_1^{a_1}\dots p_j^{a_j}}\left(\log p_1+\log p_2+\dots +\log p_j\right)^{u}\ll \log^{u} N
	\end{align*}
	where the sum over $p_1,\dots, p_j$ runs over distinct prime numbers and  the implied constant depends on $k$ and $u$. 
\end{lemma}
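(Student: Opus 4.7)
The plan is to expand $(\log p_1 + \cdots + \log p_j)^u$ by the multinomial theorem and reduce the bound to a product of one-variable Mertens-type sums. Specifically, writing
\[
(\log p_1 + \cdots + \log p_j)^u = \sum_{\substack{\ell_1+\cdots+\ell_j=u\\ \ell_i\geq 0}} \binom{u}{\ell_1,\ldots,\ell_j}\prod_{i=1}^j(\log p_i)^{\ell_i},
\]
and substituting into the sum, it suffices to bound, for each fixed tuple $(\ell_1,\ldots,\ell_j)$ with $\sum_i\ell_i = u$ and each $1\leqslant j\leqslant u$, the resulting inner sum. The number of such tuples and the multinomial coefficients depend only on $u$.

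Next, I would relax the joint constraint $p_1^{a_1}\cdots p_j^{a_j}\leqslant N$ (and drop distinctness) to the individual constraints $p_i^{a_i}\leqslant N$, which only enlarges the sum and causes it to factor as $\prod_{i=1}^j U_{\ell_i}(N)$, where
\[
U_\ell(N) := \sum_{\substack{p\ \text{prime},\,a\geqslant 1\\p^a\leqslant N}}\frac{\tau_k(p^a)(\log p)^\ell}{p^a}.
\]
Isolating the dominant $a=1$ contribution and using $\tau_k(p^a)\ll_k a^{k-1}$ to bound the $a\geqslant 2$ tail by an absolutely convergent sum in $p$, partial summation against the prime number theorem gives $U_\ell(N)\ll_k(\log N)^\ell$ for $\ell\geqslant 1$, while Mertens's theorem gives $U_0(N)\ll\log\log N$. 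When every $\ell_i\geqslant 1$, the product $\prod_i U_{\ell_i}(N)\ll_k(\log N)^u$, and summing over tuples and over $j$ yields the claimed estimate in this case.

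The main obstacle is the tuples in which some $\ell_i=0$, where the naive relaxation produces an extraneous factor of $(\log\log N)^{\#\{i:\ell_i=0\}}$. To remove it I would retain the joint constraint on the ``free'' coordinates: grouping the active indices $\mathcal{A}=\{i:\ell_i\geqslant 1\}$ and letting $Q = \prod_{i\in\mathcal{A}} p_i^{a_i}$, each free prime power must satisfy $p^a\leqslant N/Q$. Summing the active variables first against $\prod_{i\in\mathcal{A}}(\log p_i)^{\ell_i}/p_i^{a_i}$ and noting $|\mathcal{A}|\geqslant 1$ (since $u\geqslant 1$), one trades part of the $(\log N)^{\ell_{i_0}}$ factor (for some fixed $i_0\in\mathcal{A}$) against the cumulative $\log\log(N/Q)$ factors coming from the free sums, producing a net bound of $\ll_{k,u}(\log N)^u$. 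Since $j\leqslant u$, the number of such bookkeeping cases is bounded solely in terms of $u$, so the implicit constant remains admissible.
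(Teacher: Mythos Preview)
Your approach differs from the paper's. The paper argues by induction on $u$: for the terms with $1\le j\le u$ it uses the trivial inequality $(\sum_i\log p_i)^{u+1}\le(\log N)(\sum_i\log p_i)^u$ together with the induction hypothesis, and only for the single top term $j=u+1$ does it expand via the multinomial theorem and factor into one-variable prime sums. You instead expand by the multinomial theorem directly for every $j$. Both routes land on the same one-variable building blocks $U_\ell(N)=\sum_{p^a\le N}\tau_k(p^a)(\log p)^\ell/p^a$.

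You correctly isolate the genuine obstruction: tuples with some $\ell_i=0$ bring in factors $U_0(N)\asymp k\log\log N$, which are not $O_{k,u}(1)$. Your proposed ``trade'' does not remove them. Retaining the joint constraint on the free coordinates only replaces $\log\log N$ by $\log\log(N/Q)\le\log\log N$, and there is no slack in the active factors $(\log N)^{\ell_{i_0}}$ to absorb a divergent $(\log\log N)^r$. Concretely, for $u=j=2$ the tuple $(\ell_1,\ell_2)=(2,0)$ alone contributes
\[
\sum_{\substack{p_1\ne p_2\\ p_1p_2\le N}}\frac{(\log p_1)^2}{p_1p_2}
\ \ge\ \sum_{p_1\le\sqrt{N}}\frac{(\log p_1)^2}{p_1}\sum_{\substack{p_2\le\sqrt{N}\\ p_2\ne p_1}}\frac{1}{p_2}
\ \gg\ (\log N)^2\log\log N,
\]
so the stated bound $\ll(\log N)^2$ cannot hold; the lemma as written appears to need an extra factor $(\log\log N)^{O_u(1)}$. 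The paper's treatment of $S_2$ has exactly the same slip---it asserts $\sum_{p^a\le N}\tau_k(p^a)(\log p)^{j_i}/p^a\le C_{j_i,k}(\log N)^{j_i}$, which fails for $j_i=0$---so your proposal is no worse than the paper's proof here, but the gap is real in both and cannot be closed by the mechanism you sketch.
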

\begin{proof}
	We use induction $u\geqslant 1$. For the base case when $u=1$, we have 
	\begin{align*}
		\sum_{p^a\leqslant N}\frac{\tau_{k}(p^a)}{p^a}\log p\ll\log N
	\end{align*} 
	since the main contribution comes from the case when $a=1$ in which case $\tau_k(p)=k$. Assume that the desired bound holds for some $u\geqslant 1$. Let 
	\begin{align*}
		\sum_{j=1}^{u+1}\sum_{\substack{p_1,p_2\dots, p_j\\p_1^{a_1}\dots p_j^{a_j}\leqslant N}}\frac{\tau_{k}\left(p_1^{a_1}\dots p_j^{a_j}\right)}{p_1^{a_1}\dots p_j^{a_j}}\left(\log p_1+\log p_2+\dots +\log p_j\right)^{u+1}=S_1+S_2
	\end{align*}
	where $S_1$ denotes the sum of the terms $1 \le j \le u$ and $S_2$ is the term corresponding to $j=u+1$. 

	We have 
	\begin{align*}
		S_1&\ll u\log N 	\sum_{j=1}^{u}\sum_{\substack{p_1,p_2\dots, p_j\\p_1^{a_1}\dots p_j^{a_j}\leqslant N}}\frac{\tau_{k}\left(p_1^{a_1}\dots p_j^{a_j}\right)}{p_1^{a_1}\dots p_j^{a_j}}\left(\log p_1+\log p_2+\dots +\log p_j\right)^{u}
		\ll \log^{u+1}N
	\end{align*}
	by the induction hypothesis. For $S_2$, we have 
	\begin{align*}
		S_2& \leqslant\sum_{\substack{j_1+\dots +j_{u+1}=u+1\\j_1,\dots,j_{u+1}\geqslant 0}}\binom{u+1}{j_1, j_2, \dots, j_{u+1}}\prod_{i=1}^{u+1} \left(\sum_{p^a\leqslant N}\frac{\tau_{k}(p_i^a)}{p_i^a}\log^{j_{i}}p_i\right) 
		\\& \ll \sum_{\substack{j_1+\dots +j_{u+1}=u+1\\j_1,\dots,j_{u+1}\geqslant 0}}\binom{u+1}{j_1, j_2, \dots, j_{u+1}}\prod_{i=1}^{u+1}C_{j_i,k}\log^{j_i}N
		\ll \log^{u+1}N
	\end{align*}
	where the implied constant depends on $k$ and $u$. Combining the upper bounds for $S_1$ and $S_2$ finishes the proof.
\end{proof}

\section{Proof of Theorem \ref{mainthm}}

Let $T$ be a large real number and without loss of generality assume that $T$ satisfies the condition in \eqref{eq:Tcondition}. Let $\kappa:=1+\frac{1}{\log T}$ and $\mathscr{C}$ be the positively oriented rectangle with vertices at $1-\kappa+i,\kappa+i,\kappa+iT$ and $1-\kappa+iT$. Then by the functional equation $\zeta(s+\alpha) = \chi(s+\alpha) \zeta(1-s-\alpha)$ and the  residue theorem, we have 
\begin{align*}
	\mathcal{S} :=S(\alpha,T,X,Y)&=
	\sum_{0 < \gamma \leq T} \zeta(\rho+\alpha)X(\rho) Y(1\!-\! \rho)\\& = -  \frac{1}{2 \pi i} \int_{\mathscr{C}} \frac{\zeta'}{\zeta}(1\!-\!s) \chi(s+\alpha) \zeta(1\!-\! s-\alpha) X(s) Y(1\!-\! s) \, ds.
\end{align*}
By the convexity bounds for the Riemann zeta function and the assumption that $\left|\alpha\right|\ll \frac{1}{\log T}$, we have $\chi(s+\alpha) \zeta(1-s-\alpha)=\zeta(s+\alpha)\ll T^{\frac{1}{2}+\varepsilon}$ for $s$ lying on the horizontal parts of the contour $\mathscr{C}$.  Since $T$ satisfies (\ref{eq:Tcondition}), the contribution of the horizontal parts of the integral above is 
\begin{align}\label{horizontal contribution}
	\nonumber\ll T^{\frac{1}{2}+\varepsilon} \sum_{n,m\leqslant N}\frac{\left|x(n)y(m)\right|}{m}\int_{-\frac{1}{\log T}}^{1+\frac{1}{\log T}}\left(\frac{n}{m}\right)^{-\sigma}\, d\sigma
	&\ll T^{\frac{1}{2}+\varepsilon} \sum_{n,m\leqslant N}\frac{\left|x(n)y(m)\right|}{m}\left( 1+\frac{m}{n} \right)
	\\	&\ll T^{\frac{1}{2}+\varepsilon} \left(\left\| x \right\|_{1}\left\| \frac{y(n)}{n} \right\|_{1}+\left\| y \right\|_{1}\left\| \frac{x(n)}{n} \right\|_{1}\right).
\end{align}
Let $\mathcal{S}_R$ and $\mathcal{S}_L$	denote the integrals over the right-hand side and the left-hand side of the contour $\mathcal{C}$, respectively. We have 
\begin{align}\label{SequalsSRSLerror}
	\mathcal{S}=\mathcal{S}_R+\mathcal{S}_L+O\left( \textswab{e}_1\right)
\end{align}
where $\textswab{e}_1$ is a function of $T$ with 
\begin{equation}
  \label{SRerror}
\textswab{e}_1\ll T^{\frac{1}{2}+\varepsilon} \left(\left\| x \right\|_{1}\left\| \frac{y(n)}{n} \right\|_{1}+\left\| y \right\|_{1}\left\| \frac{x(n)}{n} \right\|_{1}\right).
\end{equation}
We consider $	\mathcal{S}_R$ by using 
\begin{align}\label{logarithmicderivativeofthefunctionaleq}
	\frac{\zeta'}{\zeta}(1\!-\! s) = \frac{\chi'}{\chi}(s) - \frac{\zeta'}{\zeta}(s)
	=- \log \Big( \frac{|t|}{2 \pi} \Big)- \frac{\zeta'}{\zeta}(s) + O\left(|t|^{-1}\right).
\end{align}
Since \begin{align*}\chi(s+\alpha) \zeta(1-s-\alpha)=\zeta(s+\alpha)\ll\zeta\left(\kappa+\Re(\alpha)\right)\ll \zeta\left(1+\frac{1}{2\log T}\right)\ll \log T 
\end{align*} 
for $s=\kappa+it$, $1\leqslant t \leqslant T$	 and $\left|\alpha\right|\leqslant \frac{1}{15\log T}$, 
the contribution of the error term in (\ref{logarithmicderivativeofthefunctionaleq}) to $\mathcal{S}_R$ is 
\begin{align}\label{errorinSR}
	\ll \log T\sum_{n,m\leqslant N}\frac{\left|x(n)y(m)\right|}{n} \int_{1}^{T}\frac{dt}{t} \ll \log^2 T \left\| y \right\|_{1}\left\| \frac{x(n)}{n} \right\|_{1}
\end{align}
which is absorbed by the error term $\textswab{e}_1$ in (\ref{SequalsSRSLerror}). 
The contribution of the first term on the right-hand side of (\ref{logarithmicderivativeofthefunctionaleq}) to $\mathcal{S}_R$ is 
\begin{align}\label{firstcontributiontoSr}
	\frac{1}{2 \pi } \int_{1}^{T} \log\left(\frac{t}{2\pi}\right) \zeta(\kappa+it+\alpha) X(\kappa+it) Y(1-\kappa-it) \, dt.
\end{align}
Recalling that $x(n)=y(n)=0$ for $n>N$, we apply Lemma \ref{smoothmv} where  we take $	g(t)=\log\left(t/2\pi\right), a_n=\frac{\left(s_{-\alpha}\ast x\right) (n)}{n^{\kappa}}$ and $ b_n=\frac{y(n)}{n^{1-\kappa}}$. Since $	\int_{1}^{T}\log \left(\frac{t}{2\pi}\right)\, dt=T\log\left(\frac{T}{2\pi e}\right)+\log(2\pi e)$, the integral in (\ref{firstcontributiontoSr} is 
\begin{align}\label{firstterminSR}
	\nonumber&\frac{T}{2\pi}\log\left(\frac{T}{2\pi e}\right)\sum_{n\leqslant N}\frac{\left(s_{-\alpha}\ast x\right) (n)y(n)}{n}+O\left(\left|\sum_{n\leqslant N}\frac{\left(s_{-\alpha}\ast x\right) (n)y(n)}{n}\right| \right)
	\\&\quad \quad +O\left( \log T \left( \sum_{n= 1}^{\infty}\frac{\left|\left(s_{-\alpha}\ast x\right) (n)\right|^2}{n^{2\kappa-1}} \right)^{1/2}  \left( \sum_{n\leqslant N}\frac{\left|y(n)\right|^2}{n^{1-2\kappa}} \right)^{1/2}           \right).
\end{align}
Since 
\begin{align*}
	\left|\sum_{n\leqslant N}\frac{\left(s_{-\alpha}\ast x\right) (n)y(n)}{n}\right|&\leqslant 	\left( \sum_{n\leqslant N}\frac{ \left|\left(s_{-\alpha}\ast x\right)(n)\right|^2 }{n^{2\kappa-1}}     \right)^{1/2}\left( \sum_{n\leqslant N}\frac{\left|y(n)\right|^2}{n^{3-2\kappa}}    \right)^{1/2}
	\\&\leqslant 	\left( \sum_{n\leqslant N}\frac{ \left|\left(s_{-\alpha}\ast x\right)(n)\right|^2 }{n^{2\kappa-1}}     \right)^{1/2}\left( \sum_{n\leqslant N}\frac{\left|y(n)\right|^2}{n^{1-2\kappa}}    \right)^{1/2},
\end{align*}
it suffices to bound the second error term in (\ref{firstterminSR}). 

By Lemma \ref{errorbounds}, \eqref{bound1} and \eqref{bound2}, the second error term in (\ref{firstterminSR}) is $$\ll N\left(\log T\right)^{3/2}\left\|\frac{x(n)}{n} \right\|_1\left\|y \right\|_1 \ll \textswab{e}_1$$
where $\textswab{e}_1$ is the error term in (\ref{SequalsSRSLerror}).

The contribution of the second term on the right-hand side of (\ref{logarithmicderivativeofthefunctionaleq}) to $\mathcal{S}_R$ is 
\begin{align*}
	\frac{1}{2 \pi } \int_{1}^{T} \frac{\zeta'}{\zeta}(\kappa+it)  \zeta(\kappa+it+\alpha) X(\kappa+it) Y(1-\kappa-it) \, dt.
\end{align*}
Applying Lemma \ref{smoothmv} again where we take $g(t)=1, a_n=\frac{\left(-\Lambda\ast s_{-\alpha}\ast x\right) (n)}{n^{\kappa}}, b_n=\frac{y(n)}{n^{1-\kappa}}$,  the integral above equals 
\begin{align}\label{secondterminSR}
	\nonumber -\frac{T}{2\pi}\sum_{n\leqslant N}&\frac{\left(\Lambda\ast s_{-\alpha}\ast x\right) (n)y(n)}{n}
	+O\left(\left|\sum_{n\leqslant N}\frac{\left(\Lambda\ast s_{-\alpha}\ast x\right) (n)y(n)}{n}\right|\right)
	\\& 	 +O\left(    \left(\sum_{n=1}^{\infty}\frac{\left|\left(\Lambda\ast s_{-\alpha}\ast x\right) (n)\right|^2}{n^{2\kappa-1}}  \right)^{1/2}    \left(\sum_{n\leqslant N}\frac{\left|y(n)\right|^2}{n^{1-2\kappa}}  \right)^{1/2}       \right).
\end{align}
By Lemma \ref{errorbounds}, we have
\begin{align*}
	\left|\sum_{n\leqslant N}\frac{\left(\Lambda\ast s_{-\alpha}\ast x\right) (n)y(n)}{n}\right|&\leqslant	\left( \sum_{n\leqslant N}\frac{ \left|\left(\Lambda\ast s_{-\alpha}\ast x\right)(n)\right|^2 }{n^{2\kappa-1}}     \right)^{1/2}\left( \sum_{n\leqslant N}\frac{\left|y(n)\right|^2}{n^{3-2\kappa}}    \right)^{1/2}
	\\&\leqslant 	\left( \sum_{n\leqslant N}\frac{ \left|\left(\Lambda\ast s_{-\alpha}\ast x\right)(n)\right|^2 }{n^{2\kappa-1}}     \right)^{1/2}\left( \sum_{n\leqslant N}\frac{\left|y(n)\right|^2}{n^{1-2\kappa}}    \right)^{1/2} \\
	& \ll  
	N(\log T)^{3/2} \left\|\frac{x(n)}{n} \right\|_1\left\|y \right\|_1 \ll \textswab{e}_1
\end{align*}
and thus the error terms in (\ref{secondterminSR}) are absorbed by $\textswab{e}_1$
which gives the following result for $\mathcal{S}_R$.
\begin{theorem} \label{SRtheorem}
	If $x(n)$ and $y(n)$ satisfy \eqref{xyinftyunc}, \eqref{submultiplicative}, 
$\alpha$ satisfies \eqref{alpha size condition} and $T$ satisfies \eqref{eq:Tcondition}, then 
\begin{align}\label{final_S_R}
	\mathcal{S}_R=\frac{T}{2\pi}\log\left(\frac{T}{2\pi e}\right)\sum_{n\leqslant N}\frac{\left(s_{-\alpha}\ast x\right) (n)y(n)}{n}-\frac{T}{2\pi}\sum_{n\leqslant N}\frac{\left(\Lambda\ast s_{-\alpha}\ast x\right) (n)y(n)}{n}+O\left(\textswab{e}_1\right)
\end{align} 
where $\textswab{e}_1$ is defined in \eqref{SRerror}.   
\end{theorem}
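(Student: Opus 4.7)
The proof essentially organizes the chain of manipulations already performed in the discussion preceding the theorem statement. The plan is to apply the logarithmic derivative of the functional equation \eqref{eq:dfe} on the right vertical edge $\sigma = \kappa$ together with the asymptotic expansion \eqref{eq:chistirling} for $\chi'/\chi$, using that $\chi(s+\alpha)\zeta(1-s-\alpha) = \zeta(s+\alpha)$ by \eqref{eq:fe}. This writes $\mathcal{S}_R$ as the sum of two mean values of Dirichlet polynomials along $\kappa + i[1,T]$, plus an error from the $O(|t|^{-1})$ remainder in $\chi'/\chi$. The latter is controlled by the crude bound $\zeta(\kappa+it+\alpha) \ll \log T$, which holds since $\Re(\kappa+\alpha) > 1 + \frac{1}{2\log T}$, and it gives a contribution of order $(\log T)^2 \|y\|_1 \|x(n)/n\|_1$ that is absorbed into $\textswab{e}_1$.

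Next, the first main integral
\[
\frac{1}{2\pi}\int_{1}^{T}\log(t/2\pi)\,\zeta(\kappa+it+\alpha)X(\kappa+it)Y(1-\kappa-it)\,dt
\]
is handled by expanding $\zeta(s+\alpha)X(s)$ as a Dirichlet series with coefficients $(s_{-\alpha}\ast x)(n)$ and invoking Lemma \ref{smoothmv} with $g(t) = \log(t/2\pi)$, $a_n = (s_{-\alpha}\ast x)(n)/n^{\kappa}$, $b_n = y(n)/n^{1-\kappa}$. The identity $\int_1^T \log(t/2\pi)\,dt = T\log(T/2\pi e) + \log(2\pi e)$ produces the first claimed main term, and the mean-value error in Lemma \ref{smoothmv}, together with the pointwise bound $|g(T)| + \int_1^T |g'(t)|\,dt \ll \log T$, is estimated via Lemma \ref{errorbounds} parts \eqref{bound1} and \eqref{bound2}. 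This gives an error of order $N(\log T)^{3/2}\|x(n)/n\|_1\|y\|_1$, which lies within $\textswab{e}_1$ thanks to \eqref{N vs T}.

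The second main integral, involving $\zeta'/\zeta(\kappa+it)$, is treated in parallel with $g(t) = 1$ and coefficients $a_n = -(\Lambda\ast s_{-\alpha}\ast x)(n)/n^{\kappa}$. The main obstacle here is controlling the $L^2$-norm of this triple convolution: the von Mangoldt factor introduces an additional $(\log T)^2$ gain beyond \eqref{bound1}, and this is precisely the content of Lemma \ref{errorbounds}\eqref{bound3}. Paired with \eqref{bound2}, the resulting error is again of size $N(\log T)^{3/2}\|x(n)/n\|_1\|y\|_1$, absorbed by $\textswab{e}_1$. One also needs to verify, via Cauchy--Schwarz, that the crude ``diagonal'' term $|\sum_{n \leq N}(\Lambda\ast s_{-\alpha}\ast x)(n)y(n)/n|$ that appears in the error is likewise bounded by the same quantity, which follows from a further application of \eqref{bound3} and \eqref{bound2}. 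Summing the two main contributions and collecting all errors into $\textswab{e}_1$ yields the formula \eqref{final_S_R}.
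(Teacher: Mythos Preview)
Your proposal is correct and follows essentially the same approach as the paper: you apply \eqref{eq:dfe} and \eqref{eq:chistirling} to split $\mathcal{S}_R$ into the $\log(t/2\pi)$ and $\zeta'/\zeta$ pieces plus an $O(|t|^{-1})$ remainder, then evaluate each piece via Lemma~\ref{smoothmv} with the same choices of $g$, $a_n$, $b_n$, bounding the errors through Lemma~\ref{errorbounds} and Cauchy--Schwarz exactly as the paper does. There is no substantive difference in method or in the handling of the error terms.
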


 Next, we estimate $\mathcal{S}_L$. We have 
	\begin{align*}
	\mathcal{S}_L&=	- \frac{1}{2 \pi i} \int_{1-\kappa+iT}^{1-\kappa+i}\frac{\zeta'}{\zeta}(1\!-\!s) \chi(s+\alpha) \zeta(1\!-\! s-\alpha) X(s) Y(1\!-\! s) \, ds
	\\&=\overline{\frac{1}{2\pi}\int_{1}^{T}\frac{\zeta'}{\zeta}(\kappa+it)\chi(1-\kappa-it+\overline{\alpha})\zeta(\kappa+it-\overline{\alpha})\overline{X}(1-\kappa-it)\overline{Y}(\kappa+it)\, dt}
	\end{align*}
where we use the notation $\displaystyle{\overline{X}(s)=\sum_{n\leqslant N}\overline{x(n)}/n^s}$ and $\displaystyle{\overline{Y}(s)=\sum_{n\leqslant N}\overline{y(n)}/n^s}$.	
Define
	\begin{align*}
			\mathcal{S}_L(\gamma):=\frac{1}{2\pi}\int_{1}^{T}\frac{\zeta(\kappa+\gamma+it)}{\zeta(\kappa+it)}\chi(1-\kappa-it+\overline{\alpha})\zeta(\kappa+it-\overline{\alpha})\overline{X}(1-\kappa-it)\overline{Y}(\kappa+it)\, dt
	\end{align*}
	for $\gamma\in \mathbb{C}$ with $\left|\gamma\right|\leqslant \frac{1}{15\log T}$.
	Then we have
\begin{equation}
   \label{SLidentity}
		\mathcal{S}_L=\overline{\frac{d}{d\gamma}	\mathcal{S}_L(\gamma)\Big|_{\gamma=0}}.
\end{equation}
Now we consider  $	\mathcal{S}_L(\gamma)$. By the change of variable $s=\kappa+it=w+\overline{\alpha}$, we have 
	\begin{align*}
			\mathcal{S}_L(\gamma)
			=\frac{1}{2\pi i}\int_{\kappa+i-\overline{\alpha}}^{\kappa+iT-\overline{\alpha}}\frac{\zeta(w+\overline{\alpha}+\gamma)}{\zeta(w+\overline{\alpha})}\chi(1-w)\zeta(w)\overline{X}(1-w-\overline{\alpha})\overline{Y}(w+\overline{\alpha})\, dw.
	\end{align*}
Since $\left|\alpha\right|$ and $\left|\gamma\right|$ are small enough so that $\Re w>1$,  we can make use of the underlying Dirichlet series of the integrand above. For $w$ being consider in the integral above, 	define
\begin{align*}
	A(w):&=\frac{\zeta(w+\overline{\alpha}+\gamma)}{\zeta(w+\overline{\alpha})}\zeta(w)\overline{Y}(w+\overline{\alpha})
	=\sum_{m=1}^{\infty}\frac{a(m)}{m^{w}}
\end{align*}
and 
\begin{align} \label{Bdefn}
		B(1-w):=\overline{X}(1-w-\overline{\alpha})	=\sum_{k\leqslant N}\frac{b(k)}{k^{1-w}}
\end{align}
	where 
	\begin{align}\label{definition of a as convolution}
		a(m)=a(m,\gamma,\alpha):=\sum_{\substack{m_1m_2m_3m_4=m\\m_4\leqslant N}}\mu(m_1)m_1^{-\overline{\alpha}}m_2^{-\overline{\alpha}-\gamma}\overline{y(m_4)}m_4^{-\overline{\alpha}},
	\end{align}
and
\begin{align}\label{bdefn}
		b(k)=b(k,\alpha):=\overline{x(k)}k^{\overline{\alpha}}.
\end{align}
Then we have 
\begin{align*}
	\mathcal{S}_L(\gamma)=\frac{1}{2\pi i}\int_{\kappa+i-\overline{\alpha}}^{\kappa+iT-\overline{\alpha}}\chi(1-w)B(1-w)A(w)\,dw.
\end{align*}

Since $\left|\alpha\right|, \left|\gamma\right|\leqslant 1/\left(15\log T\right)$ and $N\leqslant T^\vartheta$ for some $\vartheta<1/2$, we have 
\begin{align*}
	a(m)\ll m^{\frac{1}{5\log T}}(\tau_3\ast \left|y\right|)(m).
\end{align*}
Thus, by Lemma \ref{CGG_type_lemma}, we have 
\begin{align*}
	\mathcal{S}_L(\gamma)=\sum_{k\leqslant N}\frac{b(k)}{k}\sum_{m\leqslant kT/2\pi}a(m)e\left(-m/k\right)+O\left(  T^{1/2}\log^3 T\left\|x \right\|_1\left\|\frac{y(n)}{n} \right\|_1    \right).
\end{align*}
Note the the error term above is absorbed by the error term $\textswab{e}_1$ in (\ref{SRerror}).

Let $m'=m/(m,k)$ and $k'=k/(m,k)$. We have the 
identity
\begin{equation}
	e \left(-\frac{m}{k} \right) = e \left(-\frac{m'}{k'} \right)
	= \frac{\mu(k')}{\phi(k')}
	+ \sum_{{\begin{substack}{q \mid k
					\\ q > 1}\end{substack}}}  \ \chiqq \tau(\overline{\psi})
	\sum_{{\begin{substack}{d \mid m
					\\ d  \mid k}\end{substack}}} 
	\psi \left( \frac{m}{d}\right)  \delta(q,k,d,\psi)
\end{equation}
where $\psi$ ranges through primitive Dirichlet characters, $\tau(\overline{\psi})$ denotes the Gauss sum, and
\begin{align}	\label{eq:delta}
	\delta(q,k,d,\psi) = 
	\sum_{{\begin{substack}{e \mid d
					\\ e \mid k/q}\end{substack}}} 
	\frac{\mu(d/e)}{\phi(k/e)} \overline{\psi}
	\Big( - \frac{k}{eq}
	\Big) \psi \left(\frac{d}{e} \right) \mu 
	\left(\frac{k}{eq}\right).
\end{align}
Note that a proof of this identity may be found in \cite{CGG}, \cite{Ng}. 
Let 
\begin{align*}
	\mathcal{M(\gamma)}:=\sum_{k\leqslant N}\frac{b(k)}{k}\sum_{m\leqslant kT/2\pi}a(m)\frac{\mu(k/(m,k))}{\phi(k/(m,k))}
\end{align*}
and 
\begin{align*}
	\mathcal{E(\gamma)}:=\sum_{k\leqslant N}\frac{b(k)}{k}\sum_{m\leqslant kT/2\pi}a(m)\sum_{\substack{q\mid k \\ q>1}}   \chiqq \tau(\overline{\psi})
	\sum_{\substack{d\mid m \\ d\mid k}} 
	\psi \left(\frac{m}{d}\right) \delta(q,k,d,\psi).
\end{align*}
Thus  we have 
	\begin{align*}
			\mathcal{S}_L(\gamma)=	\mathcal{M}(\gamma)+\mathcal{E}(\gamma) +O( \textswab{e}_1).
	\end{align*}
where $ \textswab{e}_1$ is defined in (\ref{SRerror}). 
Since the error term $\textswab{e}_1$ is uniform in $\gamma$ with $\left|\gamma\right|\leqslant 1/\left(15\log T\right)$, an application of the Cauchy Integral Formula to the term $\textswab{e}_1$ produces an error term $\ll \textswab{e}_1\log T$ which is absorbed by the error term $\textswab{e}_1$ due to the factor of $T^\varepsilon$ in $\textswab{e}_1$. Thus, by \eqref{SLidentity}, we have 
\begin{equation}
	\label{SLexpansion}
	\mathcal{S}_L =\mathcal{M}+ \mathcal{E}+O\left(\textswab{e}_1\right)
\end{equation}
where
\begin{equation}\label{MgammatoM}
	\mathcal{M} :=\overline{\frac{d}{d\gamma} \mathcal{M}(\gamma)\Bigr|_{\gamma=0}}
\,\,	\text{ and }\,\, 
	\mathcal{E} :=\overline{\frac{d}{d\gamma} \mathcal{E}(\gamma)\Bigr|_{\gamma=0}}.
\end{equation}

Now, our aim is to estimate the term $\mathcal{M}$ and to find an upper bound for the size of the error term $\mathcal{E}$. In the following sections,  we prove the following  theorems. 
\begin{theorem}\label{Mthm}
Let $Z_{\alpha,\gamma,h,k}$ be defined by (\ref{definition of Z}). We have 
 
\begin{align*}
  \mathcal{M} 
  &= \sum_{g\leqslant N}\sum_{\substack{h,k\leqslant N/g\\(h,k)=1}}\frac{y(gh)x(gk)}{ghk}
  \overline{ \frac{d}{d\gamma }\frac{1}{2\pi}
  	\int_{0}^{T}\left(Z_{\overline{\alpha},\gamma,h,k}+\left(\frac{t}{2\pi}\right)^{-\overline{\alpha}-\gamma} Z_{-\gamma,-\overline{\alpha},h,k} \right) \, dt \Big|_{\gamma=0}  }
  \\&\quad \quad  +O\left(\textswab{e}_2\right)
\end{align*}
where 
\begin{align*}
	\textswab{e}_2:=\begin{cases}
		T\exp\left(-C\sqrt{\log T}\right) &  \text{assuming (\ref{xyinftyunc})},
		\\  T^{\Theta+\varepsilon} \left\|\frac{x(n)y(n)}{n} \right\|_1 \left\|\frac{y(n)}{n^{\Theta}} \right\|_1 \left\|\frac{x(n)}{n} \right\|_1\left\|\frac{x(n)}{n^{2-\Theta}} \right\|_1  & \text{assuming GRH($\Theta$) and (\ref{submultiplicative})}
	\end{cases}
\end{align*}
for some positive constant $C$.  
\end{theorem}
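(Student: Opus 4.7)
The plan is to strip off the $y$-coefficient in $a(m)$, reparameterize so that the outer indices match the desired form, apply Perron's formula to a residual arithmetic sum, and collect the two residues that will yield the two $Z$-terms.

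First I would write $a(m)=\sum_{m_4\ell=m,\,m_4\leqslant N}\overline{y(m_4)}\,m_4^{-\overline{\alpha}}\,c(\ell)$, where $c$ is the arithmetic function with Dirichlet series $\zeta(s)\zeta(s+\overline{\alpha}+\gamma)/\zeta(s+\overline{\alpha})$. For each $(k,m_4)$ I then set $g=(m_4,k)$ and write $k=gk_1$, $m_4=gh$ with $(h,k_1)=1$. The coprimality $(h,k_1)=1$ yields $(m_4\ell,k)=g(\ell,k_1)$, so $\mu(k/(m_4\ell,k))/\phi(k/(m_4\ell,k))$ reduces to $\mu(k_1/(\ell,k_1))/\phi(k_1/(\ell,k_1))$, depending only on $\ell$ and $k_1$. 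Combined with $b(k)=\overline{x(k)}k^{\overline{\alpha}}$, the external weight becomes $\overline{y(gh)x(gk_1)}\,h^{-\overline{\alpha}}k_1^{\overline{\alpha}}/(ghk_1)$ times a residual $\ell$-sum $I(h,k_1)$, cleanly separated along the target indices $g,h,k_1$.

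Second, I would evaluate
\[
I(h,k_1)=\sum_{\ell\leqslant k_1 T/(2\pi h)}c(\ell)\,\frac{\mu(k_1/(\ell,k_1))}{\phi(k_1/(\ell,k_1))}
\]
via Perron's formula (Lemma~\ref{perron}). Its Dirichlet series $D_{k_1}(s)$ factors as an Euler product: at primes $p\nmid k_1$ the local factor is that of $\zeta(s)\zeta(s+\overline{\alpha}+\gamma)/\zeta(s+\overline{\alpha})$, while at $p\mid k_1$ a direct local calculation produces a modified factor. Factoring out the global $\zeta(s)\zeta(s+\overline{\alpha}+\gamma)/\zeta(s+\overline{\alpha})$ leaves a finite Euler product $\prod_{p\mid k_1}\mathcal{L}_p(s)$ which, evaluated at $s=1$ and at $s=1-\overline{\alpha}-\gamma$, matches the Euler products appearing in $Z_{\overline{\alpha},\gamma,h,k_1}$ and $Z_{-\gamma,-\overline{\alpha},h,k_1}$ respectively.

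Third, I would shift the Perron contour to a line $\Re(s)=\sigma_0<1$. The only singularities of $D_{k_1}(s)X^s/s$ inside the strip are the simple poles at $s=1$ (from $\zeta(s)$) and $s=1-\overline{\alpha}-\gamma$ (from $\zeta(s+\overline{\alpha}+\gamma)$). The first residue, proportional to $X=k_1T/(2\pi h)$, contributes the $Z_{\overline{\alpha},\gamma,h,k_1}$ term via $X=\tfrac{k_1}{2\pi h}\int_0^T dt$; the second, proportional to $X^{1-\overline{\alpha}-\gamma}/(1-\overline{\alpha}-\gamma)$, contributes the $(t/2\pi)^{-\overline{\alpha}-\gamma}Z_{-\gamma,-\overline{\alpha},h,k_1}$ term via $X^{1-\overline{\alpha}-\gamma}/(1-\overline{\alpha}-\gamma)=\tfrac{k_1^{1-\overline{\alpha}-\gamma}}{(2\pi h)^{1-\overline{\alpha}-\gamma}}\int_0^T(t/2\pi)^{-\overline{\alpha}-\gamma}dt$. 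Reassembling across $g,h,k_1$, relabeling $k_1\mapsto k$, and taking the conjugate of $\tfrac{d}{d\gamma}|_{\gamma=0}$ yields the claimed main term.

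The error $\textswab{e}_2$ comes from the Perron truncation error (controlled via Lemma~\ref{errorbounds} and Shiu-type divisor bounds), the horizontal-contour contributions, and the remaining vertical integral on $\Re(s)=\sigma_0$. Unconditionally, a classical zero-free region for $\zeta$ permits $\sigma_0=1-c/\log T$, giving the $T\exp(-C\sqrt{\log T})$ bound; conditionally on GRH$(\Theta)$ one pushes $\sigma_0=\Theta+\varepsilon$ and estimates the shifted integral in terms of the four $L^1$-norms appearing in $\textswab{e}_2$. I expect the main obstacle to be the explicit Euler-factor computation at $p\mid k_1$ (recognizing that $\mathcal{L}_p$ evaluates precisely to the factors of $Z$) together with the careful bookkeeping needed in the GRH case to yield the specific norm combination in the stated bound.
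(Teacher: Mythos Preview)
Your overall strategy---strip off the $y$-factor, reparametrize via $g=(m_4,k)$, apply Perron's formula, shift the contour and pick up the two residues at $s=1$ and $s=1-\overline{\alpha}-\gamma$---matches the paper's proof. The one organizational difference is in how the Dirichlet series for the inner $\ell$-sum is packaged. You propose to keep the Ramanujan-type weight $\mu(k_1/(\ell,k_1))/\phi(k_1/(\ell,k_1))$ inside and compute the resulting Euler product directly, obtaining modified local factors $\mathcal{L}_p(s)$ at $p\mid k_1$ which, at the two poles, reproduce the products in $Z_{\overline{\alpha},\gamma,h,k_1}$ and $Z_{-\gamma,-\overline{\alpha},h,k_1}$. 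The paper instead introduces an extra variable $d=(n,k)$, uses Lemma~\ref{convolutiondecomposition} to factor $\sum_{(n,k)=1}c(dn)n^{-s}$ as $\tfrac{\zeta(s+\overline{\alpha}+\gamma)\zeta(s)}{\zeta(s+\overline{\alpha})}G_1(s,k,d,\alpha,\gamma)$, carries $G_1$ through the residue calculation, and only at the very end performs the recombination $\sum_{dk=k'}\tfrac{k\mu(k)}{\phi(k)}G_1(\cdot)$ which collapses to the $Z$-factors. Your route is more direct but commits you to an explicit local calculation at each $p\mid k_1$; the paper's route is more mechanical (the convolution lemma does the work) at the cost of a deferred simplification step.

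Two small corrections. First, Lemma~\ref{errorbounds} is not what controls the Perron truncation here; the paper handles those error terms with Shiu's short-interval divisor bound, exactly as you also mention. Second, in the unconditional case the contour is not moved to $\sigma_0=1-c/\log T$ but to $\sigma_0=1-c/\log U$, and then $U$ is optimized as $\exp(C_1\sqrt{\log x})$; this is what produces the $\exp(-C\sqrt{\log T})$ saving rather than merely a power of $\log T$.
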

The bound for the error term will be based on   Heath-Brown's combinatorial decomposition of $\mu(n)$ 
and an
application of the large sieve inequality.  The argument combines ideas from  \cite{BHB} and \cite{HLZ}.  

\begin{theorem}
	\label{ErrorUnconditional}
 Let  $A>0$ be arbitrarily large but fixed. Under the assumptions in  (\ref{xyinftyunc}), we have  
	\[
	\mathcal{E} \ll T (\log T)^{-A}. 
	\]
\end{theorem}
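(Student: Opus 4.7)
The plan is to prove $\mathcal{E}\ll T(\log T)^{-A}$ by combining Heath-Brown's combinatorial identity for $\mu$ with Siegel--Walfisz for small moduli and the multiplicative large sieve for large moduli, in the spirit of the Bui--Heath-Brown correction \cite{BHB} of the argument in \cite{CGG}.

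First, since $\mathcal{E}(\gamma)$ is holomorphic on the disc $|\gamma|\le 1/(15\log T)$, Cauchy's integral formula yields $|\mathcal{E}|\ll (\log T)\sup_{|\gamma|\le 1/(15\log T)}|\mathcal{E}(\gamma)|$, so it suffices to prove $\mathcal{E}(\gamma)\ll T(\log T)^{-A-1}$ uniformly on this disc. I would then interchange the order of summation to push the character sum outermost. Using the Gauss sum bound $|\tau(\overline{\psi})|\le \sqrt{q}$, the estimate $|\delta(q,k,d,\psi)|\ll \tau(d)/\varphi(k/d)$ (which follows by inspection of \eqref{eq:delta}), and the divisor-type growth of $b(k)$ from \eqref{xyinftyunc}, the problem reduces to saving a factor of $(\log T)^{-A-O(1)}$ on average in the inner character sum
\[
 \mathcal{I}(\psi,k,d) \;=\; \sum_{\substack{m \le kT/(2\pi)\\ d\mid m}} a(m,\gamma,\alpha)\,\psi(m/d)
\]
over primitive characters $\psi$ modulo $q$ with $1<q\le N$.

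Next, I would write $a(m,\gamma,\alpha)=(\mu * f)(m)$ with $f = s_{-\overline{\alpha}} * s_{-\overline{\alpha}-\gamma} * (\overline{y}\cdot s_{-\overline{\alpha}})$, so that $f(n)\ll \tau_3(n)(1*|y|)(n)$. Applying Heath-Brown's identity of a fixed depth $K=K(A,\vartheta)$ to the $\mu$-factor and then dyadically partitioning all variables, $\mathcal{I}(\psi,k,d)$ becomes a sum of $O_K((\log T)^{K})$ bilinear expressions $\sum_{u,v}\alpha_u\beta_v\psi(uv)$ whose components have length at most $(NT)^{1/K}$. For $q\le Q_0:=\exp(c\sqrt{\log T})$ I would apply the Siegel--Walfisz theorem to the Type I pieces (those with one long smooth variable), obtaining a saving of $\exp(-c'\sqrt{\log T})$, which is much better than $(\log T)^{-A}$. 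For $Q_0<q\le N$ I would invoke the multiplicative large sieve on the Type II pieces; since $Q\le N\ll T^\vartheta$ with $\vartheta<1/2$ and the polynomial lengths are $\ll NT$, this yields a genuine power-of-$T$ saving.

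The main obstacle is the entanglement caused by the sharp cutoff $m\le kT/(2\pi)$ in $\mathcal{I}(\psi,k,d)$, which couples the $k$- and $m$-summations and prevents a clean bilinearization of the form demanded by the large sieve. I plan to resolve this by inserting Perron's formula (Lemma~\ref{perron}) to rewrite the cutoff as a truncated vertical contour integral of a product of Dirichlet series, at the admissible cost of a $T^\varepsilon$ factor, and then to execute the Siegel--Walfisz or large sieve step on each horizontal slice of the contour before integrating out the Perron variable. Summing the resulting bounds over all Heath-Brown components, over dyadic ranges, and over pairs $(q,\psi)$, and choosing the Siegel--Walfisz savings exponent sufficiently large relative to $A$ and $K$, yields the stated bound.
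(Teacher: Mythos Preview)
Your overall strategy---Cauchy's formula to pass from $\mathcal{E}(\gamma)$ to $\mathcal{E}$, a split into small and large moduli, zero-free-region savings for the former, and Heath-Brown's identity plus the large sieve for the latter---is correct and is exactly the route taken in the paper (which follows \cite{HLZ} and \cite{BHB}).  There is, however, an organisational difference that hides a gap in your write-up.

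The paper does \emph{not} apply Heath-Brown's identity for all $q$.  For small moduli $q\le\eta$ (with $\eta$ a fixed power of $\log T$) it evaluates $\sum_{m\le x}a(dm)\psi(m)$ directly by Perron's formula: by \eqref{DSidentity} the generating Dirichlet series factors as $L(s+\overline\alpha+\gamma,\psi)L(s,\psi)L(s+\overline\alpha,\psi)^{-1}G_2(s,\dots)$, and shifting the contour into the classical zero-free region for $L(s,\psi)$ gives a saving of $\exp(-C\sqrt{\log T})$ with a loss of only $\eta^{3/2+\varepsilon}$.  Heath-Brown's identity is invoked \emph{only} for $q>\eta$, where the average over primitive $\psi\bmod q$ makes the large sieve genuinely effective on the Type~II sums, and the Type~I sums are handled by partial summation and P\'olya--Vinogradov.

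Your plan instead applies the combinatorial identity globally and then says that for $q\le Q_0$ one ``applies Siegel--Walfisz to the Type~I pieces''.  This leaves the Type~II pieces for a \emph{single} small modulus unaccounted for: a bilinear sum $\sum_{m,n}\alpha_m\beta_n\psi(mn)$ with generic coefficients and both ranges of moderate size has no nontrivial bound for one fixed $\psi$---the large sieve is useless over a single character, and Siegel--Walfisz (which concerns $\Lambda$ in progressions) is not the relevant tool.  The only way to recover the saving on those pieces is to reassemble them and invoke the zero-free region of $L(s,\psi)$, which is precisely what the paper's Perron argument does \emph{before} any decomposition, making the whole issue disappear.  Two smaller points: the bound you state for $\delta$ does not follow from \eqref{eq:delta} as written---the paper uses $|\delta(q,kq,d,\psi)|\ll (d,k)(\log\log T)/(\phi(k)\phi(q))$ after reparametrising $k\mapsto kq$; and your Perron device for the sharp cutoff $m\le kT/(2\pi)$ is heavier than needed---after the dyadic split in $q,k,d$ the paper simply replaces the inner sum by $\max_{x\le 2KQT}|\sum_{m\le x}a(dm)\psi(m)|$, which decouples the variables at the cost of a harmless $\log T$.
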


\begin{theorem}
\label{EGRH}
Assume GRH($\Theta$).  Then 
\[
  \mathcal{E} \ll T^{\Theta+\varepsilon}  \left\|\frac{y(n)}{n^\Theta}\right\|_1\left\|n^{1/2}x(n)(1\ast\left| y\right|)(n)\right\|_1.
\]
\end{theorem}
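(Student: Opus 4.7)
The plan is to estimate $\mathcal{E}(\gamma)$ uniformly for $|\gamma| \leq 1/(15\log T)$ and then recover $\mathcal{E}$ via Cauchy's integral formula applied to the derivative at $\gamma = 0$ in \eqref{MgammatoM}; the extra factor of $\log T$ from the Cauchy estimate over a circle of radius $1/(15\log T)$ is absorbed by $T^\varepsilon$.

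The central task is to bound, for each $k \leq N$, each divisor $q \mid k$ with $q > 1$, each primitive $\psi$ modulo $q$, and each $d \mid k$, the inner partial sum
\[
S_{d,\psi}(X) := \sum_{\substack{m \leq X \\ d \mid m}} a(m)\,\psi(m/d) = \sum_{n \leq X/d} a(dn)\,\psi(n), \qquad X = \frac{kT}{2\pi}.
\]
Since $\psi$ is completely multiplicative and $a$ is the 4-fold convolution in \eqref{definition of a as convolution}, the twisted Dirichlet series admits the factorization
\[
\sum_{n=1}^\infty \frac{a(dn)\psi(n)}{n^s} = \frac{L(s+\overline{\alpha}+\gamma,\psi)\, L(s,\psi)}{L(s+\overline{\alpha},\psi)}\, \overline{Y}_\psi(s+\overline{\alpha}) \cdot H_d(s,\overline{\alpha},\gamma,\psi),
\]
where $\overline{Y}_\psi(s) = \sum_{f \leq N} \overline{y(f)} \psi(f) / f^s$ and $H_d$ is a finite Euler product supported at the primes dividing $d$. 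This factorization can be verified directly via Lemma \ref{convolutiondecomposition} by decomposing the convolution over divisor factorizations $d = d_1 d_2 d_3 d_4$ and then collecting local factors prime-by-prime.

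By Perron's formula (Lemma \ref{perron}) at the line $\Re s = \kappa = 1 + 1/\log T$ followed by a contour shift to $\Re s = \Theta + \varepsilon$, invoked under GRH($\Theta$) to ensure holomorphy of the integrand in the half-plane $\Re s > \Theta$ (the potential pole of $L(s,\psi)$ at $s=1$ is absent since $\psi$ is non-principal), one obtains
\[
S_{d,\psi}(X) \ll (X/d)^{\Theta + \varepsilon}(qk)^\varepsilon \left\|\frac{y(n)}{n^\Theta}\right\|_1.
\]
Here the standard GRH bound $|L(s,\psi)^{\pm 1}| \ll (q(|s|+1))^\varepsilon$ on the shifted line controls the main factor, $\overline{Y}_\psi$ is bounded trivially by $\|y(n)/n^\Theta\|_1$, and $H_d$ is controlled by a straightforward Euler-product estimate using $|\overline\alpha|,|\gamma|\ll 1/\log T$.

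Substituting back into the definition of $\mathcal{E}(\gamma)$ and applying the Gauss sum bound $|\tau(\overline{\psi})| \leq q^{1/2} \leq k^{1/2}$ together with the expression for $\delta(q,k,d,\psi)$ in \eqref{eq:delta} yields
\[
\mathcal{E}(\gamma) \ll T^{\Theta+\varepsilon}\left\|\frac{y(n)}{n^\Theta}\right\|_1 \sum_{k \leq N} |x(k)|\, k^{1/2}\, \Sigma_k,
\]
where $\Sigma_k$ is a combinatorial sum over $q \mid k$, primitive $\psi \bmod q$, and divisors $e \mid d \mid k$ appearing in $\delta(q,k,d,\psi)$. The M\"obius cancellation built into $\delta$ and the support conditions on $x, y$ cause this sum to collapse to the divisor convolution $\Sigma_k \ll (1 \ast |y|)(k)$ up to factors absorbed into $T^\varepsilon$, completing the bound.

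The main obstacle is the algebraic bookkeeping at two separate points: first, establishing the closed-form factorization of the generating Dirichlet series with explicit enough control on the correction factor $H_d$ at primes dividing $d$ to justify the contour shift and extract the Dirichlet polynomial $\overline{Y}_\psi$ cleanly; second, verifying that the sum over the divisibility parameters $q, d, e \mid k$ in $\delta(q,k,d,\psi)$ condenses precisely to $(1\ast|y|)(k)$ via a Ramanujan-sum-type identity. Once these two technical points are settled, the contour shift and final summations are routine applications of standard GRH machinery.
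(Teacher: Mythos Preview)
Your high-level strategy---Perron's formula followed by a contour shift to $\Re s=\Theta+\varepsilon$ under GRH($\Theta$), then summing over the divisibility parameters---is exactly what the paper does. However, your proposal misidentifies the algebraic structure at two key places, and if you proceeded as written you would get stuck.

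First, the factorization of $\sum_n a(dn)\psi(n)n^{-s}$ does \emph{not} take the form you claim. The Dirichlet polynomial $\overline{Y}_\psi(s+\overline{\alpha})$ cannot be extracted as a clean factor, and the residual piece $H_d$ is \emph{not} a finite Euler product supported at primes dividing $d$. When you apply Lemma~\ref{convolutiondecomposition} to the four-fold convolution $a=f_1\ast f_2\ast f_3\ast f_4$ with $f_4(n)=\overline{y(n)}n^{-\overline\alpha}$, the $y$-factor that appears is
\[
A_\psi(s+\overline\alpha,d_1)=\sum_{m d_1\le N}\frac{\psi(m)\overline{y(md_1)}}{m^{s+\overline\alpha}},
\]
which depends on the divisor $d_1$ of $d$ in the decomposition $d=d_1d_2d_3d_4$ and is not multiplicatively separable from $d$. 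The correct bound on the full correction factor $G_2$ (the paper's analogue of your $H_d\cdot\overline{Y}_\psi$) is
\[
G_2(s,d,\alpha,\gamma,\psi)\ll T^\varepsilon\Bigl(\sum_{\ell\mid d}|y(\ell)|\Bigr)\left\|\frac{y(n)}{n^\sigma}\right\|_1,
\]
and it is this divisor sum $\sum_{\ell\mid d}|y(\ell)|$---not anything in $\delta$---that is the source of the eventual factor $(1\ast|y|)(k)$ in the final bound.

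Second, there is no ``M\"obius cancellation'' or ``Ramanujan-sum-type identity'' in the treatment of $\delta(q,k,d,\psi)$. The paper simply uses the crude pointwise estimate $|\delta(q,qk,d,\psi)|\ll (d,k)(\log\log T)/(\phi(k)\phi(q))$, bounds the sum over primitive $\psi\bmod q$ trivially by $\phi(q)$, and then observes that $\frac{(d,k)}{\phi(k)\phi(q)}\bigl(\frac{qk}{d}\bigr)^\Theta\ll T^\varepsilon\bigl(\frac{d}{qk}\bigr)^{1-\Theta}$. The remaining sum $\sum_{d\mid qk}d^{1-\Theta}\sum_{\ell\mid d}|y(\ell)|$ is then bounded by $T^\varepsilon(qk)^{1-\Theta}(1\ast|y|)(qk)$, and a final change of variables $n=qk$ together with $\sum_{q\mid n}q^{3/2}\ll n^{3/2+\varepsilon}$ yields the stated norm. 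So the obstacle you flagged is real, but its resolution is more pedestrian than you anticipated: no identity, just divisor bounds.
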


Now, by using the theorems above, we conclude the proof of Theorem  \ref{mainthm}. The main term in Theorem \ref{mainthm} is obtained by Theorems \ref{SRtheorem} and \ref{Mthm} and Lemma  \ref{lemma_simplification_of_F}.  
The error term $\tilde{\mathcal{E}}$ in Theorem \ref{mainthm} is obtained by the sum of the error term $\textswab{e}_1$ in (\ref{SequalsSRSLerror}) and in Theorem \ref{SRtheorem}, the error term $\textswab{e}_2$ in Theorem \ref{Mthm} and the error term $\mathcal{E}$ in Theorems \ref{ErrorUnconditional} and \ref{EGRH}. Assuming the bounds in (\ref{xyinftyunc}), we have $\textswab{e}_1\ll T(\log T)^{-A}$ since $N\ll T^{\vartheta}$ for some $\vartheta<1/2$. Thus the desired result follows in this case. On the other hand, assuming (\ref{submultiplicative}) and GRH($\Theta)$, we have 
\begin{align*}
    \tilde{\mathcal{E}}&\ll  T^{\frac{1}{2}+\varepsilon} \left(\left\| x \right\|_{1}\left\| \frac{y(n)}{n} \right\|_{1}+\left\| y \right\|_{1}\left\| \frac{x(n)}{n} \right\|_{1}\right) + T^{\Theta+\varepsilon} \left\|\frac{x(n)y(n)}{n} \right\|_1 \left\|\frac{y(n)}{n^{\Theta}} \right\|_1 \left\|\frac{x(n)}{n} \right\|_1\left\|\frac{x(n)}{n^{2-\Theta}} \right\|_1 
    \\&\quad \quad +T^{\Theta+\varepsilon}  \left\|\frac{y(n)}{n^\Theta}\right\|_1\left\|n^{1/2}x(n)(1\ast\left| y\right|)(n)\right\|_1.
\end{align*}
\qed

\section{Proof of Theorem \ref{Mthm}: Estimate for $\mathcal{M}$} \label{sectionM}
	By the definition of $a(m)$ in \eqref{definition of a as convolution} and using the change of variable $m=nh$ where $h$ stands for the variable $m_4$ (\ref{definition of a as convolution}), we have
	\begin{align*}
		\mathcal{M}(\gamma)=\sum_{k,h\leqslant N}\frac{b(k)\overline{y(h)}h^{-\overline{\alpha}}}{k}\sum_{n\leqslant kT/2\pi h}c(n)\frac{\mu(k/(nh,k))}{\phi(k/(nh,k))}	
	\end{align*}
where 
\begin{align}\label{definition of cn}
	c(n):=\sum_{n_1n_2n_3=n}\mu(n_1)n_1^{-\overline{\alpha}}n_2^{-\overline{\alpha}-\gamma}.
\end{align}
Grouping the terms with the same $g=(h,k)$, we have  	
	\begin{align*}
		\mathcal{M}(\gamma)=	\sum_{g\leqslant N}\sum_{\substack{h,k\leqslant N/g\\(h,k)=1}}\frac{b(gk)\overline{y(gh)}(gh)^{-\overline{\alpha}}}{gk}\sum_{n\leqslant\frac{kT}{2\pi h}}c(n)\frac{\mu(k/(n,k))}{\phi(k/(n,k))}.
	\end{align*}
	Grouping the terms with the same $d=(n,k)$, we have 
	\begin{align}\label{M_gamma_final}
		\mathcal{M}(\gamma)=	\sum_{g\leqslant N}\sum_{h\leqslant N/g}\frac{\overline{y(gh)}(gh)^{-\overline{\alpha}}}{g}\sum_{d\leqslant N/g}\sum_{\substack{k\leqslant N/gd\\(h,dk)=1}}\frac{b(gdk)}{dk}\frac{\mu(k)}{\phi(k)}\sum_{\substack{n\leqslant \frac{kT}{2\pi h }\\(n,k)=1}}c(dn).
	\end{align}
	In Lemma \ref{convolutiondecomposition}, we take $f_1(n):=\mu(n)n^{-\overline{\alpha}}, f_2(n):=n^{-\overline{\alpha}-\gamma}$ and $f_3(n)=1$ for all $n\in\mathbb{N}$. Let $g\leqslant N, \,\,  h,d\leqslant N/g, \, \, k\leqslant N/gd$ and  $\sigma\geqslant 1+\frac{1}{\log T}$. We have 
	\begin{align}\label{decomposition}
	\nonumber	\sum_{\substack{n=1\\(n,k)=1}}^{\infty}&\frac{(f_1\ast f_2\ast f_3)(dn)}{n^{s}}\\&\nonumber =\sum_{d_1d_2d_3=d}\sum_{\substack{n_1=1\\(n_1,kd_1d_2)=1}}^{\infty}\frac{\mu(n_1d_3)(n_1d_3)^{-\overline{\alpha}}}{n_1^s}
	\sum_{\substack{n_2=1\\(n_2,kd_1)=1}}^{\infty}\frac{(n_2d_2)^{-\overline{\alpha}-\gamma}}{n_2^s}
	\sum_{\substack{n_3=1\\(n_3,k)=1}}^{\infty}\frac{1}{n_3^s}
	\\\nonumber&=\sum_{d_1d_2d_3=d}\mu(d_3)d_3^{-\overline{\alpha}}d_2^{-\overline{\alpha}-\gamma}\sum_{\substack{n_1=1\\(n_1,kd_1d_2d_3)=1}}^{\infty}\frac{\mu(n_1)n_1^{-\overline{\alpha}}}{n_1^s}
	\sum_{\substack{n_2=1\\(n_2,kd_1)=1}}^{\infty}\frac{n_2^{-\overline{\alpha}-\gamma}}{n_2^s}
	\sum_{\substack{n_3=1\\(n_3,k)=1}}^{\infty}\frac{1}{n_3^s}
	\\&=\frac{\zeta(s+\overline{\alpha}+\gamma)\zeta(s)}{\zeta(s+\overline{\alpha})}G(s,k,d,\alpha,\gamma)
	\end{align}
where
	\begin{align*}
		G_1(s,k,d,\alpha,\gamma)&:=\left(\prod_{p\mid kd}\left(1-\frac{1}{p^{s+\overline{\alpha}}}\right)^{-1}\right)\prod_{p\mid k}\left(1-\frac{1}{p^s}\right)
		\\&\quad\quad\quad \times \sum_{d_1d_2d_3=d}\mu(d_3)d_3^{-\overline{\alpha}}d_2^{-\overline{\alpha}-\gamma}\left(\prod_{p\mid kd_1}\left(1-\frac{1}{p^{s+\overline{\alpha}+\gamma}}\right)\right).
	\end{align*}
Note that  by changing the roles of $f_2$ and $f_3$ when applying Lemma \ref{convolutiondecomposition}, we can also write
\begin{align}\label{another form of G for the second residue}
	\nonumber G_1(s,k,d,\alpha,\gamma)&=\sum_{d_1d_2d_3=d}\mu(d_3)d_3^{-\overline{\alpha}}d_1^{-\overline{\alpha}-\gamma}\left(\prod_{p\mid kd_1d_2d_3}\left(1-\frac{1}{p^{s+\overline{\alpha}}}\right)^{-1}\right)\left(\prod_{p\mid kd_1}\left(1-\frac{1}{p^s}\right)\right)
	\\&\quad \quad\times \prod_{p\mid k}\left(1-\frac{1}{p^{s+\overline{\alpha}+\gamma}}\right). 
\end{align}
The formulation in (\ref{another form of G for the second residue}) will be useful when dealing with one of the residues later.

Let 
\begin{align*}
	F_1(s):=\frac{\zeta(s+\overline{\alpha}+\gamma)\zeta(s)}{\zeta(s+\overline{\alpha})}	G_1(s,k,d,\alpha,\gamma)\frac{x^s}{s}
\end{align*}
where $
	x=\frac{kT}{2\pi h}$.
Let $ 2\leqslant U$, $H\leqslant T^2$ and $\kappa=1+\frac{1}{\log T}$. By Lemma \ref{perron}, we have 
\begin{align}\label{perron_1}
		\sum_{\substack{n\leqslant \frac{kT}{2\pi h }\\(n,k)=1}}c(dn)&=\frac{1}{2\pi i}\int_{\kappa-iU}^{\kappa+iU}	F_1(s)\, ds+O\left(\sum_{x-\frac{x}{H}\leqslant n\leqslant x+\frac{x}{H}}\left|c(dn)\right|\right)+O\left( \frac{x^{\kappa}H}{U}\sum_{n=1}^{\infty}\left|c(dn)\right|n^{-\kappa}   \right).
\end{align}
Since $x\ll T^{1+\vartheta}$ and $\left|c(dn)\right|\ll (dn)^{\frac{1}{5\log T}}\tau_3(d)\tau_3(n)\ll n^{\frac{1}{5\log T}} \tau_3(d)\tau_3(n)$,  the second error term on the right-hand side of (\ref{perron_1}) is 
\begin{align}\label{error_perron_1}
	\ll \frac{xH \tau_3(d) \log^3T
	}{U}
\end{align}
where the implied constant is absolute. For the first error term on the right-hand side of (\ref{perron_1}), we assume that $H\leqslant x^{1-\varepsilon}$ for some $0<\varepsilon<1/2$ and thus we can use Shiu's short divisor sum bound, \cite[Theorem 2]{S}, and thus 
\begin{align}\label{error_perron_2}
\sum_{x-\frac{x}{H}\leqslant n\leqslant x+\frac{x}{H}}\left|c(dn)\right|\ll \tau_3(d)\sum_{x-\frac{x}{H}\leqslant n\leqslant x+\frac{x}{H}}\tau_3(n)\ll \tau_3(d) \frac{x}{H}\log^2 x.	
\end{align}
By the explicit classical zero-free region,
\cite{Kad}, $\zeta(s)\neq 0$ in the region $\sigma>1-\frac{1}{6\log \left|t\right|}$ for $\left|t\right|\geqslant 2$. Since $\left|\alpha\right|+\left|\gamma\right|\leqslant \frac{2}{15\log T}<\frac{1}{6\log T}$, there exist an absolute positive constant $c$ such that for $\sigma_0:=1-\frac{c}{\log U}$, we have $\Re(\sigma_0+\overline{\alpha}+\gamma)<1$ and $\zeta(s+\overline{\alpha})\neq0$ for $\sigma\geqslant \sigma_0$ and $\left|t\right|\leqslant U$. Thus 
\begin{align*}
\frac{1}{2\pi i}\int_{\kappa-iU}^{\kappa+iU}	F_1(s)\, ds= R_{s=1}+R_{s=1-\overline{\alpha}-\gamma}+\frac{1}{2\pi i}\left( \int_{\kappa+iU}^{\sigma_0+iU}+\int_{\sigma_0+iU}^{\sigma_0-iU}+\int_{\sigma_0-iU}^{\kappa+iU}  \right)		F_1(s)\, ds
\end{align*}
where $R_{s=1}$ and $R_{s=1-\overline{\alpha}}$ denotes the residues of $F_1(s)$ at $s=1$ and $s=1-\overline{\alpha}-\gamma$, respectively. We leave the calculations of 
$R_{s=1}$ and $R_{s=1-\overline{\alpha}-\gamma}$ to the end of this section. Let 
\begin{align}\label{J_integral}
	J_1(\sigma_0):=\frac{1}{2\pi i}\left( \int_{\kappa+iU}^{\sigma_0+iU}+\int_{\sigma_0+iU}^{\sigma_0-iU}+\int_{\sigma_0-iU}^{\kappa+iU}  \right)	\frac{d}{d\gamma }  F_1(s) \Bigr|_{\gamma=0}	\, ds.
\end{align}
and note that 
\begin{align*}
	\frac{d}{d\gamma }  F_1(s)  \Bigr|_{\gamma=0}=\left(\frac{\zeta'}{\zeta}(s+\overline{\alpha})G_1(s,k,d,\alpha,0)+ G_1'(s,k,d,\alpha, 0)\right)\zeta(s)\frac{x^s}{s}
\end{align*}
where $G_1'(s,k,d,\alpha, 0)=\frac{d}{d\gamma } G_1(s,k,d,\alpha,\gamma)\big|_{\gamma=0}$. For $\sigma\geqslant 1/2$, we have 
	\begin{align}\label{divisor bound for G}
	G_1(s,k,d,\alpha,\gamma)\ll \tau_3(d)\prod_{p\mid kd}\left(1+\frac{5}{p^{\sigma}}\right)\ll \tau_3(d)\tau_2(kd)
\end{align}
and $G_1'(s,k,d,\alpha, 0)\ll\tau_3(d)\tau(kd)\log T $ where the implied constants are absolute. 
 By \cite[Theorem 6.7]{MV_book},  the choice of $\sigma_0$ and the assumptions that $\left|\alpha\right|\leqslant \frac{1}{15\log T}$ and $U\leqslant T$, we have 
\begin{align*}
	\frac{\zeta'}{\zeta}(s+\overline{\alpha}),\, \zeta(s)\ll  \log T
\end{align*}
where $s$ runs over the path in (\ref{J_integral}). Thus we have 
\begin{align}\label{error_J_sigma_0}
J_1(\sigma_0) \ll 	\tau_3(d)\tau_2(kd)\left(\log^3T\right) \left(\frac{x}{U}+x^{1-\frac{c}{\log U}}\right).
\end{align}
Applying the Cauchy Integral Formula to the error terms in (\ref{error_perron_1}) and (\ref{error_perron_2}) and  by choosing $U=\exp\left(C_1\sqrt{\log x}\right)\gg\exp\left(\frac{C_1}{\sqrt{2}}\sqrt{\log T}\right)$ for some positive constant $C_1$ and using the assumptions (\ref{xyinftyunc}) on the coefficients $x(n)$ and $y(n)$, we see that the total contribution of the error terms to the expression in (\ref{M_gamma_final}) is $\ll T\exp\left(-C_2\sqrt{\log T}\right)$ for some $C_2>0$. Hence, by (\ref{MgammatoM}), we have 
\begin{align*}
		\mathcal{M}&=	\sum_{g\leqslant N}\sum_{h\leqslant N/g}\frac{y(gh)(gh)^{-\alpha}}{g}\sum_{d\leqslant N/g}\sum_{\substack{k\leqslant N/gd\\(h,dk)=1}}\frac{x(gdk)(gdk)^{\alpha}}{dk}\frac{\mu(k)}{\phi(k)}\overline{\frac{d}{d\gamma}\left(  R_{s=1}+R_{s=1-\overline{\alpha}-\gamma}   \right)\Bigr|_{\gamma=0}}
		\\&\quad \quad + O\left(T\exp\left(-C_2\sqrt{\log T}\right)\right)
\end{align*}
which finishes the proof of the unconditional part of Theorem \ref{Mthm}.

Now we consider the conditional part of Theorem \ref{Mthm}. Assume that $\zeta(s)\neq 0$ for $\sigma\geqslant \Theta$ for some $\frac{1}{2}<\Theta<1$. Let $\varepsilon>0$ be small and take $\sigma_1:=\Theta+\varepsilon<1$. Note that for sufficiently large $T$, we have  $ \varepsilon\geqslant \frac{3}{15\log T}$  so that $\Re(\sigma_1+\overline{\alpha}+\gamma)<1$ and $\Re(\sigma_1+\overline{\alpha})\geqslant \Theta+\frac{\varepsilon}{2}$. Let $J_1(\sigma_1)$ be the corresponding integral in (\ref{J_integral}) where $\sigma_0$ is replaced by $\sigma_1$. By using the arguments in \cite[Section 13.2]{MV_book}, we have 
\begin{align*}
	\frac{\zeta'}{\zeta}(s+\overline{\alpha}),\, \zeta(s)\ll T^{\varepsilon}
\end{align*}
where $s$ runs through the path in $J(\sigma_1)$. Thus we have 
\begin{align}\label{error_term_J_sigma_1}
 J_1(\sigma_1) \ll 
	  T^{\varepsilon}\left( \frac{x}{U}+x^{\Theta+\varepsilon} \right)
\end{align}
since $\varepsilon$ is arbitrary and $	G(s,k,d,\alpha,\gamma), G'(s,k,d,\alpha,0)\ll T^\varepsilon$ as $d,k \leqslant T$. Thus the total contribution of the errors in (\ref{error_perron_1}), (\ref{error_perron_2}) and in (\ref{error_term_J_sigma_1}) is 
\begin{align*}
	\ll  T^{\varepsilon}\left( \frac{x}{U}+x^{\Theta+\varepsilon} +\frac{xH}{U}+\frac{x}{H}\right)\ll T^{\varepsilon}\left(x^{\Theta} +\frac{xH}{U}+\frac{x}{H}\right)
\end{align*}
since $x\ll T^{1+\vartheta}$. Thus choosing $U=x^{2-2\Theta}$ and $H=x^{1-\Theta}$, we see that the total error term in this case is 	$\ll x^{\Theta}T^\varepsilon$. 
The contribution of this error term to the expression in (\ref{M_gamma_final}) is bounded by 
\begin{align}\label{conditional_error_M}
		\nonumber &T^{\Theta+\varepsilon} \sum_{g\leqslant N}\sum_{h\leqslant N/g}\frac{\left|y(gh)\right|}{gh^{\Theta}}\sum_{d\leqslant N/g}\sum_{\substack{k\leqslant N/gd\\(h,dk)=1}}\frac{\left|x(gdk)\right|}{dk^{1-\Theta}}\frac{\left|\mu(k)\right|}{\phi(k)}
		\\\nonumber&\ll 	T^{\Theta+\varepsilon}\sum_{g\leqslant N}\frac{\left|y(g)x(g)\right|}{g}\sum_{h\leqslant N/g}\frac{\left|y(h)\right|}{h^\Theta}\sum_{d\leqslant N/g}\frac{\left|x(d)\right|}{d}\sum_{k\leqslant N/gd}\frac{\left|x(k)\right|}{k^{1-\Theta}\phi(k)}
		\\&\ll T^{\Theta+\varepsilon} \left\|\frac{x(n)y(n)}{n} \right\|_1 \left\|\frac{y(n)}{n^{\Theta}} \right\|_1 \left\|\frac{x(n)}{n} \right\|_1\left\|\frac{x(n)}{n^{2-\Theta}} \right\|_1
\end{align}
by the submultiplicativity assumptions on $x(n)$ and $y(n)$ and the fact that $\frac{1}{\phi(k)}\ll \frac{k^\varepsilon}{k}\ll \frac{T^{\varepsilon}}{k}$ by \cite[Theorem 327]{HardyWright}.

Hence, by (\ref{MgammatoM}), we have 
\begin{align*}
	\mathcal{M}&=	\sum_{g\leqslant N}\sum_{h\leqslant N/g}\frac{y(gh)(gh)^{-\alpha}}{g}\sum_{d\leqslant N/g}\sum_{\substack{k\leqslant N/gd\\(h,dk)=1}}\frac{x(gdk)(gdk)^{\alpha}}{dk}\frac{\mu(k)}{\phi(k)}\overline{\frac{d}{d\gamma}\left(  R_{s=1}+R_{s=1-\overline{\alpha}-\gamma}   \right)\Bigr|_{\gamma=0}}
	\\&\quad \quad \quad + O\left(T^{\Theta+\varepsilon} \left\|\frac{x(n)y(n)}{n} \right\|_1 \left\|\frac{y(n)}{n^{\Theta}} \right\|_1 \left\|\frac{x(n)}{n} \right\|_1\left\|\frac{x(n)}{n^{2-\Theta}} \right\|_1\right)
\end{align*}
under the assumptions GRH$(\Theta)$and (\ref{submultiplicative}). 

In order to finish the proof of Theorem \ref{Mthm}, we now calculate the residues. Without loss of generality, we can assume that the poles at $s=1$ and $s=1-\overline{\alpha}-\gamma$ are distinct by taking $\left|\gamma\right|<\left|\alpha\right|\leqslant \frac{1}{15\log T}$.  We have 
	\begin{align*}
	R_{s=1}=	\frac{\zeta(1+\overline{\alpha}+\gamma)}{\zeta(1+\overline{\alpha})}	G_1(1,k,d,\alpha,\gamma)\frac{kT}{2\pi h }
\end{align*}
Thus the contribution of $R_{s=1}$ to $\mathcal{M}$ is 
\begin{align*}
	\sum_{g\leqslant N}\sum_{\substack{h,k'\leqslant N/g\\(h,k')=1}}\frac{y(gh)x(gk')}{ghk'} \frac{T}{2\pi} \frac{1}{h^{\alpha}{k'}^{-\alpha}}\overline{\frac{\zeta(1+\overline{\alpha}+\gamma)}{\zeta(1+\overline{\alpha})}\frac{d}{d\gamma}\sum_{dk=k'}\frac{k\mu(k)}{\phi(k)}G_1(1,k,d,\alpha,\gamma)\Big|_{\gamma=0}}.
\end{align*}
We have 
\begin{align*}
\sum_{dk=k'}&\frac{k\mu(k)}{\phi(k)}G_1(1,k,d,\alpha,\gamma)
\\&=\prod_{p\mid k'}\left(1-\frac{1}{p^{1+\overline{\alpha}}}\right)^{-1}\sum_{dk=k'}\mu(k)\sum_{d_1d_2d_3=d}\mu(d_3)d_3^{-\overline{\alpha}}d_2^{-\overline{\alpha}-\gamma}\left(\prod_{p\mid kd_1}\left(1-\frac{1}{p^{1+\overline{\alpha}+\gamma}}\right)\right)
\\&=\prod_{p\mid k'}\left(1-\frac{1}{p^{1+\overline{\alpha}}}\right)^{-1}\sum_{d_1d_2d_3d_4=k'}\mu(d_4)\mu(d_3)d_3^{-\overline{\alpha}}d_2^{-\overline{\alpha}-\gamma}\prod_{p\mid d_1d_4}\left(1-\frac{1}{p^{1+\overline{\alpha}+\gamma}}\right)
\\&=\prod_{p\mid k'}\left(1-\frac{1}{p^{1+\overline{\alpha}}}\right)^{-1}\sum_{\ell d_2d_3=k'}\mu(d_3)d_3^{-\overline{\alpha}}d_2^{-\overline{\alpha}-\gamma}\left(\prod_{p\mid \ell}\left(1-\frac{1}{p^{1+\overline{\alpha}}+\gamma}\right)\right)\sum_{d\mid \ell}\mu(d)
\\&=\prod_{p\mid k'}\left(1-\frac{1}{p^{1+\overline{\alpha}}}\right)^{-1}\sum_{ d_2d_3=k'}\mu(d_3)d_3^{-\overline{\alpha}}d_2^{-\overline{\alpha}-\gamma}
\\&=\frac{1}{{k'}^{\overline{\alpha}+\gamma}}\prod_{p\mid k'}\frac{1-p^{\gamma}}{1-p^{-1-\overline{\alpha}}}.
\end{align*}
Recall that 
\begin{align*}
	Z_{\alpha,\gamma,h,k}&=\frac{1}{h^{\alpha}k^{\gamma} }
	\frac{\zeta(1+\alpha+\gamma)}{\zeta(1+\alpha)}
	\prod_{p \mid k}  
	\frac{    1-p^{\gamma} }{ 1 - p^{-1-\alpha}}.
\end{align*}
Thus the contribution of $R_{s=1}$ to $\mathcal{M}$ is 
\begin{align*}
		\sum_{g\leqslant N}\sum_{\substack{h,k\leqslant N/g\\(h,k)=1}}\frac{y(gh)x(gk)}{ghk}\overline{\frac{d}{d\gamma}\frac{1}{2\pi}\int_{0}^{T} Z_{\overline{\alpha},\gamma,h,k}\, dt \Big|_{\gamma=0}    }.
\end{align*}
Now we consider the contribution of the second residue. We have 
	\begin{align*}
	R_{s=1-\overline{\alpha}-\gamma}=\frac{\zeta(1-\overline{\alpha}-\gamma)}{\zeta(1-\gamma)}G_1(1-\overline{\alpha}-\gamma,k,d,\alpha,\gamma)\frac{\left(\frac{kT}{2\pi h }\right)^{1-\overline{\alpha}-\gamma}}{1-\overline{\alpha}-\gamma}.
\end{align*}
By using the second formulation of $G(1-\overline{\alpha}-\gamma, k,d,\alpha,\gamma)$ in (\ref{another form of G for the second residue}), one can show that 
\begin{align*}
	\frac{\zeta(1-\overline{\alpha}-\gamma)}{h^{-\gamma}\zeta(1-\gamma)}
	{k'}^{\overline{\alpha}}\sum_{dk=k'}\frac{k^{1-\overline{\alpha}-\gamma}\mu(k)}{\phi(k)}G_1(1-\overline{\alpha}-\gamma, k,d,\alpha,\gamma)  =Z_{-\gamma,-\overline{\alpha},h,k'}.
\end{align*}
Since $\frac{1}{1-\overline{\alpha}-\gamma}\left(\frac{T}{2\pi}\right)^{1-\overline{\alpha}-\gamma}=\frac{1}{2\pi}\int_{0}^{T}\left(\frac{t}{2\pi}\right)^{-\overline{\alpha}-\gamma}\, dt$,  the contribution of $R_{s=1-\overline{\alpha}-\gamma}$ to $\mathcal{M}$ is 
\begin{align*}
	&\sum_{g\leqslant N}\sum_{\substack{h,k\leqslant N/g\\(h,k)=1}}\frac{y(gh)x(gk)}{ghk}
\overline{ \frac{d}{d\gamma }\frac{1}{2\pi}
 \int_{0}^{T}\left(\frac{t}{2\pi}\right)^{-\overline{\alpha}-\gamma} Z_{-\gamma,-\overline{\alpha},h,k}  \, dt \Big|_{\gamma=0}  }
\end{align*}
and this finishes the proof of Theorem \ref{Mthm}.
\qed

\section{Proof of Theorem \ref{ErrorUnconditional}. Bounding the error term $\mathcal{E} $ unconditionally} \label{sectionEuncond}
The proof of Theorem \ref{ErrorUnconditional} is very similar to that of \cite[Propositions 10 and 11]{HLZ} and thus we skip the details. 
As in \cite[Section 4.4]{HLZ}, we start with  
\begin{equation}
	\label{E1}
	\mathcal{E}_1(\gamma):=\sum_{2\leq q\leq \eta}  \,\,\chiqq \tau(\overline{\psi}) \sum_{k\leqslant N/q}\frac{b(qk)}{qk}\sum_{\substack{d\mid qk}} 
	\delta(q,qk,d,\psi) \sum_{m\leqslant qkT/2\pi d}a(dm)  	
	\psi \left(m\right) 
\end{equation}
and
\begin{equation}
	\label{E2}
	\mathcal{E}_2(\gamma):=\sum_{\eta\leq q\leq N}  \,\,\chiqq \tau(\overline{\psi}) \sum_{k\leqslant N/q}\frac{b(qk)}{qk}\sum_{\substack{d\mid qk}} 
	\delta(q,qk,d,\psi) \sum_{m\leqslant qkT/2\pi d}a(dm)  	
	\psi \left(m\right)
\end{equation}
so that $\mathcal{E}(\gamma)=\mathcal{E}_1(\gamma)+\mathcal{E}_2(\gamma)$ where $2\leqslant \eta\leqslant N$ is a parameter. First, we consider $\mathcal{E}_1(\gamma)$.
 \begin{lemma}
    For $\sigma\geqslant 1+\frac{1}{\log T}$, we have 
    \begin{align}\label{DSidentity}
\nonumber \sum_{m=1}^{\infty}\frac{a(dm) \psi (m)}{m^s}&=\sum_{d_1d_2d_3d_4=d}\sum_{(m_1,d_1d_2d_3)=1} \frac{\mu(m_1d_4)\psi(m_1)}{m_1^s(m_1d_4)^{\overline{\alpha}}}\sum_{(m_2,d_1d_2)=1} \frac{\psi(m_2)}{m_2^s(m_2d_3)^{\overline{\alpha}+\gamma}}
\nonumber\\&\nonumber\quad \quad \times\sum_{(m_3,d_1)=1} \frac{\psi(m_3)}{m_3^s}
\sum_{m_4\leq N} \frac{\psi(m_4)\overline{y(m_4d_1)}}{m_4^s(m_4d_1)^{\overline{\alpha}}}
\\&=  \frac{ L(s+\overline{\alpha}+\gamma,\psi)L(s, \psi) }{ L(s+\overline{\alpha}, \psi)}G_2(s,d,\alpha,\gamma)
\end{align}
where
\begin{align}
    \nonumber G_2(s,d,\alpha,\gamma,\psi)
    &:= j_{\psi}(s+\overline{\alpha}, d)^{-1} \sum_{d_1d_2d_3d_4=d} \mu(d_4)d_4^{-\overline{\alpha}}
d_3^{-\overline{\alpha}-\gamma}d_1^{-\overline{\alpha}}
j_{\psi}(s+\overline{\alpha}+\gamma,d_1 d_2) j_{\psi}(s,d_1)
\\&\quad \quad \quad \quad \quad \quad\quad \quad\quad \quad \quad \quad\times A_{\psi}(s+\overline{\alpha},d_1)
\end{align}
with
\begin{align*}
    j_{\psi}(z,n):=\prod_{p\mid n}\left(1-\frac{\psi(p)}{p^z}\right)
\end{align*}
and
\begin{align*}
    A_{\psi}(z,n):=\sum_{mn\leqslant N}\frac{\psi(m)\overline{y(mn)}}{m^z}.
\end{align*}
\end{lemma}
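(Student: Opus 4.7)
The strategy is to recognize $a(m)$ as a fourfold Dirichlet convolution and apply Lemma~\ref{convolutiondecomposition}. Set
\[
  f_1(n) = \mu(n)n^{-\overline{\alpha}}, \quad f_2(n) = n^{-\overline{\alpha}-\gamma}, \quad f_3(n) = 1, \quad f_4(n) = \overline{y(n)}\,n^{-\overline{\alpha}}\mathbf{1}_{n \le N},
\]
so that $a(m) = (f_1 * f_2 * f_3 * f_4)(m)$ by \eqref{definition of a as convolution}. Applying the second identity in Lemma~\ref{convolutiondecomposition} with $k = 1$, $j = 4$, and $D = d$, and using the fact that $\psi$ is completely multiplicative so that $\psi(m) = \psi(m_1)\psi(m_2)\psi(m_3)\psi(m_4)$ separates across the convolution, yields the first displayed expression in the lemma. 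The indices align with the formula $D_i = \prod_{u=1}^{4-i} d_u$, giving the coprimality conditions $(m_1, d_1 d_2 d_3) = 1$, $(m_2, d_1 d_2) = 1$, $(m_3, d_1) = 1$, and a vacuous condition on $m_4$, together with the support constraint $m_4 d_1 \le N$ inherited from $f_4$.

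Next I would evaluate each of the four inner Dirichlet series in closed form. The $m_2$ and $m_3$ sums are straightforward Euler-product restrictions: pulling out the powers of $d_3$ and using $\psi$ completely multiplicative gives
\[
  \sum_{(m_2,d_1 d_2)=1} \frac{\psi(m_2)}{m_2^{s+\overline{\alpha}+\gamma}} = L(s+\overline{\alpha}+\gamma,\psi)\,j_{\psi}(s+\overline{\alpha}+\gamma, d_1 d_2), \qquad \sum_{(m_3,d_1)=1}\frac{\psi(m_3)}{m_3^s} = L(s,\psi)\,j_{\psi}(s, d_1),
\]
while the $m_4$ sum reproduces the finite object $d_1^{-\overline{\alpha}} A_{\psi}(s+\overline{\alpha}, d_1)$ after the substitution $m = m_4$, $n = d_1$ in the definition of $A_\psi$. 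The delicate sum is the one over $m_1$: here the factor $\mu(m_1 d_4)$ vanishes unless $d_4$ is squarefree and $(m_1, d_4) = 1$, in which case $\mu(m_1 d_4) = \mu(m_1)\mu(d_4)$. Combined with $(m_1, d_1 d_2 d_3) = 1$, this promotes the coprimality to $(m_1, d) = 1$ (where $d = d_1 d_2 d_3 d_4$), giving
\[
  \sum_{(m_1,d_1 d_2 d_3)=1}\frac{\mu(m_1 d_4)\psi(m_1)}{m_1^s (m_1 d_4)^{\overline{\alpha}}} = \frac{\mu(d_4)\,d_4^{-\overline{\alpha}}}{L(s+\overline{\alpha},\psi)\,j_{\psi}(s+\overline{\alpha}, d)},
\]
where I used that $j_{\psi}(\cdot,n)$ depends only on the radical of $n$ to replace $j_\psi(\cdot, d_1d_2d_3d_4)$ by $j_\psi(\cdot, d)$.

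Multiplying the four evaluations and pulling the three $L$-functions and the factor $j_{\psi}(s+\overline{\alpha}, d)^{-1}$ outside the sum over $d_1 d_2 d_3 d_4 = d$ produces exactly the claimed formula with $G_2$ as defined. The only real subtlety is the $\mu(m_1 d_4)$ manipulation described above; all remaining steps are routine Euler-product bookkeeping, and absolute convergence of every series for $\sigma \ge 1 + 1/\log T$ (together with $|\alpha|, |\gamma| \le 1/(15 \log T)$) justifies the interchange of summations throughout.
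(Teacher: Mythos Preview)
Your proof is correct and follows essentially the same approach as the paper: recognize $a=f_1*f_2*f_3*f_4$ with the same four functions, apply Lemma~\ref{convolutiondecomposition} together with the complete multiplicativity of $\psi$ to obtain the first displayed expression, and then evaluate the four inner sums as twisted Euler products to extract the $L$-functions and the factor $G_2$. You have simply made explicit the Euler-product bookkeeping (in particular the handling of $\mu(m_1d_4)$ forcing $(m_1,d)=1$) that the paper leaves implicit.
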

\begin{proof}
This identity is analogous to what is obtained in the proof of \cite[Lemma 12]{HLZ}.    By the definition of $a(\cdot)$ in (\ref{definition of a as convolution}), we have $a(n)=(f_1\ast f_2\ast f_3\ast f_4)(n) $ where $f_1(n)=\mu(n)n^{-\overline{\alpha}}, \, f_2(n)=n^{-\overline{\alpha}-\gamma}, f_3(n)=1,\, f_4(n)=\overline{y(n)}n^{-\alpha}$ for all $n\in\mathbb{N}$. By total multiplicativity of $\psi$ and Lemma \ref{convolutiondecomposition}, we have the desired result. 
\end{proof}
\begin{prop}
    Under the submultiplicativity assumptions (\ref{submultiplicative}) and the bounds (\ref{xyinftyunc}) for $x(n),y(n)$, if  $\eta\ll (\log T)^A$, then
    there exists a constant $C>0$ such that
\begin{align}
    \mathcal{E}_1 \ll T \exp(-C \sqrt{\log T}) \eta^{\frac{3}{2} +\varepsilon}.
\end{align}

\end{prop}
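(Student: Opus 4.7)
The plan is to handle each pair $(q,\psi)$ in \eqref{E1} by applying Perron's formula (Lemma \ref{perron}) to the inner sum $\sum_{m\leq X} a(dm)\psi(m)$ with $X = qkT/(2\pi d)$, using the Dirichlet series identity \eqref{DSidentity}. The key point is that, since $q \geq 2$ and $\psi$ is primitive, $\psi$ is non-principal, so $L(s,\psi)$ is entire. In particular, unlike the contour integral that produced $\mathcal{M}$ in Section \ref{sectionM}, the integrand has no poles to be collected, and the entire quantity $\Sigma(d,\psi,q,k) := \sum_{m\leq X} a(dm)\psi(m)$ will be treated as an error term.

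The next step is to shift the contour from $\Re s = \kappa$ to $\Re s = \sigma_0 := 1 - c/\log T$ for a small absolute constant $c > 0$. This is legitimate provided $L(s+\overline{\alpha},\psi)$ has no zero in the rectangle $\sigma_0 \leq \Re s \leq \kappa$, $|\Im s| \leq U$. Because $q \leq \eta \ll (\log T)^A$, the Siegel--Walfisz zero-free region (with an ineffective constant) supplies such an exclusion, after possibly dropping one exceptional real character whose contribution is absorbed by a Siegel-type bound. On the shifted line one has $L(\sigma_0+it,\psi) \ll (q(|t|+1))^\varepsilon$, $1/L(\sigma_0+\overline{\alpha}+it,\psi) \ll \log(qT)$, and $G_2(s,d,\alpha,\gamma,\psi) \ll \tau_5(d)\, T^\varepsilon$ uniformly. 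Choosing $U = \exp(C_1\sqrt{\log T})$ and $H = X^{1-\varepsilon}$ exactly as in Section \ref{sectionM} bounds both the horizontal integrals and Perron's error terms, yielding
\[
|\Sigma(d,\psi,q,k)| \ll X \exp(-C\sqrt{\log T})\,\tau_5(d)\,(qT)^\varepsilon
\]
for some $C > 0$.

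Finally, I would substitute this back into \eqref{E1}. Using $|\tau(\overline{\psi})| \leq \sqrt{q}$, the crude bound $|\delta(q,qk,d,\psi)| \ll \tau(qk)\,T^\varepsilon$ obtained from \eqref{eq:delta} together with $1/\phi(n)\ll n^\varepsilon/n$, the submultiplicativity \eqref{submultiplicative} of $x(n)$, and the fact that there are at most $\phi(q)\leq q$ primitive characters modulo $q$, one arrives at
\begin{align*}
\mathcal{E}_1 \ll T\exp(-C\sqrt{\log T})\,T^\varepsilon \sum_{2\leq q\leq \eta} q^{3/2} \sum_{k\leq N/q}\frac{|x(qk)|}{qk}\sum_{d\mid qk}\tau_5(d).
\end{align*}
The inner double sum is absorbed into $T^\varepsilon$ by submultiplicativity and $\tau_5(n)\ll n^\varepsilon$, and the $q$-sum then produces the claimed factor $\eta^{3/2+\varepsilon}$.

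The main obstacle will be the potential Siegel zero: the contour shift as described relies on an ineffective Siegel--Walfisz constant to push past $\Re s = 1$ uniformly in $q\leq \eta$. An effective version would require either invoking an explicit Siegel bound or isolating the exceptional character and estimating its contribution separately, in the style of \cite[\S 4.4]{HLZ}. A secondary technical point is ensuring that the cumulative divisor losses arising from $G_2$, $\delta$, and the sum over $d \mid qk$ genuinely collapse into $T^\varepsilon$, rather than producing a power of $\log T$ large enough to disturb the balance between the shrinking factor $\exp(-C\sqrt{\log T})$ and $\eta^{3/2}$ when $\eta$ is near the upper end of its allowed range.
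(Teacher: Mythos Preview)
Your overall strategy is the paper's: apply Perron's formula to $\sum_{m\le X}a(dm)\psi(m)$ via the identity \eqref{DSidentity}, shift the contour into the zero-free region for $L(s,\psi)$ (no residues since $\psi$ is primitive and $q\ge 2$), then sum over $q,\psi,k,d$. But there is a genuine gap in the contour shift as you have written it.

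With your stated $\sigma_0=1-c/\log T$, the vertical integral contributes $X^{\sigma_0}=X\cdot X^{-c/\log T}$; since $X\le NT\ll T^{1+\vartheta}$ one has $X^{-c/\log T}=\exp(-c\log X/\log T)\ge e^{-c(1+\vartheta)}$, a mere constant, so no $\exp(-C\sqrt{\log T})$ saving appears and your claimed bound on $\Sigma(d,\psi,q,k)$ does not follow. The paper (following \cite[Lemma 12]{HLZ}) instead shifts to $\sigma_0=1-c/\log(qU)$. Because $q\le\eta\ll(\log T)^A$ and $U=\exp(C_1\sqrt{\log T})$, one has $\log(qU)\asymp\sqrt{\log T}$, whence $X^{\sigma_0}=X\exp\bigl(-c\log X/\log(qU)\bigr)\ll X\exp(-C\sqrt{\log T})$. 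This deeper shift is what the classical zero-free region for $L(s,\psi)$ actually allows, and no Siegel--Walfisz input is needed beyond the standard treatment of a possible exceptional real zero already handled in \cite[\S4.4]{HLZ}.

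There is a second, smaller inconsistency. Your stated bound $|\delta(q,qk,d,\psi)|\ll\tau(qk)T^\varepsilon$ is true but too crude to produce your displayed final expression: after the factor $qk$ coming from $X=qkT/(2\pi d)$ cancels the $1/(qk)$ in \eqref{E1}, one is left with $\sum_k|x(qk)|\sum_{d\mid qk}|\delta|\tau_5(d)/d$, and with only $|\delta|\ll T^\varepsilon$ the $k$-sum costs an extra factor of $N$. What your display implicitly uses is the stronger $|\delta|\ll T^\varepsilon d/(qk)$, which indeed follows from \eqref{eq:delta} together with $1/\phi(n)\ll n^{\varepsilon-1}$ and $\sum_{e\mid(d,k)}e\ll dT^\varepsilon$. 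The paper makes this explicit by invoking \eqref{bound_delta}, namely $|\delta(q,kq,d,\psi)|\ll(d,k)\log\log T/(\phi(k)\phi(q))$, together with \cite[Lemma~6.7]{Ng} to control $\sum_{d\mid kq}(d,k)h(d)/d$.
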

\begin{proof}
The estimate for partial sums of $a(dm)\psi(m)$  with $m\leqslant qkT/2\pi d$ is similar to the proof of \cite[Lemma 12]{HLZ} as  an application of Perron's formula, Lemma \ref{perron}. One only needs to be careful about the implied constants in the shifts $\alpha, \gamma \ll 1/\log T$ and the constant in $1-c/\log(qU)$ to make sure that the line of integration on the left remains inside the classical zero free region for the Dirichlet $L$-functions being considered as we performed in (\ref{J_integral}). As for the other essential ingredients in the proof, we use \cite[Lemma 6.6]{Ng} that for $\psi$ primitive and $kq\ll T$, we have 
	\begin{align}\label{bound_delta}
	|\delta(q,kq,d,\psi)|\ll \frac{(d,k)\log\log T}{\phi(k)\phi(q)},
	\end{align}
 and \cite[Lemma 6.7]{Ng} that for positive multiplicative functions $h$, we  have 
	\begin{align*}
\sum_{d\mid kq}\frac{(d,k)h(d)}{d}\ll (1\ast h)(k)||h(n)/n||_1.
\end{align*}
Thus we obtain the same bounds in  \cite[Lemma 12 and Proposition 10]{HLZ} that 
\begin{align*}
     \mathcal{E}_1(\gamma) \ll T \exp(-C \sqrt{\log T}) \eta^{\frac{3}{2} +\varepsilon}
\end{align*}
and the desired result follows by the Cauchy Integral Formula.
\end{proof}

The estimate above already gives the desired result in Theorem \ref{ErrorUnconditional} if $N$ is bounded by a power of $\log T$. If $N$ grows faster than any power of $\log T$, then we need to consider the contribution of large moduli in $\mathcal{E}_2(\gamma)$.

\begin{prop}
If $\eta\gg \log^A T$, then 
    \begin{align*}
    \mathcal{E}_2\ll T(\log T)^{-A}.
\end{align*}
\end{prop}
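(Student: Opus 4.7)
The plan is to follow the template established in the analogous Proposition 11 of \cite{HLZ}, with the modifications necessary due to the more general coefficients $x,y$ and the shifted parameter $\gamma$. The first step is to pass from $\mathcal{E}_2$ to $\mathcal{E}_2(\gamma)$ via the Cauchy integral formula on a small circle of radius $\asymp 1/\log T$ around $\gamma = 0$; the uniformity in $\gamma$ inside the disk $|\gamma| \leq 1/(15 \log T)$ means that any bound of the form $\mathcal{E}_2(\gamma) \ll T(\log T)^{-A-1}$ will upgrade to $\mathcal{E}_2 \ll T(\log T)^{-A}$ after absorbing a factor of $\log T$. Hence it suffices to prove the stated bound for $\mathcal{E}_2(\gamma)$ itself, uniformly in $\gamma$.

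For $\mathcal{E}_2(\gamma)$, I would use Heath-Brown's combinatorial identity to decompose the $\mu$ factor that sits inside $a(dm)$ (via $f_1(n)=\mu(n) n^{-\overline{\alpha}}$ in the convolution defining $a$), writing $\mu$ as a bounded linear combination of Dirichlet convolutions of von Mangoldt-like pieces supported in dyadic blocks. Inserting this decomposition into \eqref{E2} and splitting the remaining variables into $O(\log^{O(1)} T)$ dyadic ranges yields, after rearrangement, a bounded number of model sums of the shape
\[
	\Sigma = \sum_{\eta \le q \le N} \frac{\sqrt{q}}{\phi(q)} \sideset{}{^*}\sum_{\psi \bmod q} |\delta(q,qk,d,\psi)| \Bigl| \sum_{m \sim M} \alpha_m \psi(m) \Bigr| \Bigl| \sum_{n \sim N'} \beta_n \psi(n) \Bigr|,
\]
where the coefficients $\alpha_m, \beta_n$ are divisor-bounded and the factor $\sqrt{q}$ comes from $|\tau(\overline{\psi})|=\sqrt{q}$. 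The bound \eqref{bound_delta} on $\delta$ is used here to absorb the $k, d$ dependence just as in the earlier proposition.

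Applying Cauchy--Schwarz in $q,\psi$ and then the multiplicative large sieve inequality separately to each Dirichlet polynomial yields a bound of the form $\Sigma \ll (N + MM')^{1/2}(N + N'N'')^{1/2} (\log T)^{O(1)}$ with polynomial factors in $\log T$ arising from the divisor bounds on $\alpha_m, \beta_n$ and from the number of dyadic pieces. Since $MN' \asymp qkT/d \ll NT$ and $N \ll T^{\vartheta}$ with $\vartheta < 1/2$, summing over the dyadic ranges and the variables $k,d$ gives a total contribution of order $T \cdot \eta^{-1/2} (\log T)^{O(1)}$; the crucial gain is the factor $\eta^{-1/2}$, which comes from the fact that the trivial bound on $\Sigma$ over all $q \le N$ has been improved to one that is saved by a factor of $\sqrt{\eta}$ once we restrict to $q \ge \eta$. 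Choosing $\eta \gg (\log T)^A$ with $A$ large enough then produces the claimed bound $\mathcal{E}_2(\gamma) \ll T(\log T)^{-A}$. The main obstacle is the careful bookkeeping of the Heath-Brown decomposition together with the $\gamma$-differentiation: each derivative in $\gamma$ produces at worst a logarithmic factor, which is harmless because the large sieve gives a polynomial saving in $\eta$, not merely a logarithmic one.
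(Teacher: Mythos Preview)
Your overall plan matches the paper's: reduce to $\mathcal{E}_2(\gamma)$ via Cauchy's formula, apply Heath--Brown's identity to the $\mu$-factor inside $a(dm)$, dyadically decompose, and recover the critical saving $\eta^{-1/2+\varepsilon}$ from a bilinear large-sieve estimate. That is exactly the skeleton of \cite[Proposition~11]{HLZ} and of the paper's argument.

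However, there is a genuine gap. You assert that after the combinatorial decomposition every piece takes the bilinear shape displayed, to which Cauchy--Schwarz and the large sieve apply. That is not true. Heath--Brown's identity produces a $(2J{+}2)$-fold convolution; after dyadic splitting, some configurations have one smooth variable (coming from $f_{2J}(n)=1$ or $f_{2J+1}(n)=n^{-\overline{\alpha}-\gamma}$) occupying a range so long that any bilinear grouping leaves one factor of length $\gg Q^2$. In that regime the large-sieve bound $(Q^2+M)$ is dominated by $M$ and yields no saving over $q$. These Type~I pieces must instead be handled by partial summation together with the P\'olya--Vinogradov inequality $\sum_{m\le x}\psi(m)\ll q^{1/2}\log q$; this is what produces the subsidiary terms $N^{4/3}T^{1/3+\varepsilon}$, $NT^{1/2+\varepsilon}$, $N^{1/3}T^{5/6+\varepsilon}$ in the paper's final bound for $\mathcal{E}_2(\gamma)$. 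They are all $o(T)$ since $N\ll T^{\vartheta}$ with $\vartheta<1/2$, so the $T\eta^{-1/2+\varepsilon}$ term dominates, but your sketch gives no mechanism for controlling them.

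Two smaller points: the paper first performs the change of variable $k=k'k_q$ (with $p\mid k_q\Rightarrow p\mid q$) to decouple the condition $d\mid qk$ from $q$ before estimating; without this you cannot cleanly ``absorb the $k,d$ dependence'' as you claim. Also, your displayed large-sieve bound $(N+MM')^{1/2}(N+N'N'')^{1/2}$ is garbled---the variables $M',N''$ are undefined and the correct shape is $(Q^2+M)^{1/2}(Q^2+N')^{1/2}$ times $\ell^2$-norms of the coefficients, with $Q$ the modulus range.
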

\begin{proof}
     Since the proof of our upper bound for the size of $\mathcal{E}_2(\gamma)$ is very similar to the proof of \cite[Proposition 11]{HLZ} of Heap, Li and Zhao inspired by the work \cite{BHB} of Bui and Heath-Brown, we only give an overview of the ideas being used. To drop the dependency of $d\mid kq$ on $q$, following Heap-Li-Zhao, \cite{HLZ}, we start with the change of variable $k=k'k_q$ where $(k',q)=1$ and $p\mid k_q$ implies $p\mid q$; and $d=d'k_q$ with $(d',k)=1$. Then by using $\delta(q,kq,d,\psi) \ll (\log\log T)^2 dk^{-1}q^{-1}$, $\left|\tau(\psi)\right|=q^{1/2}$, and the submultiplicativity assumptions in (\ref{submultiplicative}), the problem reduces to
\begin{align*}
    \mathcal{E}_2(\gamma)
	\ll 	(\log\log T)^2\sum_{\eta\leq q\leq N} \frac{|b(q)|}{q^{3/2}} \,\,\chiqq 
	\sum_{\substack{d\leq N/q}}\frac{|b(d)|}{d}\sum_{k\leq N/qd} \frac{|b(k)|}{k^2}
	\left|\sum_{m\leqslant qkT/2\pi }a(dm)  	
	\psi \left(m\right) \right|.
\end{align*}
Then considering dyadic intervals for $k,q$ and $d$; and taking the maximum contribution of such intervals, we see that there exist $K,Q$ and $D$ with  $KQD\ll N$   such that 
\begin{align*}
	\mathcal{E}_2(\gamma)
	\ll  (\log T)^3	(\log\log T)^2  \sum_{q\sim Q} \frac{|b(q)|}{q^{3/2}} 
	\sum_{d\sim D}\frac{|b(d)|}{d}\sum_{k\sim K} \frac{|b(k)|}{k^2}
	\,\,\chiqq 	\left|\sum_{m\leqslant qkT/2\pi }a(dm)  	
	\psi \left(m\right) \right| 
\end{align*}
and by using the bounds in (\ref{xyinftyunc}), we have 
\begin{align*}
	\mathcal{E}_2(\gamma)
	\ll  \frac{(\log T)^{C}}{K Q^{3/2-\varepsilon}} 
	\sum_{d\sim D}\frac{|b(d)|}{d} \sum_{q\sim Q} 
	\,\,\chiqq  \max_{x\leq 2KQT}	\left|\sum_{m\leqslant x}a(dm)  	
	\psi \left(m\right) \right|
\end{align*}
for some constant $C>0$ where the sums over $d\sim D, q\sim Q$ run over $D\leqslant d\leqslant 2D$ and $Q\leqslant q\leqslant 2Q$.  Then by using Heath-Brown's identity for decomposing the M\"{o}bius function, we can decompose $\mu(n)n^{-\alpha}$ and obtain a decomposition of $a(dm)$ into $2J+2$ functions $f_i$ where $J\in\mathbb{N}$ sufficiently large and $ f_{2J} (n) = 1,\, f_{2J+1}(n)=n^{-\overline{\alpha}-\gamma},\, f_{2J+2} = \overline{y(n)}n^{-\overline{\alpha}}$. By Lemma \ref{convolutiondecomposition}, we obtain a further decomposition in the form
$$a(md)= (f_1\ast\dots\ast f_{2J+2})(md)=\sum_{d_1\dots d_{2J+2}=d}  (g_1\ast\dots\ast g_{2J+2})(m)$$
with 
\begin{align*}
	g_i(m)=\begin{cases}
		f_i(md_i) \,\,\,&\text{if}\,(m,D_i)=1\,\text{where}\,D_i=d_1\dots d_{i-1},\\
		0\,\,\,\,\,&\text{otherwise}.
	\end{cases}
\end{align*}
Thus the estimates of Type I sums where the main ingredients are partial summation and the P\'{o}lya-Vinogradov inequality and of Type II sums where large sieve inequality is used  arising from such a decomposition of $2J+2$ functions follow along the lines of the proof of \cite[Proposition 11]{HLZ}. Thus we obtain the same upper bound
\begin{align*}
    \mathcal{E}_2(\gamma)\ll N^{4/3}T^{1/3+\varepsilon}+(\log T)^{C'}(N T^{1/2+\varepsilon}+T\eta^{-1/2+\varepsilon}+N^{1/3}T^{5/6+\varepsilon}).
\end{align*}
If $N$ grows faster than any fixed power of $\log T$, then we can take $\eta=\log^A(T)$ and thus $\mathcal{E}_2\ll T/\log^AT$ by the Cauchy Integral Formula.
\end{proof}

  Combining the estimates for $\mathcal{E}_1(\gamma)$ and $\mathcal{E}_2(\gamma)$ and an application of the Cauchy Integral Formula finishes the proof of the bound $\mathcal{E}\ll T/\log^AT$ in the unconditional case.

\section{Proof of Theorem \ref{EGRH}: Bounding $\mathcal{E}$ assuming GRH($\Theta$).} \label{sectionEGRH}
Recall 
\begin{align*}
    \mathcal{E}(\gamma):=\sum_{2\leq q\leq N}  \,\,\chiqq \tau(\overline{\psi}) \sum_{k\leqslant N/q}\frac{b(qk)}{qk}\sum_{\substack{d\mid qk}} 
	\delta(q,qk,d,\psi) \sum_{m\leqslant qkT/2\pi d}a(dm)  	
	\psi \left(m\right) 
\end{align*}
Assume the GRH($\Theta$) conjecture as stated in Conjecture \ref{GRHtheta} for some  $\Theta\in [1/2,1)$.
Let 
\begin{align*}
    x=\frac{qkT}{2\pi d}.
\end{align*}
 Define 
\begin{align*}
   F_2(s):= \frac{ L(s+\overline{\alpha}+\gamma,\psi)L(s, \psi) }{ L(s+\overline{\alpha}, \psi)}G_2(s,d,\alpha,\gamma)\frac{x^s}{s}
\end{align*}
where we recall 
\begin{align}
    \nonumber G_2(s,d,\alpha,\gamma,\psi)
    &= j_{\psi}(s+\overline{\alpha}, d)^{-1} \sum_{d_1d_2d_3d_4=d} \mu(d_4)d_4^{-\overline{\alpha}}
d_3^{-\overline{\alpha}-\gamma}d_1^{-\overline{\alpha}}
j_{\psi}(s+\overline{\alpha}+\gamma,d_1 d_2) j_{\psi}(s,d_1)
\\&\quad \quad \quad \quad \quad \quad\quad \quad\quad \quad \quad \quad\times A_{\psi}(s+\overline{\alpha},d_1)
\end{align}
with
\begin{align*}
    j_{\psi}(z,n)=\prod_{p\mid n}\left(1-\frac{\psi(p)}{p^z}\right)
\end{align*}
and
\begin{align*}
    A_{\psi}(z,n)=\sum_{mn\leqslant N}\frac{\psi(m)\overline{y(mn)}}{m^z}.
\end{align*}
Note that for $\sigma\geqslant 1/2$, we have 
\begin{align*}
    j_{\psi}(s,n)^{-1}=\prod_{p\mid n}\sum_{k=0}^{\infty}\left(\frac{\psi(p)}{p^s}\right)^k\ll \prod_{p\mid n}\left(1+\frac{C}{p^{\sigma}}\right)\ll \tau(n)\ll T^\varepsilon
\end{align*}
for $n\ll N^2\ll T^{2\vartheta}$. Similarly, $j_{\psi}(s,n)\ll T^{\varepsilon}$ for $n\ll N^2$ and $\sigma\geqslant 1/2$. Thus, for $\sigma\geqslant \frac{1}{2}+\varepsilon$, we have 
\begin{align*}
    G_2(s,d,\alpha,\gamma,\psi)&\ll T^{\varepsilon}\sum_{d_1d_2d_3d_4=d}\sum_{md_1\leqslant N}\frac{\left|y(m)y(d_1)\right|}{m^{\sigma}}
    \\&= T^{\varepsilon}\sum_{m\leqslant N}\frac{\left|y(m)\right|}{m^{\sigma}}\sum_{\substack{d_1d_2d_3d_4=d\\ d_1\leqslant N/m}}\left|y(d_1) \right|
    \\&\ll T^{\varepsilon} \left(\sum_{\ell\mid d}\left|y(\ell)\right|\right) \left\|\frac{y(n)}{n^\sigma}\right\|_1
\end{align*}
since $\alpha,\gamma\ll 1/\log T$ and $\tau_3(d)\ll T^{\varepsilon}$.

Let $\kappa=1+\frac{1}{\log T}$, $2\leqslant U\leqslant T^{2} $ and $H\leqslant \left(\frac{x}{N}\right)^{1-\varepsilon}$. By Lemma \ref{perron}, we have 
\begin{align}\label{perron_2}
		\nonumber \sum_{m\leqslant x}a(dm) \psi(m)&=\frac{1}{2\pi i}\int_{\kappa-iU}^{\kappa+iU}	F_2(s)\, ds+O\left(\sum_{x-\frac{x}{H}\leqslant n\leqslant x+\frac{x}{H}}\left|a(dn)\right|\right)
  \\&\quad \quad \quad +O\left( \frac{x^{\kappa}H}{U}\sum_{n=1}^{\infty}\left|a(dn)\right|n^{-\kappa}   \right).
\end{align}
Since $a(dn)\ll(dn)^{4/15\log T} \left(\tau_3\ast \left|y\right|\right) (dn)\ll(dn)^{4/15\log T} \left(\tau_3\ast \left|y\right|\right) (d)  \left(\tau_3\ast \left|y\right|\right) (n) $ and $d\leqslant T$, we have 
\begin{align*}
    \sum_{n=1}^{\infty}\left|a(dn)\right|n^{-\kappa} \ll \sum_{n=1}^{\infty} \frac{(dn)^{4/15\log T} \left(\tau_3\ast \left|y\right|\right) (d)  \left(\tau_3\ast \left|y\right|\right) (n) }{n^{1+\frac{1}{\log T}}}\ll T^\varepsilon \left(\sum_{\ell\mid d}\left|y(\ell)\right|\right)\left\|\frac{y(n)}{n}\right\|_1
\end{align*}
and thus the second error term in (\ref{perron_2}) is bounded by 
\begin{align}\label{second_error}
    T^\varepsilon \frac{xH}{U} \left(\sum_{\ell\mid d}\left|y(\ell)\right|\right)\left\|\frac{y(n)}{n}\right\|_1.
\end{align}
Since $x\leqslant T^2$, the first error term in (\ref{perron_2}) is bounded by 
\begin{align}\label{first_error}
  T^\varepsilon    \left(\sum_{\ell\mid d}\left|y(\ell)\right|\right)\sum_{n\leqslant N}\left|y(n)\right|\sum_{x-\frac{x}{H}\leqslant mn\leqslant x+\frac{x}{H}}\tau_{3}(m)\ll T^\varepsilon\frac{x}{H} \left(\sum_{\ell\mid d}\left|y(\ell)\right|\right)\left\|\frac{y(n)}{n}\right\|_1
\end{align}
by Shiu's divisor sum bound, \cite[Theorem 2]{S}, since $H\leqslant \left(\frac{x}{N}\right)^{1-\varepsilon}$. 

Let $\sigma_2:=\Theta+\varepsilon$ and define 
\begin{align}\label{J_2_integral}
	J_2(\sigma_2):=\frac{1}{2\pi i}\left( \int_{\kappa+iU}^{\sigma_2+iU}+\int_{\sigma_2+iU}^{\sigma_2-iU}+\int_{\sigma_2-iU}^{\kappa+iU}  \right)	\frac{d}{d\gamma }  F_2(s) \Bigr|_{\gamma=0}	\, ds.
\end{align}
and note that 
\begin{align*}
	\frac{d}{d\gamma }  F_2(s)  \Bigr|_{\gamma=0}=\left(\frac{L'}{L}(s+\overline{\alpha},\psi)G_2(s,d,\alpha,0)+ G_2'(s,d,\alpha, 0)\right)L(s,\psi)\frac{x^s}{s}
\end{align*}
where $G_2'(s,d,\alpha, 0)=\frac{d}{d\gamma}G_2(s,d,\alpha, \gamma)|_{\gamma=0}\ll T^{\varepsilon} \left(\sum_{\ell\mid d}\left|y(\ell)\right|\right) \left\|\frac{y(n)}{n^\sigma}\right\|_1$ by the Cauchy Integral Formula.  Since
\begin{align*}
    \frac{L'}{L}(s+\overline{\alpha},\psi),\, L(s,\psi)\ll (qT)^{\varepsilon}\ll T^{2\varepsilon}
\end{align*}
along the path of integration considered in (\ref{J_2_integral}), we have 
\begin{align*}
    J_2(\sigma_2)\ll T^{\varepsilon} \left(\sum_{\ell\mid d}\left|y(\ell)\right|\right) \left\|\frac{y(n)}{n^\Theta}\right\|_1\left(  \frac{x}{U}+x^{\Theta}   \right).
\end{align*}
Thus the total contribution of the error terms are bounded by 
\begin{align}
   \ll T^{\varepsilon} \left(\sum_{\ell\mid d}\left|y(\ell)\right|\right) \left\|\frac{y(n)}{n^\Theta}\right\|_1\left(x^{\Theta}+\frac{xH}{U} +\frac{x}{H}           \right)
\end{align}
uniformly in $\left|\alpha\right|\leqslant 1/15\log T$.
We take $U=x^{2-2\Theta}$ and $H=x^{1-\Theta-\varepsilon}$. Note that $H\leqslant \left(\frac{x}{N} \right)^{1-\varepsilon}$ since $N^{1-\varepsilon}\leqslant T^{\vartheta(1-\varepsilon)} \leqslant \left(\frac{qkT}{2\pi d}\right)^{\Theta}$ for $d\mid qk$. Thus 
\begin{align}\label{Epsilon_bound_1}
\mathcal{E}\ll T^{\Theta+\varepsilon}  \left\|\frac{y(n)}{n^\Theta}\right\|_1\sum_{2\leq q\leq N}  \,\,\chiqq \sqrt{q} \sum_{k\leqslant N/q}\frac{\left|x(qk)\right|}{qk}\sum_{\substack{d\mid qk}}  \left(\sum_{\ell\mid d}\left|y(\ell)\right|\right)
	\frac{(d,k)}{\phi(k)\phi(q)}\left( \frac{qk}{d}\right)^\Theta
\end{align}
by (\ref{bound_delta}).
For $d\mid kq\ll T$, we have $\frac{(d,k)}{\phi(k)\phi(q)}\left( \frac{qk}{d}\right)^\Theta\ll  T^\varepsilon \frac{d}{kq}\left( \frac{qk}{d}\right)^\Theta= T^{\varepsilon} \left(\frac{d}{qk}\right)^{1-\Theta}$. Thus 
\begin{align*}
    \mathcal{E}&\ll T^{\Theta+\varepsilon}  \left\|\frac{y(n)}{n^\Theta}\right\|_1\sum_{2\leqslant q\leqslant N} \sum_{k\leqslant N/q}\frac{q^{3/2}\left|x(qk)\right|}{(qk)^{2-\Theta}}\sum_{d\mid qk}d^{1-\Theta} \sum_{\ell \mid d}\left|y(\ell)\right|
    \\&\ll T^{\Theta+\varepsilon}  \left\|\frac{y(n)}{n^\Theta}\right\|_1\sum_{2\leqslant q\leqslant N}\sum_{k\leqslant N/q}(qk)^{1/2}x(qk)(1\ast \left| y\right|)(qk)
    \\&\ll T^{\Theta+\varepsilon}  \left\|\frac{y(n)}{n^\Theta}\right\|_1\left\|n^{1/2}x(n)(1\ast\left| y\right|)(n)\right\|_1.
\end{align*}
\qed

\section{Proofs of Corollary \ref{corollary} and \ref{corollary_lower_bound}} \label{sectioncorollaries}
In this final section, we establish our corollaries. 
\begin{proof}[Proof of Corollary \ref{corollary}]
Let $m\geqslant 1$ be fixed.	By the Cauchy Integral Formula, we have 
	\begin{align*}
		\sum_{0 < \gamma \leq T} \zeta^{(m)}(\rho)X(\rho) Y(1\!-\! \rho)=\frac{m!}{2\pi i}\int_{\mathcal{C}}\frac{S(\alpha,T,X,Y)}{\alpha^{m+1}}\, d\alpha
	\end{align*}
where $\mathcal{C}$ is a positively oriented circle with radius $\ll \frac{1}{\log T}$ centered at the origin. Note that in (\ref{dmformula}), the term $\mathcal{E}$ is analytic at $\alpha=0$ since all the other terms in (\ref{dmformula}) are analytic therein. The contributions of the derivatives of the terms on the right-hand side of (\ref{dmformula}) are calculated in Lemmata  \ref{lemma_derivative_1}-\ref{lemma_F_for_k>1}. Since $\mathcal{E}$ is analytic at $\alpha=0$ and the upper bounds for the size of $\mathcal{E}$ in Theorem \ref{mainthm} are uniform in $\alpha\ll \frac{1}{\log T}$, the contribution of the term $\mathcal{E}$ to the integral above is  $\ll  m!\log^{m}T\max_{\alpha\in\mathcal{C}}\left|\mathcal{E}\right|$. Since $m$ is fixed and the bounds in (i) and (ii) in Theorem \ref{mainthm} hold for all $A>0$ and for all $\varepsilon>0$, respectively,  we obtain the desired result.

\end{proof}

\begin{proof}[Proof of Corollary  \ref{corollary_lower_bound}]
We follow the approach in \cite{MN2}. 	For $0<\vartheta<\frac{1}{2}$ and sufficiently large $T$, let $N=\xi^k= T^{\vartheta}$. Define 
$\mathcal{C}_{\xi}(s):=\sum_{n\leqslant \xi}\frac{1}{n^s}$, 
\begin{align*}
		\Sigma_1:= \sum_{0 < \gamma < T}\zeta^{(m)}(\rho)\mathcal{C}_{\xi}(\rho)^{k-1}\overline{\mathcal{C}_{\xi}(\rho)}^{k}, \, 
	\text{ and }
	\Sigma_{2}:=\sum_{0 < \gamma < T}\left|\mathcal{C}_{\xi}(\rho)\right|^{2k}.
\end{align*}

By H\"{o}lder's  inequality, 
\begin{align*}
	\left|\Sigma_{1}\right|^{2k}\leqslant \left(\sum_{0 < \gamma < T}\left|\zeta^{(m)}(\rho)\right|^{2k}\right)\Sigma_{2}^{2k-1}
\end{align*}
and thus
$\sum_{0 < \gamma < T}\left|\zeta^{(m)}(\rho)\right|^{2k}\geqslant \frac{\left|\Sigma_{1}\right|^{2k}}{\Sigma_{2}^{2k-1}}$.
First we find a lower bound for $\left|\Sigma_{1}\right|^{2k}$. By the assumption of the Riemann Hypothesis, we have 
\begin{align*}
	\Sigma_1=\sum_{0 < \gamma < T}\zeta^{(m)}(\rho)\mathcal{C}_{\xi}(\rho)^{k-1}\mathcal{C}_{\xi}(1-\rho)^{k}.
\end{align*}
For $k\geqslant 1$, define 
$$\tau_{k}(n,\xi):=\sum_{\substack{d_1d_2\dots d_{k}=n\\d_1,d_2\dots, d_k\leqslant \xi}}1.$$
In Corollary \ref{corollary}, we take 
\begin{align*}
	x(n):=\begin{cases}
		\tau _{k-1}(n,\xi) & \text{ if } k>1,\, n\leqslant \xi^{k-1},
		\\ 1  & \text{ if } k=n=1,
		\\ 0  & \text{ otherwise.}   
	\end{cases}
\text{ and }\,
 	y(n):=\begin{cases}
 		\tau _{k}(n,\xi) & \text{ if } n\leqslant \xi^{k}=N,
 		\\ 0  & \text{ otherwise.}   
 	\end{cases}
 \end{align*}
By Corollary \ref{corollary}, we have 
\begin{align}\label{Sigma_1}
\Sigma_1=\mathcal{T}_1
	+\mathcal{T}_2
\end{align}
where 
\begin{align*}
	\mathcal{T}_1&:=\frac{(-1)^{m+1}}{m+1}\frac{T}{2\pi }
	\sum_{g\leqslant N}\sum_{\substack{h\leqslant N/g}}\frac{y(gh)x(g)}{gh} \left(  \mathcal{P}_{m+1}(\Lo)  - \mathcal{Q}_{m+1}(\log h) \right)
	\\&\quad +(-1)^{m+1}\frac{T}{2\pi}\sum_{n\leqslant N}\frac{\left(\Lambda\ast  \log^m \ast x\right) (n)y(n)}{n} 
	+(-1)^{m+1}\frac{T}{2\pi }\sum_{g\leqslant N}\sum_{\substack{h,r\leqslant N/g\\r\geqslant2\\(h,r)=1}}\frac{y(gh)x(gr)}{ghr}\mathcal{A}_m(h,r),
\end{align*}
\begin{align*}
	\mathcal{T}_2&:=\frac{T}{2\pi }\sum_{g\leqslant N}\sum_{\substack{h,r\leqslant N/g\\r\geqslant2\\(h,r)=1}}\frac{y(gh)x(gr)}{ghr}\mathcal{B}_m(r,T)
	+(-1)^m\frac{T}{2\pi}\log(\tfrac{T}{2\pi e}) 
	\sum_{n\leqslant N}\frac{\left( \log^m * x\right) (n)y(n)}{n}+\mathcal{E}.
\end{align*}
Since $\mathcal{P}_{m+1}(\cdot)$ and $\mathcal{Q}_{m+1}(\cdot)$ are monic polynomials of degree $m+1$, we have 
\begin{align*}
	\mathcal{P}_{m+1}(\Lo)\geqslant (1+o(1))\Lo^{m+1}\geqslant(1+o(1))\log^{m+1}T 
\end{align*}
and  $\mathcal{Q}_{m+1}(\log h)\leqslant (1+o(1))\log^{m+1}h$
for sufficiently large $T$ and $h$. For those $h$ under consideration, we have $h\leqslant N\leqslant T^{\vartheta}$ and thus 
\begin{align*}
		\mathcal{Q}_{m+1}(\log h)\leqslant (1+o(1))\left(\vartheta \log T\right)^{m+1}=(1+o(1))\vartheta^{m+1}(\log T)^{m+1}.
\end{align*}
Thus $\mathcal{P}_{m+1}(\Lo)-	\mathcal{Q}_{m+1}(\log h)\geqslant (1-\vartheta^{m+1}+o(1))(\log T)^{m+1}$ and 
$\mathcal{P}_{m+1}(\Lo)-	\mathcal{Q}_{m+1}(\log h)$ is positive for sufficiently large $T$. It follows that  
\begin{align*}
\left|\mathcal{T}_1\right|\geqslant \frac{1-\vartheta^{m+1}+o(1)}{2\pi (m+1)}T(\log T)^{m+1}\sum_{g\leqslant N}\sum_{\substack{h\leqslant N/g}}\frac{y(gh)x(g)}{gh}	
\end{align*}
by the positivity of $\mathcal{P}_{m+1}(\Lo)-	\mathcal{Q}_{m+1}(\log h)$ and the nonnegativity of $\left(\Lambda\ast  \log^m \ast x\right) (n)y(n)$ and $\mathcal{A}_m(h,r)$ (recall definition  \eqref{definition_mathcal_A}) whose contributions are dropped. 

Now, we find an upper bound for $\left|\mathcal{T}_2\right|$. 
Define 
\begin{align*}
	\mathcal{T}_{2,1}:=\frac{T}{2\pi }\sum_{g\leqslant N}\sum_{\substack{h,r\leqslant N/g\\r\geqslant2\\(h,r)=1}}\frac{y(gh)x(gr)}{ghr}\mathcal{B}_m(r,T), \, 
	\mathcal{T}_{2,2}:=(-1)^m\frac{T}{2\pi}\log\left(\tfrac{T}{2\pi e}\right) 
	\sum_{n\leqslant N}\frac{\left( \log^m * x\right) (n)y(n)}{n}
\end{align*}
so that $\mathcal{T}_{2}=\mathcal{T}_{2,1}+\mathcal{T}_{2,2}+\mathcal{E}$.
Recall that, for $r>1$, $\mathcal{B}_m(r,T)=0$ if $\omega(r) \ge m+2$ and if $1 \le \omega(r) \le m+1$
\begin{align*}
	\mathcal{B}_m(r,T)=m!\frac{r(-1)^{\omega(r)+1}}{\varphi(r)}
	\sum_{\substack{u_1+u_2\leqslant m\\u_1\geqslant \omega(r)-1\\u_2\geqslant 0}} \mathcal{G}_{u_1+1}(r)\frac{(-1)^{u_2}}{u_2!}\log^{u_2}\left(\frac{T}{2\pi r}\right)\sum_{j=-1}^{m-u_1-u_2-1}\tilde{\gamma}_{j}.
\end{align*}
where  we recall $\mathcal{G}_u(r)$ is defined by \eqref{definition_mathcal_G}.
From the definition \eqref{definition_mathcal_G}, 
 it follows that 
for $1\leqslant\omega(r)\leqslant m+1$ and $\omega(r)-1\leqslant u_1\leqslant m$, we have  
\begin{align*}
	\left|	\mathcal{G}_{u_1+1}(r)\right|&\leqslant \sum_{\substack{\ell_1+\ell_2+\dots+\ell_{\omega(r)}=u_1+1\\\ell_1,\dots, \ell_{\omega(r)}\geqslant 0}}\left(\frac{\log^{\ell_1}(p_1)}{\ell_1!}\frac{\log^{\ell_2}(p_2)}{\ell_2!}\dots\frac{\log^{\ell_{\omega(r)}}(p_{{\omega(r)}})}{\ell_{\omega(r)}!}\right)
	\\&= \frac{1}{(u_1+1)!}\left(\sum_{p\mid r}\log p\right)^{u_1+1}
	\leqslant \frac{\log^{u_1+1}\tilde{r}}{(u_1+1)!}
\end{align*}
where $\tilde{r}=\prod_{j=1}^{\omega(k)}p_j$ is the square-free part of $r$. Since $1\leqslant \omega(r)\leqslant m+1$ for those $r$ under consideration, we have 
$\frac{r}{\varphi(r)}\leqslant \prod_{j=1}^{m+1}(1-\frac{1}{q_j})^{-1}$
where 
$q_j$ denotes the $j^{\text{th}}$ prime number. Thus 
\begin{align*}
	\left|\mathcal{B}_m(r,T)\right|\leqslant C_m	\mathbbm{1}_{1\leqslant \omega(r)\leqslant m+1}\sum_{\substack{u_1+u_2\leqslant m\\u_1\geqslant \omega(r)-1\\u_2\geqslant 0}}\frac{\log^{u_1+1}\tilde{r}}{(u_1+1)!}\frac{\log^{u_2}T}{u_2!}
\end{align*}
for some constant $C_m$ depending only on $m$. By \cite[Lemma 3.1]{MN2}, we have  $x(gr)\leqslant x(g)x(r)$ and thus we have 
\begin{align*}
\mathcal{T}_{2,1} \ll T\sum_{g\leqslant N}\sum_{\substack{h\leqslant N/g}}\frac{y(gh)x(g)}{gh}			\sum_{\substack{u_1+u_2\leqslant m\\u_1,u_2\geqslant 0}}\frac{1}{(u_1+1)!}\frac{\log^{u_2}T}{u_2!}\sum_{\substack{2\leqslant r \leqslant \frac{N}{gh}\\(h,r)=1\\1\leqslant \omega(r)\leqslant u_1+1}}\frac{x(r)}{r}\log^{u_1+1}\tilde{r}
\end{align*}
where the implied constant depends on $m$. If $k=1$, then the inner sum above is void since in this case $x(1)=1$ and $x(r)=0$ for $r>1$. Thus we may assume that $k\geqslant 2$. We have
\begin{align*}
\sum_{\substack{2\leqslant r \leqslant \frac{N}{gh}\\(h,r)=1\\1\leqslant \omega(r)\leqslant u_1+1}}\frac{x(r)}{r}\log^{u_1+1}\tilde{r}&\leqslant \sum_{j=1}^{u_1+1}\sum_{\substack{p_1,p_2\dots, p_j\\p_1^{a_1}\dots p_j^{a_j}\leqslant N}}\frac{\tau_{k-1}\left(p_1^{a_1}\dots p_j^{a_j}\right)}{p_1^{a_1}\dots p_j^{a_j}}\left(\log p_1+\log p_2+\dots +\log p_j\right)^{u_1+1}	
\end{align*}  
where the sum over $p_1,\dots, p_j$ runs over distinct prime numbers. By Lemma \ref{lemma_for__moment_corollary}, the right-hand side above is 
$\ll \log^{u_1+1} N$
where the implied constant depends on $k$ and $u_1$. Thus 
\begin{align*}
\mathcal{T}_{2,1}&\ll T\sum_{g\leqslant N}\sum_{\substack{h\leqslant N/g}}\frac{y(gh)x(g)}{gh}			\sum_{\substack{u_1+u_2\leqslant m\\u_1,u_2\geqslant 0}}\frac{\log^{u_1+1} N}{(u_1+1)!}\frac{\log^{u_2}T}{u_2!}
\\&\ll	\left(\sum_{\substack{u_1+u_2\leqslant m\\u_1,u_2\geqslant 0}}\frac{\vartheta^{u_1+1}}{(u_1+1)!}\frac{1}{u_2!}\right)T\left(\log T\right)^{m+1} \sum_{g\leqslant N}\sum_{\substack{h\leqslant N/g}}\frac{y(gh)x(g)}{gh}	
\end{align*}
where the implied constant depends  on $k$ and $m$, but not on $\vartheta$. Now, we consider $\mathcal{T}_{2,2}$. We have 
\begin{align*}
		& \sum_{n\leqslant N}\frac{\left( \log^m *x\right) (n)y(n)}{n}=\sum_{n\leqslant N}\frac{y(n)}{n}\sum_{g\mid n}\log^m\left(n/g\right)x(g)
		\\&\leqslant \log^mN \sum_{n\leqslant N}\frac{y(n)}{n}\sum_{g\mid n}x(g)
		\leqslant \vartheta^m\left(\log T\right)^m\sum_{g\leqslant N}\sum_{h\leqslant N/g}\frac{y(gh)x(g)}{gh}.
\end{align*}
Thus
$
	\mathcal{T}_{2,2}\ll \vartheta^mT\left(\log T\right)^{m+1}\sum_{g\leqslant N}\sum_{h\leqslant N/g}\frac{y(gh)x(g)}{gh}
$
where the implied constant is absolute. By using the estimates for $\mathcal{T}_1$ and $\mathcal{T}_2$ and the fact that $\mathcal{E}=o(T)$, we have 
\begin{align*}
\left|\Sigma_1\right|&\geqslant \left(  \frac{1-\vartheta^{m+1}+o(1)}{2\pi (m+1)} -C_{k,m}\sum_{\substack{u_1+u_2\leqslant m\\u_1,u_2\geqslant 0}}\frac{\vartheta^{u_1+1}}{(u_1+1)!}\frac{1}{u_2!}-\vartheta^m -o(1)   \right)
\\&\quad \quad \quad \times	T\left(\log T\right)^{m+1}\sum_{g\leqslant N}\sum_{h\leqslant N/g}\frac{y(gh)x(g)}{gh}
\end{align*}
where $C_{k,m}$ is a positive constant depending only on $k$ and $m$. By choosing $\vartheta$ sufficiently small, the expression inside the brackets above is $\geqslant 1/(4\pi (m+1))$ and thus 
\begin{align*}
	\Sigma_1\gg T\left(\log T\right)^{m+1}\sum_{g\leqslant N}\sum_{h\leqslant N/g}\frac{y(gh)x(g)}{gh}
	\gg (\log T)^{k^2},
\end{align*}
where the last bound was established in  \cite[Page 24]{MN2}.
In \cite[Section 4]{MN2}, it is proved that 
$\Sigma_2\ll T\left(\log T\right)^{k^2+1}$ and thus we deduce
\begin{align*}
	\sum_{0 < \gamma < T}\left|\zeta^{(m)}(\rho)\right|^{2k}\geqslant \frac{\left|\Sigma_{1}\right|^{2k}}{\Sigma_{2}^{2k-1}}\gg \frac{T^{2k}\left(\log T\right)^{2k\left(k^2+m+1\right)}}{T^{2k-1}\left(\log T\right)^{(2k-1)\left(k^2+1\right)}}=T(\log T)^{k^2+2km+1}.
\end{align*}
\end{proof}

\end{document}